\documentclass[reqno]{amsart}
\usepackage{amssymb}
\usepackage{graphicx}
\usepackage{amsmath}

\usepackage{yfonts}
\usepackage{pbsi}

\usepackage[usenames, dvipsnames]{color}
\usepackage{verbatim}
\usepackage{mathrsfs}
\usepackage{bm}
\usepackage{cite}

\usepackage{mathtools}
\newcommand{\pvint}{\mathop{\mathrlap{\pushpv}}\!\int}
\newcommand{\pushpv}{\mathchoice
	{\mkern5mu\rule[.6ex]{.5em}{1pt}}
	{\mkern2.8mu\rule[.5ex]{.35em}{.8pt}}
	{\mkern2.5mu\rule[.29ex]{.3em}{.7pt}}
	{\mkern2mu\rule[.2ex]{.2em}{.5pt}}
}

\numberwithin{equation}{section}

\newtheorem{theorem}{Theorem}[section]
\newtheorem{corollary}[theorem]{Corollary}
\newtheorem{lemma}[theorem]{Lemma}
\newtheorem{prop}[theorem]{Proposition}

\theoremstyle{definition}

\theoremstyle{definition}
\newtheorem{remark}[theorem]{Remark}

\theoremstyle{definition}

\theoremstyle{definition}

\makeatletter
\def\dashint{\operatorname%
{\,\,\text{\bf-}\kern-.98em\DOTSI\intop\ilimits@\!\!}}
\makeatother

\def\\det{\text{det}}

\def\.5{\frac{1}{2}}

\newcommand{\laplas}{\ensuremath{\Delta}}

\newcommand{\va}{\ensuremath{\varepsilon}}
\newcommand{\ptl}{\ensuremath{\partial}}
\newcommand{\om}{\ensuremath{\Omega}}

\newcommand{\R}{\ensuremath{\mathbb{R}}}

\newcommand{\lam}{\ensuremath{\lambda}}
\newcommand{\alp}{\ensuremath{\alpha}}
\newcommand{\dt}{\ensuremath{\delta}}

\newcommand{\RN}[1]{%
  \textup{\uppercase\expandafter{\romannumeral#1}}%
}

\newcommand{\Div}{\operatorname{div}}
\newcommand{\dist}{\text{dist}}

\renewcommand{\epsilon}{\varepsilon}

\newcounter{marnote}

\begin{document}


\title[Estimates for the stress concentration]{Estimates and Asymptotics for the stress concentration between closely spaced stiff $C^{1, \gamma}$ inclusions in linear elasticity}

\author[Y. Chen]{Yu Chen}
\address[Y. Chen]{School of Mathematical Sciences, Beijing Normal University, Laboratory of Mathematics and Complex Systems, Ministry of Education, Beijing 100875, China. }
\email{chenyu@amss.ac.cn.}

\author[H.G. Li]{Haigang Li}
\address[H.G. Li]{School of Mathematical Sciences, Beijing Normal University, Laboratory of Mathematics and Complex Systems, Ministry of Education, Beijing 100875, China. }
\email{hgli@bnu.edu.cn. }

\maketitle

\begin{abstract}
This paper is concerned with the stress concentration phenomenon in elastic composite materials when the inclusions are very closely spaced. We investigate the gradient blow-up estimates for the Lam\'{e} system of linear elasticity with partially infinite coefficients to show the dependence of the degree of stress enhancement on the distance between inclusions in a composite containing densely placed stiff inclusions. In this paper we assume that the inclusions to be of $C^{1, \gamma}$, weaker than the previous $C^{2, \gamma}$ assumption. To overcome this new difficulty, we make use of $W^{1, p}$ estimates for elliptic system with right hand side in divergence form, instead of a direct $W^{2, p}$ argument for $C^{2, \gamma}$ inclusion case, and combine with the Campanato's approach to establish the optimal gradient estimates, including upper and lower bounds. Moreover, an asymptotic formula of the gradient near the blow-up point is obtained for some symmetric $C^{1, \gamma}$ inclusions.
\vspace{.5cm}



\noindent {\bf Keywords:} \, Lam\'{e} system, 
Gradient estimates, Blow-up rates.

\end{abstract}

\section{Introduction}

\subsection{Background and problem formulation}
In high-contrast composite materials, when inclusions are very closely spaced the physical fields, such as the stress field or the electric field, may be arbitrarily large in the narrow region between them. From an engineering point of view, it is quite important to precisely understand such field concentration phenomenon.
In this paper, we consider the following boundary value problem of  Lam\'{e} system with partially degenerated coefficients to model a composite with two adjacent stiff inclusions. We follow the notation used in \cite{bll,bll2}. Let $D$ be a bounded open set in $\mathbb R^{d}$, $d\geq2$, $D_1$ and $D_2$ be two adjacent convex subdomains, with $\va$-apart. Let $u=(u^{(1)}, u^{(2)},\cdots, u^{(d)})^{T}$: $D \to \R^d$  be a vector-valued function, representing the displacement field, and verify the following boundary value problem
\begin{equation}\label{lame-system-infty}
\left\{
\begin{aligned}
&\mathcal{L}_{\lam, \mu}u:=\nabla\cdot \big( \mathbb{C}^0 e( u) \big)=0&\mbox{in}&~\Omega:=D\setminus\overline{D_{1}\cup D_{2} },\\
&  u|_+= u|_-&\mbox{on}&~\ptl D_1\cup \ptl D_2,\\
&e(u)=0 &\mbox{in}&~ D_1 \cup D_2,\\
&\int_{\ptl D_i}\frac{\ptl  u}{\ptl \nu}\Big|_+\cdot  \psi^\alp=0 
& i&=1,\, 2,\,~~\alp=1, \, 2,\, \cdots, \frac{d}{2}(d+1), \\
& u=\varphi &\mbox{on}&~\partial D,
\end{aligned}\right.
\end{equation}
where the elastic tensor $\mathbb{C}^0=\mathbb{C}(\lam, \mu)$ consists of elements
\begin{equation}\label{def-C-0}
C_{ijkl}(\lam, \mu)=\lam\delta_{ij}\delta_{kl}+\mu(\delta_{ik}\delta_{jl}+\delta_{il}\delta_{jk}),\quad i,\,j,\,k,\,l\,=1,\,2,\cdots,d,
\end{equation}
$\delta_{ij}$ is Kronecker symbol: $\delta_{ij}=0$ for $i\neq j$, $\delta_{ij}=1$ for $i=j$. 
So
\begin{equation}\label{operator}
\mathcal{L}_{\lam, \mu}u=\nabla\cdot \big( \mathbb{C}^0 e( u) \big)=\mu \Delta u+(\lam+\mu)\nabla \nabla\cdot u,
\end{equation}
where
\[e(u):=\frac{1}{2}\Big(\nabla u+(\nabla u)^{T}\Big)\]
is the strain tensor. The Lam\'{e} pair $(\lam,\mu)$ satisfies the strong convexity conditions $\mu>0$ and $d\lam+2\mu>0$, and the corresponding conormal derivatives on $\ptl D_i$ are defined by
\[\frac{\ptl  u}{\ptl \nu}\Big|_+:=\big( \mathbb{C}^0 e( u) \big){\bf n}=\lam (\nabla\cdot u){\bf n}+\mu \big(\nabla u+(\nabla u)^{T}\big){\bf n},\]
where ${\bf n}$ is the unit outer normal of $\ptl D_i$, $i=1,\, 2$. Here and throughout this paper, the subscript $\pm$ indicates the limit from outside and inside the domain, respectively. $\{\psi^\alp\}_{\alp=1}^{\frac{d(d+1)}{2}}$ is a basis of
\begin{equation*}
\Psi:=\Big\{\psi \in C^1(\R^d; \R^d) ~ : ~ \nabla \psi +(\nabla \psi)^{T}=0 \Big\},
\end{equation*}
a linear space of rigid displacement in $\R^d$, with dimension $d(d+1)/2$. The solution $u$ of \eqref{lame-system-infty} variationally is the unique minimizer of the following energy functional 
\begin{equation*}
E[u]=\min_{v\in\mathcal{S}}E[v],\qquad E[v]=\frac{1}{2}\int_{\Omega}\Big(\mathbb{C}^{0}e(v),e(v)\Big)dx,
\end{equation*}
in the function space
\[\mathcal{S}:=\{u\in H^1_{\varphi}(D; \R^d)~:~e(u)=0~\text{in}~D_1\cup D_2\}.\]
By the elliptic regularity theorems, $u\in C^1(\overline{\Omega}; \R^d) \cap C^1(\overline{D_1\cup D_2}; \R^d)$ for $C^{1,\gamma}$ domain, see \cite{bll}. Moreover, the solution $u$ of \eqref{lame-system-infty} is actually a weak limit in $H^1(\Omega; \R^d)$ of the solutions $\{u_n\}$ of the following isotropic homogeneous Lam\'{e} systems with piecewise constant coefficients
\begin{equation*}
\begin{cases}
\nabla \cdot \Big((\chi_{\Omega}\mathbb{C}^0+\chi_{D_1\cup D_2}\mathbb{C}^n)e(u_n)\Big)=0& \mbox{in}~D,\\
u_n=\varphi&\mbox{on}~\partial{D},
\end{cases}
\end{equation*}
where $\chi_D$ is characteristic function of $D$ and $\mathbb{C}^n=\mathbb{C}(\lam_n, \mu_n)$, when $\min\{\mu_n, d\lam_n+2\mu_n\}\to \infty$. This limit process can refer to the appendix in \cite{bll}, with modifying the $C^{2,\gamma}$ domain to $C^{1,\gamma}$.

This problem is stimulated by the study of the initiation and growth of damage in composite materials. For the Lam\'{e} system \eqref{operator}, Babu\u{s}ka et al. \cite{basl} numerically observed that the stress still remains bounded even if the distance $\va$ beween inclusions  tends to zero. Bonnetier and Vogelius \cite{bv} studied the scalar equation
\begin{equation}\label{equ-scalar}
\nabla \cdot \big((1+(k-1)\chi_{D_1\cup D_2})\nabla u\big)=0
\end{equation}
to model a problem of antiplane shear or electric conduction, where $D_1$ and $D_2$ are two touching disks with comparable radii in two dimensions and $k\neq 1$. They proved rigorously that the gradient of the solution of \eqref{equ-scalar} is indeed bounded, by using the M\"{o}bius transformation and the maximum principle. This result was extended by Li and Vogelius \cite{lv} to a large class of divergence form second order elliptic equations with piecewise H\"{o}lder continuous coefficients in all dimensions. They established global Lipschitz estimates and piecewise $C^{1,\alp}$ estimates. Li and Nirenberg \cite{ln} further extended that to general divergence form elliptic systems including the Lam\'{e} system. As to higher order derivative estimates, we draw the attention of readers to the open problem on Page 894 of \cite{lv}. So far, some progress has been made for scalar equation \eqref{equ-scalar} in two dimensions, see \cite{lv,dongli,dongzhang}.

Notice that those estimates in \cite{ln,lv} depend on the ellipticity of the coefficients. However, when the coefficients in $D_1$ and $D_2$ degenerate to infinite, the situation becomes quite different. For the scalar case, we call it {\it perfect conductivity problem}. Keller \cite{k1} was the first to compute the effective electrical conductivity for a composite containing a dense array of perfectly conducting spheres or circular cylinders. Recently, much effort has been devoted to understanding of the blow-up mechanics when the distance $\va$ tends to zero. 
It is known that the blow-up rate of $|\nabla u|$ is $\va^{-1/2}$ in two dimensions \cite{akl,aklll,y1}, $|\va\ln \va|^{-1}$ in three dimensions \cite{bly1,bly2,ly2}, $\va^{-1}$ in four and higher dimensions \cite{bly1,bly2}. There has been a long list of literature in this direction of research, for example, \cite{ackly,adkl,akllz,bt1,bt2,ky2,l,llby,LX,y3}.
Moreover, the characterizations of the singular behavior of $\nabla u$ was further developed in \cite{g,kly0,kly,kly2,lwx}. We would like to mention that the corresponding nonlinear problems, say, for $p$-Laplacian and for Finsler Laplacian, were  studied in \cite{gs,gs1,g}. While, when the coefficients partially degenerate to zero, the blow-up gradient estimates were obtained in \cite{akl,bly2}, and the corresponding characterization was obtained in \cite{lyu} by using an explicit function in dimensions two. For more work on elliptic equations and systems related to the study of composites, see \cite{adkl,bc,bjl,dong,dongli,dongzhang,jlx,fknn,l2020,llby,ll-p,LX,lz,m,g1} and the references therein.

In this paper, we investigate the gradient blow-up estimates for the solution of the linear elasticity problem \eqref{lame-system-infty}, as $\va$ goes to zero. However, there is significant difficulty in applying the methods for scalar equations to the Lam\'{e} system. For instance, the maximum principle does not hold for the Lam\'{e} system any more. Recently, Bao, Li and Li \cite{bll,bll2} applied an energy method and an iteration technique which was first used in \cite{llby} to obtain pointwise upper bounds of $|\nabla u|$ in the narrow region between inclusions. The corresponding boundary estimates when stiff inclusions are close to matrix boundary were established in \cite{bjl}. We notice that all these estimates mentioned above are established under an assumption that the inclusions are of $C^{2,\gamma}$, $0<\gamma<1$. On the other hand, in order to extend the layer potential techniques used in \cite{aklll} for scalar case  to the Lam\'{e} system, Kang and Yu \cite{ky}  assumed the inclusions are of $C^{3, \gamma}$, and then characterized the singular behavior of $|\nabla u|$ by constructing singular functions from nuclei of strain.  They also showed that blow-up rate $\va^{-1/2}$ is optimal in some two-dimensional cases. However, based on the classical elliptic theory, a natural question is whether it is possible to obtain gradient estimates of the solution of \eqref{lame-system-infty} under a weaker smoothness assumption on the inclusions, say, $C^{1,\gamma}$. This paper is a continuation of a paper by Chen, Li and  Xu \cite{clx}, where the perfect conductivity problem for scalar-valued $u$ with $C^{1,\gamma}$ inclusions was studied. Their proof makes use of the De Giorgi-Nash estimates for scalar elliptic equations and the Campanato's approach.  In this paper we extend it to the Lam\'{e} system for vector-valued $u$. Here one of the differences is that we make use of the  $W^{1,p}$ estimates for elliptic systems with right hand side in divergence form to replace the De Giorgi-Nash estimates used in \cite{clx} for the scalar case, and establish the pointwise upper bounds of the gradient. The other difference is that there are more free constants on the boundaries of inclusions to be handled, especially when we deal with the lower bound estimates of the gradient, even though we add certain symmetric assumption on the domain. Nevertheless, we further establish an asymptotic formula of the gradient near the origin after knowing that it is actually the blow-up location, from the upper and lower bound estimates obtained above.

Our strategy to establish the gradient estimates for the solution of  system of linear elasticity \eqref{lame-system-infty} in spirit follows \cite{bll,bll2}. Here we first point out that problem \eqref{lame-system-infty} has free boundary value feature. Although $e(u)=0$ implies $u$ is linear combination of $\psi^l$, 
\begin{equation*}
u=\sum_{l=1}^{d(d+1)/2}C_i^l \psi^l\quad \text{in}~~ \overline{D}_i \quad i=1, \, 2,
\end{equation*}
these $C_i^l$ are $d(d+1)$ free constants to be dealt with. This is the biggest difference with the perfect conductivity problem studied in \cite{bly1,clx}, where only two free constants need to be  handled for all dimensions. While, in linear elasticity case, how to determine these $d(d+1)$ constants $C_i^l$ is one of our main difficulties. To this end, using the continuity of $u$ across the boundaries of $D_{i}$, we decompose the solutions of \eqref{lame-system-infty}, as in \cite{bll}, as follows:
\begin{align}\label{decomposition-u}
u=\sum\limits_{l=1}^{\frac{d(d+1)}{2}}C_1^l v_1^l + \sum\limits_{l=1}^{\frac{d(d+1)}{2}}C_2^l v_2^l+v_0& \quad \text{in}~\Omega,
\end{align}
where $v_i^l \in{C}^{1}(\overline{\Omega}; \R^d)\cap C^2(\Omega; \R^d)$,  respectively, satisfies
\begin{equation}\label{equ_v1}
\begin{cases}
\mathcal{L}_{\lam, \mu}{v}_{i}^l=0& \mathrm{in}~\Omega,\\
v_{i}^l=\delta_{ij}\psi^l & \mathrm{on}~\partial{D}_{i},\\
v_{i}^l=0& \mathrm{on}~\partial{D},
\end{cases}\quad\mbox{for}~\,i,j=1, 2,~\,l=1,2,\cdots,\frac{d(d+1)}{2}, 
\end{equation}
and $v_0\in C^1(\overline{\Omega}; \R^d)\cap C^2(\Omega; \R^d)$ satisfies
\begin{equation}\label{equ_v3}
\begin{cases}
\mathcal{L}_{\lam, \mu}{v}_0=0& \mathrm{in}~\Omega,\\
v_0=0 & \mathrm{on}~\partial{D}_{1}\cup\ptl D_2,\\
v_0=\varphi & \mathrm{on}~\partial{D}.
\end{cases}
\end{equation}
 By the fourth line in \eqref{lame-system-infty} and decomposition \eqref{decomposition-u}, we have a linear system of these free constants $C_i^l$,
\begin{equation}\label{equ-decompositon}
\sum_{i=1}^2\sum\limits_{l=1}^{\frac{d(d+1)}{2}} C_i^l \int_{\ptl D_j} \frac{\ptl v_i^l}{\ptl \nu} \Big| _+\cdot \psi^k +\int_{\ptl D_j}\frac{\ptl v_0}{\ptl \nu}\Big|_+\cdot \psi^k =0,
\end{equation}
where $j=1,\, 2$, $k= 1, \, \cdots, \frac{d(d+1)}{2}$. But the coefficients here are all boundary integrals involving $\frac{\ptl v_i^l}{\ptl \nu}$ and $\frac{\ptl v_0}{\ptl \nu}$. To solve \eqref{equ-decompositon}, the hard work is to establish sufficiently good estimates of $\nabla v_i^l$ and $\nabla v_0$, rather than their $L^{\infty}$ estmates. However, for $C^{1,\gamma}$ inclusions, new difficulties need to be overcome to apply the iteration technique in \cite{bll,bll2} and obtain the local  energy estimates. Here, we turn to the Campanato's approach and the $W^{1, p}$ estimates, instead of the De Giorgi-Nash estimates  used in \cite{clx} for the scalar case. Finally we combine the estimates of $|\nabla v_i^l|$ and those of $C_i^l$ to show that the blow-up rate is $\va^{-1/(1+\gamma)}$ in $\R^2$, which is a little bigger than $\va^{-1/2}$ previously obtained in \cite{bll} for $C^{2,\gamma}$ inclusions.

Before we state our main results precisely, we first fix our domain and notation. Let $D_{1}^{0}$ and $D_{2}^{0}$ be a pair of (touching at the origin) convex subdomains of $D$, far away from the boundary $\partial D$, and satisfying
$$D_{1}^{0}\subset\{(x', x_{d})\in\mathbb R^{d}~:~ x_{d}>0\},\quad D_{2}^{0}\subset\{(x', x_{d})\in\mathbb R^{d}~:~ x_{d}<0\},$$
with $\{x_d=0\}$ being their common tangent plane, after a rotation of coordinates if necessary.
We use superscripts prime to denote ($d-1$)-dimensional variables and domains, such as $x'$ and $B'$. 
Translate $D_{i}^{0}$ ($i=1,\, 2$) by $\pm\frac{\va}{2}$ along $x_{d}$-axis in the following way
\begin{equation*}
D_{1}^{\va}:=D_{1}^{0}+(0',\frac{\va}{2})\quad \text{and}\quad D_{2}^{\va}:=D_{2}^{0}+(0',-\frac{\va}{2}).
\end{equation*}
For simplicity of notation, we drop the superscript $\va$ and denote
\begin{equation*}
D_{i}:=D_{i}^{\va}\, (i=1, \, 2), \quad  \Omega:=D\setminus\overline{D_1\cup D_2}.
\end{equation*}
So the distance between $D_1$ and $D_2$ is $\va$. Denote $P_1:= (0',\frac{\va}{2})$, $P_2:=(0',-\frac{\va}{2}) $ the two nearest points between $\ptl D_1$ and $\ptl D_2$ such that
\begin{equation*}
\text{dist}(P_1, P_2)=\text{dist}(\ptl D_1, \ptl D_2)=\va.
\end{equation*}
We here assume that $\ptl D_1$ and $\ptl D_2$ are of $C^{1, \gamma}$, $0<\gamma<1$ and there exists a constant $R_1$, independent of $\va$, such that the top and bottom boundaries of the narrow region between $\ptl D_1$  and $\ptl D_2$ can be represented, respectively, 
by graphs 
\begin{equation}\label{h1h2'}
x_d=\frac{\va}{2}+h_1(x')\quad\text{and}\quad x_d=-\frac{\va}{2}+h_2(x'),\quad \text{for}~ |x'|\leq 2R_1,
\end{equation}
where $h_1$, $h_2\in C^{1,\gamma}(B'_{2R_{1}}(0'))$ and satisfy 
\begin{equation}\label{h1-h2}
-\frac{\va}{2}+h_{2}(x') <\frac{\varepsilon}{2}+h_{1}(x'),\quad\mbox{for}~~ |x'|\leq 2R_1;
\end{equation}
\begin{equation}\label{h1h1}
h_{1}(0')=h_2(0')=0,\quad \nabla' h_{1}(0')=\nabla' h_2(0')=0;
\end{equation}
\begin{equation}\label{h1h_convex2}
\kappa_{0}|x'|^{\gamma}\leq|\nabla' h_{1}(x')|,|\nabla' h_2(x')|\leq\,\kappa_{1}|x'|^{\gamma},\quad\mbox{for}~~|x'|<2R_{1},
\end{equation}
and 
\begin{equation}\label{h1h3-1}
\|h_{1}\|_{C^{1,\gamma}(B'_{2R_{1}})}+\|h_{2}\|_{C^{1,\gamma}(B'_{2R_{1}})}\leq{\kappa_2},
\end{equation}
where the constants $0<\kappa_{0}<\kappa_{1}<\kappa_2$. Set
\begin{equation*}
\Omega_r:=\left\{(x',x_{d})\in \Omega~:~ -\frac{\va}{2}+h_2(x')<x_{d}<\frac{\va}{2}+h_1(x'),~|x'|<r\right\},
\end{equation*}
and assume that for some $\delta_0>0$,
\begin{equation}\label{coefficient-bound}
\delta_0\leq \mu,\, d\lam+2\mu\leq \frac{1}{\delta_0}.
\end{equation}
Then the first main result in this paper is as follows.
\begin{theorem}\label{thm-interior}
For $d=2$, let $D_1$, $D_2$ be two convex bounded $C^{1,\gamma}$ subdomains of $D$ with  $\va$ apart. Suppose \eqref{h1h2'}--\eqref{coefficient-bound} hold for $d=2$. Let $u\in H^1(D; \R^2)\cap C^1(\overline{\Omega}; \R^2)$ be the solution of \eqref{lame-system-infty} with $\varphi \in L^\infty(\ptl D; \R^2)$.  Then for sufficiently small $\va >0$,
\begin{equation}\label{upper-bounds}
|\nabla u(x_1,x_{2})| \leq \frac{C\va^{\frac{\gamma}{1+\gamma}}}{\va+|x_1|^{1+\gamma}}\cdot\|\varphi\|_{L^\infty(\ptl D; \R^2)}, \quad \text{for} ~ (x_1,x_{2})\in \Omega_{R_{1}},
\end{equation}
and
\begin{equation}\label{nabla-u-out}
\|\nabla u\|_{L^{\infty}(\Omega\setminus\Omega_{R_{1}})}\leq\,C\|\varphi\|_{L^\infty(\ptl D; \R^2)},
\end{equation}
where $C$ is a positive constant independent of $\va$.
\end{theorem}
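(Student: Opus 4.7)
My plan is to use the decomposition \eqref{decomposition-u}, so that
\[
\nabla u = \sum_{i=1}^{2}\sum_{l=1}^{3} C_i^l\,\nabla v_i^l + \nabla v_0,
\]
and split the argument into (i) pointwise gradient estimates for the auxiliary functions $v_i^l$ and $v_0$ in the narrow region $\Omega_{R_1}$, and (ii) sharp $\va$-asymptotics of the free constants $C_i^l$ obtained by solving the linear system \eqref{equ-decompositon}. The bound \eqref{nabla-u-out} away from $\Omega_{R_1}$ is then immediate from standard interior/boundary Schauder estimates applied to $u$ once the $C_i^l$ are known to be controlled by $\|\varphi\|_{L^\infty(\partial D;\mathbb R^2)}$.

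For step (i), I would introduce an explicit linear profile $\bar v_i^l$ that interpolates between the rigid-displacement datum $\psi^l$ on $\partial D_i$ and $0$ on $\partial D_j$ across the narrow channel. The difference $w_i^l := v_i^l - \bar v_i^l$ vanishes on $\partial D_1\cup\partial D_2$ and satisfies $\mathcal L_{\lambda,\mu}w_i^l = -\mathcal L_{\lambda,\mu}\bar v_i^l$. Under the stronger $C^{2,\gamma}$ hypothesis of \cite{bll,bll2} one differentiates the equation and iterates with $W^{2,p}$ estimates; in the $C^{1,\gamma}$ setting this is no longer possible, and I would instead rewrite the equation in divergence form $\mathcal L_{\lambda,\mu}w_i^l = \nabla\cdot F$, invoke the $W^{1,p}$ theory for elliptic systems in divergence form, and then run a Campanato-type decay iteration after rescaling each degenerating subcylinder
\[
\widehat\Omega_s(z_1) := \{(x_1,x_2)\in\Omega : |x_1-z_1|<s\}
\]
to a domain of unit size. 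For the singular translation component this should produce a pointwise bound of the form $|\nabla v_i^l(x)|\leq C/(\va + |x_1|^{1+\gamma})$ in $\Omega_{R_1}$, together with $|\nabla(v_1^l+v_2^l)|+|\nabla v_0|\leq C$ in $\Omega_{R_1}$ using that $v_1^l+v_2^l$ extends by $\psi^l$ across $D_1\cup D_2$ to yield a function whose boundary data are regular across the touching point.

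For step (ii), the matrix entries of \eqref{equ-decompositon} are $a_{ij}^{lk}=\int_{\partial D_j}(\partial v_i^l/\partial\nu)|_+\cdot\psi^k$, and its right-hand side is $b_j^k = -\int_{\partial D_j}(\partial v_0/\partial\nu)|_+\cdot\psi^k$. Rewriting these as volume energies $\int_\Omega(\mathbb C^0 e(v_i^l), e(v_j^k))\,dx$ and splitting the integration between $\Omega_{R_1}$ and $\Omega\setminus\Omega_{R_1}$ (the latter controlled by interior regularity away from the touching point), I expect the diagonal entries corresponding to the singular modes to blow up like $\va^{-\gamma/(1+\gamma)}$, while all other entries and all $b_j^k$ remain $O(\|\varphi\|_{L^\infty})$. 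Cramer's rule then yields $|C_1^l - C_2^l|\leq C\va^{\gamma/(1+\gamma)}\|\varphi\|_{L^\infty}$ for the singular modes and $|C_i^l|\leq C\|\varphi\|_{L^\infty}$ otherwise. Plugging back into \eqref{decomposition-u} via the regrouping $C_1^l\nabla v_1^l + C_2^l\nabla v_2^l = (C_1^l-C_2^l)\nabla v_1^l + C_2^l\nabla(v_1^l+v_2^l)$ then yields the claimed bound \eqref{upper-bounds}.

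The main obstacle is step (i): the sharp pointwise gradient bound on $v_i^l$ under only $C^{1,\gamma}$ regularity of the inclusions. Without one additional derivative of the graphs $h_1, h_2$ one cannot differentiate the system as in \cite{bll,bll2}, so the whole iteration must be carried out via $W^{1,p}$ and Morrey--Campanato decay, carefully tracking how the $C^{1,\gamma}$ modulus \eqref{h1h_convex2} rescales under the change of variable that flattens the degenerating subcylinder. The vector-valued nature of the Lam\'e system also rules out the De Giorgi--Nash ingredients exploited in \cite{clx} for the scalar conductivity problem. It is precisely this loss of a derivative that degrades the blow-up rate from the classical $\va^{-1/2}$ (for $C^{2,\gamma}$ inclusions) to $\va^{-1/(1+\gamma)}$ here.
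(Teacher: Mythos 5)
Your proposal follows essentially the same route as the paper: decompose $u$ via $v_i^l$, $v_0$; build auxiliary profiles $\bar u_i^l$; run an energy/Campanato iteration over rescaled degenerating subcylinders, using the $W^{1,p}$ and $C^{1,\gamma}$ boundary estimates in divergence form (Theorems 2.5 and 2.7 of the paper) in place of the $W^{2,p}$ arguments that $C^{2,\gamma}$ smoothness would allow; and then solve the linear system for the $C_i^l$ via energy identities for the $a_{ij}^{kl}$ and Cramer's rule. The paper actually streamlines the linear algebra slightly by first establishing the crude bound $|C_i^l|\le C$ and then solving a reduced $3\times 3$ system (only for the differences $C_1^k-C_2^k$, $k=1,2$) with right-hand side $\widetilde b_1^l$ built from $u_b=\sum_l C_2^l(v_1^l+v_2^l)+v_0$, but this is bookkeeping, not a different idea.

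One small imprecision worth correcting: the matrix entries $a_{ij}^{kl}$ do not depend on $\varphi$ at all, so "remain $O(\|\varphi\|_{L^\infty})$" should read "$O(1)$"; moreover the off-diagonal entry $a_{11}^{12}$ is not bounded but grows like $|\ln\va|$ (Lemma 3.1). Since $|\ln\va|=o(\va^{-\gamma/(1+\gamma)})$ this does not affect the Cramer-rule conclusion $|C_1^k-C_2^k|\le C\va^{\gamma/(1+\gamma)}$, but a careful write-up must record this logarithmic growth rather than assert boundedness. Similarly, for the rotation mode $l=3$ the paper does not prove any smallness of $C_1^3-C_2^3$; it simply uses $|C_i^3|\le C$ together with the weaker pointwise bound $|\nabla v_i^3|\le C(\va+|x_1|)/(\va+|x_1|^{1+\gamma})$, so your uniform regrouping must be read with that caveat (which is harmless since that weaker bound already suffices).
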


Here and throughout this paper, unless otherwise stated, $C$ denotes a constant, whose value may vary from line to line, depending only on $n$, $\delta_0$, $\kappa_{0}$,$\ \kappa_1$, $ \kappa_2$, $ R_{1}$, and an upper bound of the $C^{1,\gamma}$ norms of $\partial{D}_{1}$ and $\partial{D}_2$, but not on $\varepsilon$. We call a constant having such dependence a {\it universal constant}. 

\begin{remark}
	From  pointwise upper bound estimate \eqref{upper-bounds}, we see that on the segment $\overline{P_1 P_2}$
	\begin{equation*}
	|\nabla u(0, x_2)|\Big|_{\overline{P_1 P_2}}\leq \frac{C}{\va^{1/(1+\gamma)}} \|\varphi\|_{L^\infty(\ptl D)}.
	\end{equation*}
\end{remark}
To show that this blow-up rate, $\va^{-1/(1+\gamma)}$, is optimal, we shall provide a lower bound of $|\nabla u(x)|$ on the segment $\overline{P_1 P_2}$, with the same blow-up rate, for some symmetric domains in dimension $d=2$. We assume that
\begin{equation}\label{assump-symm}
D_1\cup D_2~\mbox{and}~ D~\mbox{are~ symmetric~ with~ respect~ to~} x_{1} \mbox{-axis ~and~} x_{2}\mbox{-axis}.
\end{equation}
Furthermore, we assume that the boundary data $\varphi=(\varphi^{(1)}, \varphi^{(2)})^{\text{T}}\in L^\infty(\ptl D;\, \R^2)$ satisfies that $\varphi^{(1)}(x_{1},x_{2})$ is odd with respect to $x_{2}$ and $\varphi^{(2)}(x_{1},x_{2})$ is even with respect to $x_{2}$, that is, 
\begin{equation}\label{assup-varphi}
\left\{\begin{aligned}
\varphi^{(1)}(x_1, x_2)&=-\varphi^{(1)}(x_1, -x_2),\\
\varphi^{(2)}(x_1, x_2)&=\varphi^{(2)}(x_1, -x_2),
\end{aligned}\right. \qquad\mbox{for}~(x_1, x_2)\in\ptl D.
\end{equation}
For instance, we may take $\varphi=(x_2, x_1)$. We define a family of linear functionals of $\varphi$ by
\begin{equation}\label{def-blowup-factor}
\widetilde{b}_{*j}^{l}[\varphi]:=\int_{\ptl D_j^*} \frac{\ptl u^*}{\ptl \nu}\Big|_+ \cdot \psi^l, \quad  j=1, \, 2, ~~  l=1, \, 2, \, 3,
\end{equation}
where $u^*$ verifies  
\begin{equation}\label{equ-limit}
\left\{\begin{aligned}
& \mathcal{L}_{\lam, \mu} u^*=0, &\text{in}& ~~\Omega^*,\\
& u^*=\sum_{l=1}^3C_*^l \psi^l, &\text{on}& ~~\ptl D_1^*\cup\ptl D_2^*,\\
&\int_{\ptl D_1^*}\frac{\ptl u^*}{\ptl \nu}\Big|_+\cdot \psi^k+ \int_{\ptl D_2^*}\frac{\ptl u^*}{\ptl \nu}\Big|_+\cdot \psi^k=0, ~~~&k&=1, \, 2,\, 3,\\
&u^*=\varphi, &\text{on}&~~ \ptl D,
\end{aligned}
\right.
\end{equation}
and 
\begin{equation*}\label{def-domain}
D_i^*:=\{x\in \R^2 : x+P_i \in D_i\},\quad i=1, \ 2, \quad\Omega^*:=D\setminus \overline{D_1^*\cap D_2^*}.
\end{equation*}
Then we have

\begin{theorem}\label{thm-lower-bound}
For $d=2$, under the assumptions as in Theorem \ref{thm-interior}, with additional symmetric condition \eqref{assump-symm}, let $u\in H^1(D; \R^2)\cap C^1(\overline{\Omega}; \R^2)$ be the solution of \eqref{lame-system-infty}. If there exists $\varphi \in L^\infty(\ptl D;\, \R^2)$ satisfying \eqref{assup-varphi}
	 such that $\widetilde{b}_{*1}^{\,k_0}[\varphi] \neq 0$ for some $k_0\in \{1, \, 2\}$, then, for sufficiently small $\va>0$, 
	\begin{equation*}
|\nabla u(x)|\geq \frac{|\widetilde{b}_{*1}^{\,k_0}[\varphi] |}{C}\frac{1}{\va^{1/(1+\gamma)}}, \quad\text{for} ~~ x\in \overline{P_1 P_2},
	\end{equation*}
where $C$ is a universal constant.
\end{theorem}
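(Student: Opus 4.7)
The plan is to exploit the symmetries \eqref{assump-symm}--\eqref{assup-varphi} to reduce the six free constants $\{C_i^l\}$ in \eqref{decomposition-u} to three, then invert the resulting $3\times 3$ version of \eqref{equ-decompositon} quantitatively to extract $|C_1^{k_0}|\geq c\,\varepsilon^{\gamma/(1+\gamma)}|\widetilde b_{*1}^{\,k_0}[\varphi]|$, and finally read the lower bound on $|\nabla u|$ along $\overline{P_1P_2}$ directly off \eqref{decomposition-u}.

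First I would show by uniqueness applied to the reflected field
\[
\tilde u(x_1,x_2):=\bigl(-u^{(1)}(x_1,-x_2),\,u^{(2)}(x_1,-x_2)\bigr)^T,
\]
which by \eqref{assump-symm}--\eqref{assup-varphi} solves the same problem \eqref{lame-system-infty} as $u$, that $\tilde u=u$. Evaluating on the inclusions in the standard basis $\psi^1=(1,0)^T$, $\psi^2=(0,1)^T$, $\psi^3=(-x_2,x_1)^T$ of $\Psi$ then forces
\[
C_1^1=-C_2^1,\qquad C_1^2=C_2^2,\qquad C_1^3=C_2^3,
\]
so that \eqref{decomposition-u} reads $u=C_1^1(v_1^1-v_2^1)+C_1^2(v_1^2+v_2^2)+C_1^3(v_1^3+v_2^3)+v_0$, and the six equations in \eqref{equ-decompositon} collapse to an effective $3\times 3$ linear system of the form $A^\varepsilon(C_1^1,C_1^2,C_1^3)^T=-b^\varepsilon$.

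Next I would analyze $A^\varepsilon$ and $b^\varepsilon$ quantitatively. Introduce rigid-interpolant auxiliary functions $\bar v_i^l$ equal to $\delta_{ij}\psi^l$ on $\partial D_j$ and linear in $x_2$ in $\Omega_{R_1}$. The $W^{1,p}$ and Campanato estimates already developed to prove Theorem \ref{thm-interior} give $\nabla v_i^l=\nabla\bar v_i^l+\text{(remainder of strictly lower order)}$, which upon integration by parts yields $A^\varepsilon_{lk}=\int_\Omega\bigl(\mathbb C^0 e(\bar v_1^l),e(\bar v_1^k)\bigr)\,dx+O(1)$. Explicit computation using $\delta(x')\sim\varepsilon+|x'|^{1+\gamma}$ from \eqref{h1h_convex2} together with parity in $x_2$ to annihilate the cross integrals then produces the diagonal blow-up $A^\varepsilon_{ll}\sim\varepsilon^{-\gamma/(1+\gamma)}$ for the translation modes $l\in\{1,2\}$, while all off-diagonal entries and all entries indexed by the rotation mode $l=3$ remain $O(1)$. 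Meanwhile, comparing \eqref{equ_v3} to the limit problem \eqref{equ-limit} in $H^1$ gives $b^\varepsilon_k\to\widetilde b_{*1}^{\,k}[\varphi]$ as $\varepsilon\to0$. A Cramer-rule / Schur-complement analysis of the resulting block structure then delivers
\[
|C_1^{k_0}|\geq c\,\varepsilon^{\gamma/(1+\gamma)}\bigl|\widetilde b_{*1}^{\,k_0}[\varphi]\bigr|\quad(k_0\in\{1,2\}),\qquad|C_1^3|\leq C.
\]

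Finally, the refined pointwise asymptotic $\partial_{x_2}v_i^l(0,x_2)=(-1)^{i+1}\psi^l(P_i)/\varepsilon+O(1)$ on the segment, inserted into the symmetry-reduced decomposition, yields
\[
\partial_{x_2}u^{(1)}(0,x_2)=\frac{2C_1^1}{\varepsilon}-C_1^3+O(1),
\]
whose magnitude is $\geq c\,\varepsilon^{-1/(1+\gamma)}|\widetilde b_{*1}^{\,1}[\varphi]|$ when $k_0=1$; the case $k_0=2$ is handled analogously by examining the cross-component of $\nabla u$ produced by the Lam\'e coupling of the vertical translation mode. The main obstacle is the sharp two-sided matrix asymptotic in the third paragraph: the diagonal blow-up $A^\varepsilon_{ll}\sim\varepsilon^{-\gamma/(1+\gamma)}$ for $l\in\{1,2\}$ must be established with a matching lower bound and, simultaneously, every off-diagonal and rotation-indexed entry must be shown to remain $O(1)$, in a form quantitative enough to survive Cramer inversion. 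Under $C^{2,\gamma}$ regularity \cite{bll,bll2} this rests on $W^{2,p}$ estimates and explicit profiles; here the weaker $C^{1,\gamma}$ regularity forces the $W^{1,p}$/Campanato estimates of Theorem \ref{thm-interior} to be pushed to quantitative form, and the parity cancellations supplied by \eqref{assump-symm}--\eqref{assup-varphi} are essential for controlling the cross-integrals that would otherwise contaminate the diagonal blow-up.
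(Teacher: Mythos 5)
Your reflection/uniqueness argument in the first paragraph is valid and is actually a cleaner route than the paper's. Defining $\tilde u(x_1,x_2)=(-u^{(1)}(x_1,-x_2),\,u^{(2)}(x_1,-x_2))^T$ and checking that $\tilde u$ solves \eqref{lame-system-infty} with the same data $\varphi$, then invoking uniqueness of the energy minimizer, indeed forces $C_1^1=-C_2^1$, $C_1^2=C_2^2$, $C_1^3=C_2^3$. The paper obtains only $C_1^3=C_2^3$ directly, working instead with the parity relations \eqref{rel-v1-2}--\eqref{rel-v2-3} for the $v_i^l$, the resulting vanishing \eqref{equal-aij=0} of off-diagonal $a_{ij}^{kl}$, and Cramer's rule on the $6\times 6$ system \eqref{matrix-66}. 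Your approach reduces the unknowns at the outset; the paper keeps all six constants and reads off the same conclusion after the fact. Both are legitimate; yours is more direct.

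However, the third paragraph contains a genuine error and an unnoticed consequence of your own first step. Under the reduction $u=C_1^1(v_1^1-v_2^1)+C_1^2(v_1^2+v_2^2)+C_1^3(v_1^3+v_2^3)+v_0$, the $(2,2)$ entry of your reduced matrix $A^\varepsilon$ is $a_{11}^{22}+a_{12}^{22}=\int_\Omega\bigl(\mathbb{C}^0 e(v_1^2),\,e(v_1^2+v_2^2)\bigr)\,dx$. Since $v_1^2+v_2^2$ has \emph{bounded} gradient (Proposition \ref{prop-interior}(iii)), this entry is $O(1)$, not $\sim\varepsilon^{-\gamma/(1+\gamma)}$: the two blowing-up contributions in $a_{11}^{22}$ and $a_{12}^{22}$ cancel exactly because of the sum structure that your symmetry relation $C_1^2=C_2^2$ imposes. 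Only $A^\varepsilon_{11}=a_{11}^{11}-a_{12}^{11}=\int_\Omega\bigl(\mathbb{C}^0 e(v_1^1),\,e(v_1^1-v_2^1)\bigr)\,dx$ blows up, because it is the \emph{difference} $v_1^1-v_2^1$ whose boundary jump produces the large gradient. So the claimed diagonal blow-up ``for the translation modes $l\in\{1,2\}$'' is false, and the promised Schur-complement inversion is being applied to the wrong block structure.

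Moreover, your relation $C_1^2=C_2^2$, combined with the paper's decoupled second equation $a_{11}^{22}(C_1^2-C_2^2)=\widetilde b_1^2$ (which follows from \eqref{equal-aij=0}), forces $\widetilde b_1^2=0$; and combining the touching-limit constraint $\widetilde b_{*1}^{\,k}+\widetilde b_{*2}^{\,k}=0$ from \eqref{equ-limit} with the parity relation $\widetilde b_{*2}^{\,2}=\widetilde b_{*1}^{\,2}$ gives $\widetilde b_{*1}^{\,2}=0$ outright. Hence the hypothesis $\widetilde b_{*1}^{\,k_0}[\varphi]\neq 0$ can only ever hold with $k_0=1$; the $k_0=2$ case of the theorem is vacuous, and the concluding sentence ``the case $k_0=2$ is handled analogously'' is not a correct account of what happens. (The paper's own statement shares this slight inefficiency, but the paper's proof only ever extracts a lower bound from $C_1^{k_0}-C_2^{k_0}=\widetilde b_1^{k_0}/a_{11}^{k_0 k_0}$ and never hinges on the $k_0=2$ case being non-vacuous.)

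Finally, the convergence $b^\varepsilon_k\to\widetilde b_{*1}^{\,k}[\varphi]$ is asserted as ``comparing \eqref{equ_v3} to the limit problem \eqref{equ-limit} in $H^1$,'' but this is exactly the quantitative content of Lemma \ref{lem-v-v} and Proposition \ref{prop-blowup-factor-limit}, which is a nontrivial piece of work: one first shows $|v_i^l-v_i^{*l}|\leq C\varepsilon^{\gamma/(1+2\gamma)}$ in the region away from a shrinking neck of width $\varepsilon^{1/(1+2\gamma)}$ using the maximum principle for the Lam\'e system and mean value theorem estimates, then propagates this to the boundary gradient on $\partial D$, and finally transfers it to the boundary-integral functionals $\widetilde b_j^k$. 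That machinery is missing from your sketch, and without it the lower bound only becomes quantitative in the sense that one cannot control how small $\varepsilon$ must be. Your outline is correct in spirit for $k_0=1$ once the block structure is fixed, but the matrix asymptotic error and the unaddressed convergence argument are real gaps.
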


\begin{remark}
We remark that assumption \eqref{h1h_convex2} can be replaced by a weaker assumption in the following
\begin{equation*}
\kappa_0 |x'|^{1+\gamma}\leq h_1(x')-h_2(x')\leq \kappa_1|x'|^{1+\gamma},~~\text{and}~ |\nabla' h_{i}(x')|\leq\,\kappa_2 |x'|^{\gamma},\quad\mbox{for}~~|x'|<2R_{1},
\end{equation*}	
for $i=1, \,2$,  $0<\gamma<1$, and $\kappa_j>0$, $j=0,\,1,\, 2$. We would like to point out that for $m$-convex inclusions (the power $1+\gamma$ above replaced by any positive integer $m\geq 2$), a relationship between the blow-up rate of gradient and the order of the relative convexity of inclusions was revealed in \cite{hjl}. Thus, the result in this paper for $1<m=1+\gamma<2$  in some sense can be regarded as a supplement to those in \cite{hjl}. While,  when $m=1$, for the Lipschitz inclusions, the corner singularity will be another interesting and challenge topic. For the scalar case, we refer to  Kozlov et al's book \cite{kmr} and Kang and Yun's recent work for bow-tie structure \cite{ky2}.
\end{remark}

Actually, the lower bound in Theorem \ref{thm-lower-bound} not only shows that the blow-up rate $\va^{-1/(1+\gamma)}$ is optimal but also allows us to identify that the blow-up location is at the origin. However, as a mater of fact, under the symmetric assumptions on the domain and the given boundary value, like in \eqref{assump-symm} and \eqref{assup-varphi}, we can obtain the asymptotic expansion of $\nabla u$ near the origin. Our method is totally different with that used in \cite{ky}, where the layer potential method is applied and the domain is assumed to be of $C^{3, 1}$. Let
\begin{equation}\label{def-wide-Q}
\widetilde{Q}_\gamma:=2\int_{0}^{+\infty}\frac{1}{1+ t^{1+\gamma}} dt,
\end{equation}	
and $E_{ij}$ denote the basic matrix with only one non-zero entry 1 in the $i\mbox{-th}$ row and $j\mbox{-th}$ column.
We have

\begin{theorem}\label{thm-asymptotics}
	For $d=2$, under the assumptions in Theorem \ref{thm-lower-bound}, and instead of \eqref{h1h1} and \eqref{h1h_convex2}, we assume that 
	\begin{equation}\label{asymp-symm-h1h2}
	h_1(x_1)=\frac{\kappa}{2}|x_1|^{1+\gamma}+O(|x_1|^{2+\gamma}), \quad h_2(x_1)=-h_1(x_1), \quad \text{for}~~ |x_1|\leq 2R_1,
	\end{equation}
	where $0<\gamma<1$ and $\kappa>0$ are independent of $\va$. Let $u\in H^1(D; \R^2)\cap C^1(\overline{\Omega}; \R^2)$ be the solution of \eqref{lame-system-infty}. If there exists $\varphi \in L^\infty(\ptl D;\, \R^2)$ satisfying \eqref{assup-varphi}, then, for sufficiently small $\va>0$,
	\begin{equation}\label{asymp-nabla}
	\begin{aligned}
	\nabla u(x)&=\frac{\va^{\frac{\gamma}{1+\gamma}}}{\va+2h_1(x_1)}\cdot \frac{\kappa^{\frac{1}{1+\gamma}}}{\widetilde{Q}_\gamma}\Big( \mathbb{B}_1[\varphi] +O(\va^{\frac{\gamma}{1+2\gamma}}) \Big) +O(1)\|\varphi\|_{L^\infty(\ptl D)}, 
	\end{aligned}
	\end{equation}
	for $x\in \Omega_{\va^{1/(1+\gamma)}}$, where
	\begin{equation*}
	\mathbb{B}_1[\varphi]:=\frac{\widetilde{b}_{*1}^{\, 1}[\varphi]}{\mu} E_{12}+ \frac{\widetilde{b}_{*1}^{\, 2}[\varphi]}{\lam+2\mu} E_{22},
	\end{equation*}
is a blow-up factor matrix.
\end{theorem}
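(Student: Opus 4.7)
The plan is to insert explicit asymptotic ans\"atze for the basis functions $v_i^l$ and the free constants $C_i^l$ into the decomposition \eqref{decomposition-u}, and then solve the linear system \eqref{equ-decompositon} asymptotically. First I would exploit the symmetry: under \eqref{assump-symm} and \eqref{assup-varphi}, uniqueness forces $u^{(1)}$ to be odd and $u^{(2)}$ to be even in $x_2$, which, combined with $u|_{\overline{D_i}}=\sum_{l} C_i^l\psi^l$ and the basis $\psi^1=e_1$, $\psi^2=e_2$, $\psi^3=(-x_2,x_1)^T$, gives the three relations
\begin{equation*}
C_1^1=-C_2^1,\qquad C_1^2=C_2^2,\qquad C_1^3=C_2^3,
\end{equation*}
collapsing \eqref{equ-decompositon} to a $3\times 3$ linear system for $(C_1^1,C_1^2,C_1^3)$.

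Next, in the narrow region $\Omega_{R_1}$ I would introduce the affine interpolations
\begin{equation*}
\bar v_1^l(x):=\psi^l(x)\alpha(x),\qquad \bar v_2^l(x):=\psi^l(x)\bigl(1-\alpha(x)\bigr),\qquad \alpha(x):=\frac{x_2+\va/2-h_2(x_1)}{\va+h_1(x_1)-h_2(x_1)},
\end{equation*}
extended outside by a standard cut-off. Combining the $W^{1,p}$ estimate for elliptic systems with the Campanato iteration developed in the proof of Theorem~\ref{thm-interior} should yield that $\nabla(v_i^l-\bar v_i^l)$ is strictly subdominant in $\Omega_{R_1}$, so that the leading term of $\nabla v_1^l$ is $\partial_{x_2}\bar v_1^l\simeq \psi^l(x)\otimes e_2/(\va+2h_1(x_1))$ for $l=1,2$, while for the rotation mode $l=3$ the prefactor $\psi^3(x)$ has magnitude $\lesssim \va^{1/(1+\gamma)}$ on the critical strip $|x_1|\lesssim \va^{1/(1+\gamma)}$ and produces a strictly smaller contribution.

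Third, I would compute the coefficient matrix of \eqref{equ-decompositon}. After integration by parts the $(l,k)$ coefficient equals, up to lower order, the elastic energy integral of the interpolations, dominated by the $e_{12}$ (resp.\ $e_{22}$) component for $l=k=1$ (resp.\ $l=k=2$). Using $h_1(x_1)=\tfrac{\kappa}{2}|x_1|^{1+\gamma}+O(|x_1|^{2+\gamma})$ and the substitution $x_1=(\va/\kappa)^{1/(1+\gamma)}t$,
\begin{equation*}
\int_{-R_1}^{R_1}\frac{dx_1}{\va+2h_1(x_1)}=\frac{\widetilde Q_\gamma}{\kappa^{1/(1+\gamma)}\,\va^{\gamma/(1+\gamma)}}+O(1),
\end{equation*}
so the $(1,1)$ entry scales like $\mu\widetilde Q_\gamma\kappa^{-1/(1+\gamma)}\va^{-\gamma/(1+\gamma)}$, the $(2,2)$ entry like $(\lam+2\mu)\widetilde Q_\gamma\kappa^{-1/(1+\gamma)}\va^{-\gamma/(1+\gamma)}$, and all other entries are strictly smaller. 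On the right-hand side, $v_0$ converges in $H^1$ on compact subsets of $\Omega^*\setminus\{0\}$ to $u^*$ from \eqref{equ-limit}, which identifies $\int_{\partial D_j}\tfrac{\partial v_0}{\partial\nu}\big|_+\cdot\psi^k$ with $\widetilde b_{*j}^{\,k}[\varphi]+o(1)$. Inverting the diagonally dominant system then gives, for $l=1,2$,
\begin{equation*}
C_1^l=\frac{\kappa^{1/(1+\gamma)}\,\widetilde b_{*1}^{\,l}[\varphi]}{\mu_l\,\widetilde Q_\gamma}\,\va^{\gamma/(1+\gamma)}+o\bigl(\va^{\gamma/(1+\gamma)}\bigr),\qquad \mu_1=\mu,\ \mu_2=\lam+2\mu,
\end{equation*}
and $C_1^3=o(\va^{\gamma/(1+\gamma)})$. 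Plugging these and the expansion of $\nabla v_i^l$ into \eqref{decomposition-u}, together with $\nabla v_0=O(1)\|\varphi\|_{L^\infty(\partial D)}$, produces \eqref{asymp-nabla}.

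The hardest step will be quantifying the error $O(\va^{\gamma/(1+2\gamma)})$ in \eqref{asymp-nabla}. One must balance two sources of error: first, the pointwise Campanato-type remainder of $\nabla(v_i^l-\bar v_i^l)$, which is worst near the contact point where only the bare $C^{1,\gamma}$ smoothness of $\partial D_i$ is available; second, the rate at which the auxiliary right-hand sides $\int_{\partial D_j}\tfrac{\partial v_0}{\partial\nu}\big|_+\cdot\psi^k$ approach the blow-up factors $\widetilde b_{*j}^{\,k}[\varphi]$. Both errors must then be controlled uniformly on the thin strip of width $\simeq \va^{1/(1+\gamma)}$ where the singular part of $\nabla u$ is supported, and this is where the bulk of the quantitative work should concentrate.
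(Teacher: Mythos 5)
Your overall strategy---insert asymptotic ans\"atze into the decomposition and solve the linear system \eqref{equ-decompositon} asymptotically---is the right one, and your symmetry observation that $C_1^1=-C_2^1$, $C_1^2=C_2^2$, $C_1^3=C_2^3$ is correct and arguably a cleaner way of obtaining \eqref{equal-c13=c23} than the paper's Cramer's-rule computation. However, three of your intermediate claims are wrong, and they do not cancel out.

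First, the reduced $3\times 3$ system for $(C_1^1,C_1^2,C_1^3)$ does not have the diagonal structure you assert. After substituting the parity relations, the $k$-th equation reads
\begin{equation*}
C_1^1\bigl(a_{11}^{1k}-a_{21}^{1k}\bigr)+C_1^2\bigl(a_{11}^{2k}+a_{21}^{2k}\bigr)+C_1^3\bigl(a_{11}^{3k}+a_{21}^{3k}\bigr)=b_1^k,
\end{equation*}
and the sign in front of $a_{21}^{lk}$ is decisive: since $\nabla(v_1^l+v_2^l)$ is bounded (Proposition~\ref{prop-interior}(iii)) while $\nabla(v_1^l-v_2^l)$ is singular, one has $a_{11}^{11}-a_{21}^{11}\approx 2a_{11}^{11}\sim\va^{-\gamma/(1+\gamma)}$, whereas $a_{11}^{22}+a_{21}^{22}$ and $a_{11}^{33}+a_{21}^{33}$ are merely $O(1)$ because $a_{21}^{ll}\approx -a_{11}^{ll}$ to leading order. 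Consequently your claim that the $(2,2)$ entry scales like $(\lam+2\mu)\widetilde Q_\gamma\kappa^{-1/(1+\gamma)}\va^{-\gamma/(1+\gamma)}$ is false, and $C_1^2,\,C_1^3$ are genuinely $O(1)$ (they converge to $C_*^2,\,C_*^3$, which are generically nonzero), not $o(\va^{\gamma/(1+\gamma)})$. Their contributions to $\nabla u$ enter only through $C_1^l\nabla(v_1^l+v_2^l)=O(1)$, but your formula for $C_1^l$ for $l=2$ is simply incorrect. Second, you identify $\int_{\partial D_j}\tfrac{\partial v_0}{\partial\nu}\big|_+\cdot\psi^k$ with $\widetilde b_{*j}^{\,k}[\varphi]$ by saying ``$v_0$ converges to $u^*$''; this is not so: $v_0$ has zero data on $\partial D_1\cup\partial D_2$ and converges to $v_0^*$ (zero data on $\partial D_1^*\cup\partial D_2^*$), while $u^*$ carries the nonzero constant data $\sum_l C_*^l\psi^l$. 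The paper routes this through $u_b=\sum_l C_2^l(v_1^l+v_2^l)+v_0$ and $\widetilde b_j^l$ (the flux of $u_b$, not of $v_0$), and Proposition~\ref{prop-blowup-factor-limit} then gives $\widetilde b_j^k=\widetilde b_{*j}^k+O(\va^{\gamma/(1+2\gamma)})$; this is where the claimed remainder rate comes from, and it relies on Lemmas~\ref{lem-v-v} and \ref{lem-ck-c*} (cylinder truncation at scale $\va^{1/(1+2\gamma)}$ plus interpolation), for which you have no replacement. Third, there is an untracked factor of $2$: your formula for $C_1^1$ coincides with the paper's asymptotics of $C_1^1-C_2^1=2C_1^1$, and when you multiply by $\nabla(v_1^1-v_2^1)\approx 2\,\partial_2\bar u\,E_{12}$ this would give twice \eqref{asymp-nabla}. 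The paper sidesteps all three subtleties by writing $\nabla u=\sum_{k=1}^2(C_1^k-C_2^k)\nabla v_1^k+\nabla u_b$ so that only the already-diagonal matrix $(a_{11}^{kl})$, the quantities $\widetilde b_1^k$, and the asymptotics of $a_{11}^{kk}$ from Proposition~\ref{proposition-a11-22} are needed.
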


Finally, following the arguments used in the proof of Theorem \ref{thm-interior}, we also have the pointwise upper bound estimates for higher dimensions $d\geq 3$. While, new technique is needed to get the corresponding lower bound estimate for these $C^{1, \, \gamma}$ inclusions.

\begin{theorem}\label{thm-d-geq-3}
For $d\geq3$, let $D_1$, $D_2$  be two convex bounded $C^{1,\gamma}$ subdomains of $D$ with  $\va$ apart. Suppose \eqref{h1h2'}--\eqref{coefficient-bound} hold for $d\geq 3$. Let $u\in H^1(D; \R^d)\cap C^1(\overline{\Omega}; \R^d)$ be the solution of \eqref{lame-system-infty} with $\varphi \in L^\infty(\ptl D; \R^d)$.  Then for sufficiently small $\va >0$,
	\begin{equation}\label{upper-bounds-d3}
	|\nabla u(x',x_{d})| \leq \frac{C}{\va+|x'|^{1+\gamma}}\cdot\|\varphi\|_{L^\infty(\ptl D; \R^d)}, \quad \text{for} ~ (x',x_{d})\in \Omega_{R_{1}},
	\end{equation}
	and
	$$\|\nabla u\|_{L^{\infty}(\Omega\setminus\Omega_{R_{1}})}\leq\,C\|\varphi\|_{L^\infty(\ptl D; \R^2)},$$
	where $C$ is a positive constant independent of $\va$.
\end{theorem}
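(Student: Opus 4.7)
The plan is to follow the same decomposition strategy used to prove Theorem \ref{thm-interior}, now adapted to $d \geq 3$. I would first write the solution of \eqref{lame-system-infty} as
\begin{equation*}
u = \sum_{l=1}^{d(d+1)/2} C_1^l v_1^l + \sum_{l=1}^{d(d+1)/2} C_2^l v_2^l + v_0 \quad \text{in } \Omega,
\end{equation*}
where $v_i^l$ and $v_0$ solve \eqref{equ_v1} and \eqref{equ_v3} respectively. This splits the problem into (i) pointwise gradient estimates for the auxiliary functions $v_i^l$ and $v_0$ in the narrow neck region, and (ii) estimating the $d(d+1)$ free constants $C_i^l$ from the linear system \eqref{equ-decompositon}.

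For step (i), since we cannot invoke the De Giorgi--Nash theory for the vector-valued Lam\'e system, I would follow the approach laid out for $d=2$: combine the $W^{1,p}$ estimates for elliptic systems with right-hand side in divergence form with Campanato's approach. Concretely, for each $(z',0) \in B'_{R_1}$ I would localize in a cylinder of radius comparable to $\delta(z') := \va + h_1(z') - h_2(z') \sim \va + |z'|^{1+\gamma}$, rescale the cylinder to unit size, apply interior/boundary $W^{1,p}$ estimates to the rescaled equation (after subtracting the boundary data $\delta_{ij}\psi^l$, which is rigid), and then unscale. For the translation modes of $\psi^l$ this gives $|\nabla v_i^l(x',x_d)| \leq C/\delta(x')$, while for rotation modes one picks up an extra linear factor of $|x'|$ that is harmless in the final bound. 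Standard boundary regularity away from the touching region yields $\|\nabla v_i^l\|_{L^\infty(\Omega \setminus \Omega_{R_1})} \leq C$ and the analogous bound for $v_0$.

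For step (ii), I would study the matrix $A = (a_{ij}^{lk})$ of \eqref{equ-decompositon}, whose entries are boundary flux integrals $\int_{\ptl D_j}(\ptl v_i^l/\ptl \nu)|_+ \cdot \psi^k$. Using integration by parts in the neck and the sharp gradient estimates from (i), the leading behaviour of each entry reduces to an integral of the form $\int_{|x'| < R_1}|x'|^{\alpha}\,(\va + |x'|^{1+\gamma})^{-1}\,dx'$ for various $\alpha \geq 0$. The decisive difference with $d=2$ is that in dimension $d \geq 3$ the radial integrand $|x'|^{d-2+\alpha}/(\va + |x'|^{1+\gamma})$ is integrable up to $|x'|=0$ (at worst logarithmically, when $d=3$ and $\alpha=0$), so $a_{ii}^{ll}$ stays $O(1)$ rather than blowing up like $\va^{-\gamma/(1+\gamma)}$. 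A careful separation of translation versus rotation blocks, together with Cramer's rule applied to $A$, gives $|C_i^l| \leq C\|\varphi\|_{L^\infty(\ptl D;\R^d)}$ for every $l$. Combining this with the pointwise estimate of step (i) and the trivial bound for $v_0$ then delivers \eqref{upper-bounds-d3} and the outside estimate.

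The main obstacle is carrying out the Campanato iteration cleanly enough that the sharp factor $1/\delta(x')$ (and not just $1/\sqrt{\delta(x')}$ or similar) survives in the final bound; here the rigid-displacement structure $e(v_i^l)=0$ on $\ptl D_i$ must be exploited to subtract the correct constant at each dyadic scale, since for the coupled Lam\'e operator the $W^{1,p}$ estimate on the rescaled cube mixes displacement, stretching, and rotation contributions. The second delicate point is the invertibility of the $d(d+1) \times d(d+1)$ matrix $A$: the translation-block entries and the rotation-block entries have genuinely different magnitudes in $\va$, so one has to track the block structure and show that the dominant diagonal survives, in order to extract a uniform bound on every $C_i^l$.
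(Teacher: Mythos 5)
Your overall plan matches the paper's: decompose $u = \sum C_1^l v_1^l + \sum C_2^l v_2^l + v_0$, establish the pointwise bounds $|\nabla v_i^l| \leq C/(\va + |x'|^{1+\gamma})$ (plus the harmless extra factor $\va+|x'|$ for rotation modes) via the scalar auxiliary functions $\bar u_i^l = \bar u\,\psi^l$, the Campanato-type iteration, and the $W^{1,p}$/$C^{1,\gamma}$ machinery, and combine with $|C_i^l|\le C$. For $d\geq 3$ no cancellation $|C_1^l - C_2^l|$ is exploited, so the pointwise bound with $|C_i^l|\le C$ alone already gives \eqref{upper-bounds-d3}, and the outside estimate is standard elliptic regularity away from the neck.

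Where you diverge is in how $|C_i^l| \leq C$ is obtained. The paper's Proposition \ref{lem-rest-terms}(i) gets it cheaply by a trace-theorem argument (a minor modification of lemma 4.1 in \cite{bll}): the total energy of $u$ is bounded, hence so is the trace of $u$ on $\partial D_i$, and since $u|_{\partial D_i} = \sum_l C_i^l \psi^l$ with the $\psi^l$ linearly independent on $\partial D_i$, the constants are bounded. This works uniformly in $d$ and avoids any discussion of the flux matrix. You instead re-derive the bound through the Gram-type matrix $A=(a_{ij}^{kl})$ and Cramer's rule, observing that for $d\geq 3$ the radial integrals $\int_0^{R_1} r^{d-2+\alpha}/(\va+r^{1+\gamma})\,dr$ converge, so the entries stay $O(1)$. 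That observation is correct (in fact for $d=3$, $\alpha=0$, $\gamma<1$ the integral converges without even a logarithm), but to close the Cramer argument you still need a uniform lower bound on $\det A$ as $\va\to 0$, i.e. that $A$ converges to an invertible limit, which is precisely the delicate point you flag and which the paper's trace argument sidesteps. Also note that the object one subtracts before rescaling is the profile $\bar u_i^l = \bar u\,\psi^l$ (scalar cut-off times rigid field), not the rigid displacement $\psi^l$ itself; subtracting a pure rigid motion would not tame the singular part in the neck. Both routes arrive at the same conclusion; the paper's is shorter.
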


This paper is organized as follows. In Section \ref{sec-2}, we first introduce a  scalar auxiliary function $\bar{u}$ to generate a family of vector-valued auxiliary functions, whose gradients will be proved to be the major singular terms of $|\nabla v_{i}^{l}|$, see Proposition \ref{prop-interior} below. Since $\partial D_{1}$ and $\partial D_{2}$ are of $C^{1,\gamma}$, in order to prove Proposition \ref{prop-interior}, we need to establish the $C^{1,\gamma}$ estimates and the $W^{1, p}$ estimates for elliptic systems with right hand side in divergence form, with partially zero Dirichlet boundary data, see Theorems \ref{lem-global-C1alp-estimates} and \ref{lem-lp-esti}.  Using them to replace the $W^{2,p}$ estimates argument used in \cite{bll}, we adapt the iteration process used in \cite{bll,bll2,llby} and obtain the estimates of $|\nabla v_i^l|$. The estimates in Proposition \ref{prop-interior} for dimension two are proved in Section \ref{sec3}, for higher dimensions in Section \ref{sec4}.
In section \ref{sub-lower-bounds}, we mainly deal with the convergence of the free constants $C_i^l$, and give  the proof of Theorem \ref{thm-lower-bound} to show the blow-up rate $\va^{-1/(1+\gamma)}$ is optimal. Section \ref{sec-asymp} is devoted to the proof of Theorem \ref{thm-asymptotics}, and the asymptotic expansions of $\nabla u$ near the origin is obtained. The proofs of  Theorems \ref{lem-global-C1alp-estimates} and \ref{lem-lp-esti} are provided  in Section \ref{sec-c1-alp} for readers' convenience. 

\vspace{0.5cm}

\section{Main ingredients for the proof of Theorems \ref{thm-interior} and \ref{thm-d-geq-3} }\label{sec-2}

In this section, we shall list some main ingredients to prove Theorems \ref{thm-interior} and \ref{thm-d-geq-3}. Recall that the linear space  of rigid displacement in $\R^d$, $\Psi$, is spanned by
\begin{equation*}
\Big\{e_i, \, x_je_k-x_ke_j~:~1\leq i \leq d, 1\leq j < k \leq d \Big\},
\end{equation*}
where $e_1, \, e_2, \cdots,\, e_d$ denote the standard basis of $\R^d$. 
By decomposition \eqref{decomposition-u}, we write
\begin{align}\label{decomposition_u2}
\nabla{u}&=\sum\limits_{l=1}^{\frac{d(d+1)}{2}} \big( C_1^l \nabla v_1^l + C_2^l \nabla v_2^l\big)+\nabla v_0 \notag  \\
&=\sum\limits_{l=1}^d(C_{1}^l-C_{2}^l)\nabla{v}_{1}^l+\sum\limits_{l=1}^d C_2^l \nabla(v_1^l+v_2^l) +\sum\limits_{l=d+1}^{\frac{d(d+1)}{2}}\sum\limits_{i=1}^2C_i^l\nabla{v}_{i}^l+\nabla v_0,\qquad~\text{in}~\Omega.
\end{align}
Thus, in order to prove Theorem \ref{thm-interior}, it suffices to estimate each term in \eqref{decomposition_u2}. Without loss of generality, we assume that $\|\varphi\|_{L^\infty(\ptl D)}=1$, by considering $u/\|\varphi\|_{L^\infty(\ptl D)}$ if $\|\varphi\|_{L^\infty(\ptl D)}>0$. If $\varphi|_{\ptl D}=0$, then $u \equiv 0$.
\subsection{Auxiliary functions}
To estimate $|\nabla v_i^l|$, $i=1, 2$, $l=1, 2, \cdots, \frac{d(d+1)}{2}$, we introduce a scalar auxiliary function $\bar{u}\in{C}^{1, \gamma}(\mathbb{R}^{d})$ as in \cite{bll}, such that $\bar{u}=1$ on $\partial{D}_{1}$, $\bar{u}=0$ on $\partial{D}_{2}\cup\partial D$,
\begin{align}\label{ubar}
\bar{u}(x)
=\frac{x_{d}-h_{2}(x')+\va/2}{\varepsilon+h_{1}(x')-h_{2}(x')},\ \ \qquad~ x\in\,\Omega_{2R_{1}},
\end{align}
here $h_1$, $h_2\in C^{1,\, \gamma}(B_{2R_1}(0'))$, and
\begin{equation}\label{ubar-out}
\|\bar{u}\|_{C^{1,\gamma}(\R^d\setminus \Omega_{R_{1}})}\leq\,C.
\end{equation}

 Denoting $\ptl_j:=\ptl/\ptl x_j$ and in view of \eqref{h1h1}, \eqref{h1h_convex2},
a direct calculation yields that
\begin{equation}\label{nablau_bar-interior}
\left|\partial_{j}\bar{u}(x)\right|\leq\frac{C|x'|^{\gamma}}{\varepsilon+|x'|^{1+\gamma}},\quad j=1, \cdots, d-1, \quad \partial_{d}\bar{u}(x)=\frac{1}{\delta(x')},  ~\mbox{for}~ x\in \Omega_{R_1},
\end{equation}
where
\begin{equation}\label{delta_x}
\delta(x'):=\varepsilon+h_1(x')-h_2(x').
\end{equation}
Use $\bar{u}$ to define a family of vector-valued auxiliary functions
\begin{equation}\label{def-function1}
\bar{u}_1^l=\bar{u}\psi^l, \quad l=1, \, 2,\cdots, \frac{d(d+1)}{2}, \quad \text{in} ~~ \Omega,
\end{equation}
then $\bar{u}_1^l=v_1^l$ on $\ptl \Omega$. Similarly, we define 
\begin{equation}\label{def-function2}
\bar{u}_2^l=\underline{u}\psi^l,\quad l=1, \, 2,\cdots, \frac{d(d+1)}{2}, \quad  \text{in} ~~ \Omega,
\end{equation}
where $\underline{u}\in C^{1,\gamma}(\R^d)$ satisfies $\underline{u}=1$ on $\ptl D_2$, $\underline{u}=0$ on $\ptl D_1\cup \ptl D$, 
\begin{equation*}
\underline{u}=1-\bar{u},\quad\mbox{in}~~ \Omega_{2R_1},
\end{equation*}
and $\|\underline{u}\|_{C^{1,\gamma}(\R^d\setminus \Omega_{R_1})}\leq C$. We shall prove that $|\nabla\bar{u}_i^l|$ are the main singular terms of $|\nabla v_i^l|$ near the origin.

\subsection{Estimates of $|\nabla v_i^l|$}
Set
$$w_i^l:=v_i^l-\bar{u}_{i}^l,\quad l=1,\,2,\cdots, \frac{d(d+1)}{2}, ~~ i=1,\,2.$$
Then
\begin{prop}\label{prop-interior}
Let $v_i^l,\, v_0\in  C^1(\overline{\Omega}; \R^d)$ be the solutions of \eqref{equ_v1} and \eqref{equ_v3}, respectively. Then under the hypotheses of Theorems \ref{thm-interior} and \ref{thm-d-geq-3}, for sufficiently small $\va >0$, we have  
\begin{itemize}
	\item[(i)] for $l=1,\,2, \cdots, d$, $i=1,\,2$, 
	\begin{equation}\label{nabla-w-i0}
	|\nabla w_i^l(x)|\leq \frac{C}{(\va+|x_1|^{1+\gamma})^{\frac{1}{1+\gamma}}},\quad  x\in \Omega_{ R_1}; 
	\end{equation}
consequently,
	\begin{equation}\label{nabla-v-i0}
	\frac{1}{C(\va +|x_1|^{1+\gamma})}\leq|\nabla v_{i}^l(x)|\leq \frac{C}{\va +|x_1|^{1+\gamma}},\quad  x\in \Omega_{ R_1};
	\end{equation}
	
	\item[(ii)] for $l=d+1, \cdots, \frac{d(d+1)}{2}$, $i=1,\,2$, 
	\begin{equation}\label{nabla-w-i3}
	|\nabla w_i^l (x)|\leq C,\quad  x\in \Omega_{ R_1};
	\end{equation}
consequently,
	\begin{equation}\label{nabla-v-i3}
	|\nabla v_{i}^l(x)|\leq \frac{C(\va +|x'|)}{\va +|x'|^{1+\gamma}},\quad  x\in \Omega_{ R_1};
	\end{equation}	
	\item[(iii)] for $l=1,\, 2, \cdots,\, \frac{d(d+1)}{2}$,
	\begin{equation}\label{v1+v2_bounded1}
	\|\nabla (v_{1}^l+ v_{2}^l)\|_{L^\infty(\Omega)}\leq C,\quad \text{and}\quad 	\|\nabla{v}_0\|_{L^\infty(\Omega)}\leq C\|\varphi\|_{L^{\infty}(\partial D)}; 
	\end{equation}
	\item[(iv)] 	for $l=1,\,2,\, \cdots, \frac{d(d+1)}{2}$, $i=1, \,2$,
	\begin{equation}\label{esti-infty-out-12}
	\|\nabla v_i^l\|_{L^\infty(\Omega\setminus \Omega_{R_1})} \leq C,
	\end{equation}
\end{itemize}
	where $C$ is a {\it universal constant}. 
\end{prop}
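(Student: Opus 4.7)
The plan is to extend the energy-iteration scheme of Bao--Li--Li \cite{bll,bll2} to the $C^{1,\gamma}$ regime by replacing the $W^{2,p}$ bootstrap available for $C^{2,\gamma}$ inclusions with the $W^{1,p}$ theory for elliptic systems with divergence-form right-hand side (Theorem~\ref{lem-lp-esti}). The auxiliary functions $\bar u_i^l=\bar u\,\psi^l$ and $\underline u\,\psi^l$ match $v_i^l$ exactly on $\partial\Omega$, so the remainder $w_i^l:=v_i^l-\bar u_i^l$ vanishes on $\partial D_1\cup\partial D_2\cup\partial D$ and satisfies $\mathcal L_{\lambda,\mu}w_i^l=-\mathcal L_{\lambda,\mu}\bar u_i^l$ in $\Omega$. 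From \eqref{nablau_bar-interior} one reads off the crucial dichotomy: for the translation generators ($l\le d$, $\psi^l\equiv e_l$) the forcing is of size $1/\delta(x')\sim 1/(\va+|x'|^{1+\gamma})$, while for the rotation generators ($l>d$) the extra factor $\psi^l$ vanishing at the origin buys a factor $|x'|$, which is exactly the gap between \eqref{nabla-w-i0} and \eqref{nabla-w-i3}.

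The first technical step is a local energy estimate for $w_i^l$ on the thin cylinder $\Omega_t$: test the system against $\eta^2 w_i^l$ with a cutoff $\eta$ supported in $\Omega_{2t}$, apply the second Korn inequality (legitimate because $w_i^l|_{\partial D_1\cup\partial D_2}=0$), and absorb strain terms using the strong convexity \eqref{coefficient-bound}. Iterating this Caccioppoli-type inequality along a geometric sequence of scales $r_{k+1}=r_k-c\,\delta(r_k)$ down to $r_k\sim\va^{1/(1+\gamma)}$, in the spirit of \cite{llby,bll,bll2}, yields a sharp scale-dependent bound
$$
\int_{\Omega_t}|\nabla w_i^l|^2\,dx\;\lesssim\;\Phi_l(t,\va),
$$
where $\Phi_l$ carries the $|x'|^2$ gain for $l>d$ and none for $l\le d$.

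To convert this integral bound into the pointwise estimates \eqref{nabla-w-i0}, \eqref{nabla-w-i3}, fix $z\in\Omega_{R_1}$, set $\delta:=\delta(z')\sim\va+|z'|^{1+\gamma}$, and rescale $w_i^l$ from the cylinder of horizontal radius $\delta^{1/(1+\gamma)}$ and vertical height $\delta$ around $z$ to a unit region $\widehat\Omega$ whose upper and lower boundaries are $C^{1,\gamma}$ graphs with seminorms uniformly controlled by $\kappa_2$ (this calibration is dictated by \eqref{h1h_convex2}). The rescaled function $\widehat w_i^l$ solves a Lam\'e system on $\widehat\Omega$ with rescaled divergence-form forcing controlled in $L^p$; Theorem~\ref{lem-lp-esti} combined with Sobolev embedding and the Campanato characterization of H\"older continuity converts the energy bound into $\|\nabla\widehat w_i^l\|_{L^\infty}\lesssim\|\nabla\widehat w_i^l\|_{L^2(\widehat\Omega)}+\text{(forcing)}$, and undoing the rescaling produces the claimed pointwise bounds. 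The main obstacle is precisely this rescaling: one must choose the rescaling so that the rescaled $C^{1,\gamma}$ geometry and the rescaled forcing are simultaneously controlled uniformly in $\va$, which is where the divergence-form $W^{1,p}$ estimate (rather than a nonexistent $W^{2,p}$ estimate) is indispensable.

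For the remaining items: the lower bound in \eqref{nabla-v-i0} follows by the triangle inequality $|\nabla v_i^l|\ge|\nabla\bar u_i^l|-|\nabla w_i^l|$, since the main singular rate $(\va+|x_1|^{1+\gamma})^{-1}$ strictly dominates the remainder rate $(\va+|x_1|^{1+\gamma})^{-1/(1+\gamma)}$ once $\va$ is sufficiently small. Estimate (iii) uses the symmetric cancellation: $V:=v_1^l+v_2^l-\psi^l$ vanishes on $\partial D_1\cup\partial D_2$ and satisfies $\mathcal L_{\lambda,\mu}V=0$ in $\Omega$ with smooth data $-\psi^l$ on $\partial D$ (since $e(\psi^l)=0$), so the same rescaling-plus-$W^{1,p}$ argument, now with no singular forcing, delivers the uniform $L^\infty$ bound; the estimate for $v_0$ is analogous and even easier. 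Finally, \eqref{esti-infty-out-12} is an immediate consequence of the global $C^{1,\gamma}$ estimate of Theorem~\ref{lem-global-C1alp-estimates} applied on $\Omega\setminus\Omega_{R_1}$, whose geometry is uniformly regular in $\va$.
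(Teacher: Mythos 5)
Your high-level plan---decompose $v_i^l=\bar u_i^l+w_i^l$, run an energy iteration to get local energy decay, then rescale to a near-unit domain and apply the $W^{1,p}$ and $C^{1,\gamma}$ estimates---matches the paper's strategy. But there is a genuine gap in the iteration step, and the rescaling geometry you describe is incorrect.

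The crucial device you omit is the subtraction of a constant matrix $\mathcal{M}$ from the forcing: the paper rewrites $-\mathcal{L}_{\lam,\mu}w=\nabla\cdot\bigl(\mathbb{C}^0e(\bar u_i^l)-\mathcal{M}\bigr)$ and, in the Caccioppoli inequality on $\widehat\Omega_s(z_1)$, takes $\mathcal{M}$ to be the average of $\mathbb{C}^0e(\bar u_i^l)$ over $\widehat\Omega_s(z_1)$. Without this Campanato-type subtraction, the inhomogeneous term in the local energy inequality is $\int_{\widehat\Omega_s(z_1)}|\mathbb{C}^0e(\bar u_i^l)|^2\,\eta^2\,dx$, which is of order $s/\delta(z_1)$ (area $\sim s\,\delta$, integrand $\sim\delta^{-2}$) --- this is $\gtrsim 1$ at the scale $s\sim\delta(z_1)$, far too large to yield the target local energy decay $\int_{\widehat\Omega_{\delta(z_1)}(z_1)}|\nabla w|^2\lesssim\delta(z_1)^{2\gamma/(1+\gamma)}$. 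With the subtraction, one instead bounds $|\mathbb{C}^0e(\bar u_i^l)-\mathcal{M}|$ by $[\nabla\bar u]_{\gamma,\widehat\Omega_s(z_1)}\cdot(s^\gamma+\delta^\gamma)$, and the key H\"older seminorm estimate $[\nabla\bar u]_{\gamma,\widehat\Omega_s(z_1)}\lesssim\delta(z_1)^{-1-1/(1+\gamma)}s^{1-\gamma}+\delta(z_1)^{-1-\gamma^2/(1+\gamma)}$ supplies exactly the gain needed to close the iteration. Your proposal treats the forcing as merely ``controlled in $L^p$'' in the rescaled problem, but the rescaled $C^{1,\gamma}$ estimate of Theorem~\ref{lem-global-C1alp-estimates} (and the crucial local energy decay before it) really requires the $C^\gamma$-seminorm control that the $\mathcal{M}$-subtraction makes usable. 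In a $C^{2,\gamma}$ setting this step is unnecessary (one can apply $W^{2,p}$ estimates directly); your proposal, which iterates ``in the spirit of [llby,bll,bll2]'' without the subtraction, does not transfer to $C^{1,\gamma}$ --- this is exactly the ``main difference'' the paper flags when writing \eqref{equ-w-matrix}.

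Two secondary points. First, the rescaling you describe (horizontal radius $\delta^{1/(1+\gamma)}$, vertical height $\delta$) is anisotropic and would map $\widehat\Omega_{\delta^{1/(1+\gamma)}}(z_1)$ to a domain of horizontal extent $\sim\delta^{1/(1+\gamma)-1}\to\infty$, not to a unit domain; the paper rescales isotropically by $\delta(z_1)$ on the cylinder $\widehat\Omega_{\delta(z_1)}(z_1)$ of horizontal radius $\delta(z_1)$, which is what makes the target a near-unit $C^{1,\gamma}$ domain with uniformly controlled seminorms. Second, for item (iv) a direct appeal to Theorem~\ref{lem-global-C1alp-estimates} is not ``immediate'': $v_i^l$ has nonzero (if constant) boundary data, and the $C^{1,\gamma}$ estimate gives a bound in terms of $\|\cdot\|_{L^\infty}$, which still needs to be controlled. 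The paper first establishes $\int_{\Omega\setminus\Omega_{R_1/2}}|\nabla v_i^l|^2\le C$ from the total energy bound for $w_i^l$ plus $\|\nabla\bar u_i^l\|_{L^\infty(\Omega\setminus\Omega_{R_1})}\le C$, and then invokes classical elliptic estimates; your argument skips that initial $L^2$ control. (Also, you want the first Korn inequality, valid because $w_i^l$ vanishes on $\partial\Omega$, not the second.)
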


\begin{remark}
Since $h_{1}$ and $h_{2}$ here are of $C^{1,\gamma}$, now $\bar{u}$ and $\underline{u}$ are not twice continuously differentiable any more. Thus, $w_i^l$ satisfies the following system
$$-\mathcal{L}_{\lam,\mu} w_i^l=\nabla\cdot(\mathbb{C}^0e(\bar{u}_i^l)),$$
 with right hand side in divergence form.
Hence, we are not able to directly follow the iteration approach used in \cite{bll,llby} and to apply $W^{2,\, p }$ estimates to get the estimates of $w_i^l$. To overcome this difficulty, we here turn to the $C^{1, \gamma}$ estimates and the $W^{1,p}$ estimates  for elliptic system.
\end{remark}

 \subsection{$C^{1, \gamma}$ estimates and $W^{1, p}$ estimates}
We adapt the $C^{1,\gamma}$ estimates and the $W^{1, p}$ estimates in \cite{gm}  to our setting with partially zero boundary condition, which can be regarded as an analogue of theorem 9.13 in \cite{gt}  and are of independent interest. 

 Recall some properties of tensor $\mathbb{C}^0$.  For the isotropic elastic material, the components $C_{ijkl}$ satisfy symmetric condition:
\begin{equation*}
C_{ijkl}=C_{klij}=C_{klji},\quad i,\,j,\,k,\,l=1,\,2,\cdots, d.
\end{equation*} 
Thus, $\mathbb{C}^0$ satisfies the ellipticity condition: for every $d\times d$ real symmetric matrix $A=(A_{ij})$,
\begin{equation}\label{elliptic}
\min\{2\mu, \, d\lam +2\mu\}|A|^2\leq (\mathbb{C}^0 A, A)\leq \max\{2\mu, d\lam +2\mu\}|A|^2,
\end{equation}
where $|A|^2=\sum\limits_{i,j}A_{ij}^2$.

Let $Q$ be a bounded domain in $\R^d$, $d\geq 2$, with a $C^{1,\gamma}$ boundary portion $\Gamma \subset \ptl Q$. We consider the following boundary value problem
\begin{equation}\label{equ-w-divf-q1}
\left\{ \begin{aligned}
-\ptl_j\big(C_{ijkl}\ptl_l \widetilde{w}^{(k)} \big) &=\ptl_j \widetilde{f}_{ij} \quad &\text{in}\quad Q, \\
\widetilde{w}&=0 \quad &\text{on} \quad \Gamma,
\end{aligned}\right.
\end{equation}
where $\widetilde{f}_{ij} \in C^{\gamma}(Q)$,  $0<\gamma<1$, $i,\,j, \, k,\, l=1,\, 2,\, \cdots, d$, and the Einstein summation convention in repeated indices is used.
\begin{theorem}[$C^{1,\gamma}$ estimates]\label{lem-global-C1alp-estimates}
Let $\widetilde{w} \in{H}^{1}(Q; \R^d)\cap{C}^{1}(Q \cup \Gamma; \R^d)$ be the solution of \eqref{equ-w-divf-q1}.
Then for any domain $Q^\prime \subset\subset Q \cup \Gamma$,
\begin{equation}\label{ineq-global-C1alp-estimates}
\|   \widetilde{w} \|_{C^{1, \, \gamma}(Q^\prime)} \leq C\left( \|  \widetilde{w} \|_{L^\infty( Q)}+[\widetilde{F}]_{\gamma,\, Q}\right),
\end{equation}
where $\widetilde{F}:=( \widetilde{f}_{ij} )$ and $C=C(n, \gamma, Q^\prime, Q)$.
\end{theorem}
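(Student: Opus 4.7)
\textbf{Plan for proving Theorem~\ref{lem-global-C1alp-estimates}.} The proof will follow Campanato's approach to Schauder-type estimates, adapted to elliptic systems in divergence form and to the partially prescribed Dirichlet boundary condition. The strategy is to derive an $L^2$ decay estimate for $\nabla\widetilde w - (\nabla\widetilde w)_{B_\rho}$ of order $\rho^{d+2\gamma}$, which by the Campanato--Meyers characterization of Hölder spaces upgrades to $\nabla\widetilde w\in C^{\,0,\gamma}$. The $L^\infty$ norm of $\widetilde w$ enters through a Caccioppoli-type inequality to control the base $L^2$ energy of $\nabla\widetilde w$.

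First, I would reduce to two model configurations by a finite covering of $\overline{Q'}$: interior balls $B_{2\rho}(y_0)\subset Q$ and boundary half-balls near $\Gamma$. In the boundary case I would flatten $\Gamma$ via a $C^{1,\gamma}$ diffeomorphism $\Phi$ so that a neighborhood of the base point becomes $B_R^+$ with $\Phi(\Gamma\cap U)=B_R'\times\{0\}$. Under this change of variables the constant-coefficient system $-\partial_j(C_{ijkl}\partial_l\widetilde w^{(k)})=\partial_j\widetilde f_{ij}$ is transformed into
\begin{equation*}
-\partial_j\!\left(a_{ijkl}(y)\,\partial_l w^{(k)}\right)=\partial_j g_{ij}(y),\qquad w=0\ \ \text{on }\{y_d=0\},
\end{equation*}
where the new coefficients $a_{ijkl}$ inherit $C^{0,\gamma}$ regularity (since $\nabla\Phi\in C^{0,\gamma}$) and the strong ellipticity in \eqref{elliptic}, and where $g_{ij}\in C^{0,\gamma}$ with $[g]_\gamma\lesssim[\widetilde F]_\gamma$.

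Next, fix $y_0\in\overline{B_{R/2}^+}$ and a small scale $\rho$, and introduce the comparison function $h$ solving the frozen-coefficient system $-\partial_j(a_{ijkl}(y_0)\partial_l h^{(k)})=0$ on $B_\rho(y_0)$ (resp.\ on the half-ball $B_\rho^+(y_0)$ when $y_{0,d}<\rho$, with $h=0$ on the flat part) and $h=w$ on the remaining boundary. For $h$, classical interior/boundary estimates for constant-coefficient elliptic systems with zero Dirichlet data on a hyperplane give the hallmark decay
\begin{equation*}
\int_{B_{\tau\rho}(y_0)}\bigl|\nabla h-(\nabla h)_{B_{\tau\rho}}\bigr|^2\,dy\ \le\ C\,\tau^{\,d+2}\int_{B_\rho(y_0)}\bigl|\nabla h-(\nabla h)_{B_\rho}\bigr|^2\,dy,\qquad 0<\tau<1/2.
\end{equation*}
For the remainder $v:=w-h$, which vanishes on $\partial B_\rho(y_0)$ (and on the flat portion of $\partial B_\rho^+(y_0)$), testing its equation against $v$ itself and exploiting the Hölder modulus of $a_{ijkl}$ yields the perturbation estimate
\begin{equation*}
\int_{B_\rho(y_0)}|\nabla v|^2\,dy\ \le\ C\rho^{2\gamma}\!\left(\int_{B_\rho(y_0)}|\nabla w|^2\,dy+[g]_\gamma^{\,2}\,\rho^{\,d}\right).
\end{equation*}
Combining the two, writing $w=h+v$, and absorbing cross terms produces the Campanato decay
\begin{equation*}
\Phi(\rho):=\int_{B_\rho(y_0)}\bigl|\nabla w-(\nabla w)_{B_\rho}\bigr|^2\,dy\ \le\ C\Bigl[\tau^{\,d+2}+\tau^{-d}\rho^{2\gamma}\Bigr]\Phi(\rho)+C\rho^{\,d+2\gamma}.
\end{equation*}
Choosing $\tau$ small and then $\rho$ small, a standard iteration lemma (e.g.\ Giaquinta's) upgrades this to $\Phi(\rho)\le C\rho^{\,d+2\gamma}$, uniformly in $y_0$.

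Finally, to start the iteration I would invoke a Caccioppoli inequality up to $\Gamma$ (using the zero boundary values there, which allow a Sobolev--Poincaré inequality on truncated balls) to bound the initial energy $\int_{B_R^+}|\nabla w|^2$ by $\|w\|_{L^\infty(Q)}^2+[\widetilde F]_\gamma^{\,2}$. The Campanato--Meyers isomorphism then yields $\nabla w\in C^{\,0,\gamma}(Q')$ with the quantitative bound \eqref{ineq-global-C1alp-estimates}, and pulling back through $\Phi^{-1}$ recovers the statement on $Q'$. The main obstacle is the combined presence of variable $C^{0,\gamma}$ coefficients (produced by the flattening of a merely $C^{1,\gamma}$ boundary) and a divergence-form right-hand side with only Hölder regularity: this rules out a direct $W^{2,p}$ argument and forces the comparison to be made at the level of $\nabla w$ rather than $w$ itself, so care is needed in the choice of test function and in balancing the exponents during iteration. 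Handling the partial Dirichlet condition on $\Gamma$ — ensuring $v$ vanishes on the flat part in order to use Poincaré — is the other delicate point, resolved by taking $Q'\subset\subset Q\cup\Gamma$ so that we never approach $\partial Q\setminus\Gamma$.
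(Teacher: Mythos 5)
Your proposal is correct and follows essentially the same route as the paper: flatten the $C^{1,\gamma}$ boundary portion by a diffeomorphism (turning the constant-coefficient system into one with $C^\gamma$ coefficients), freeze the coefficients, invoke the Campanato decay supplied by the constant-coefficient theory, absorb the perturbation coming from the coefficient modulus, and conclude via a finite covering and the Campanato--H\"older equivalence. The only difference is in packaging: the paper cites Theorem 5.14 of Giaquinta--Martinazzi as a black box and absorbs the perturbation term at the level of the H\"older seminorm $[\nabla\mathcal{W}]_{\gamma}$ through an interpolation inequality, whereas you unpack the comparison-function/Campanato-iteration machinery explicitly (with minor notational sloppiness in the displayed decay inequality, whose left side should be $\Phi(\tau\rho)$ and which requires a preliminary lower-order decay for $\int|\nabla w|^2$); both are standard and equivalent presentations of the same argument.
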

The H\"{o}lder semi-norm of matrix-valued function $\widetilde{F}=(\widetilde{f}_{ij})$ is defined as follows:
\begin{equation}\label{def-nablaU-alp}
[\widetilde{F}]_{\gamma,\, Q}:=\max\limits_{1\leq i,\, j \leq d}[\widetilde{f}_{ij}]_{\gamma, \, Q}\quad \text{and} \quad [\widetilde{f}_{ij}]_{\gamma, \, Q}:=\sup_{\substack{x, \, y\in Q \\ x\neq y}}\frac{|\widetilde{f}_{ij}(x)-\widetilde{f}_{ij}(y)|}{|x-y|^\gamma}.
\end{equation}

For the perfect conductivity problem with $C^{1, \gamma}$ inclusions, we adapted the famous De Giorgi-Nash approach or Moser's iteration to get the $L^\infty$ estimates of $u$ in \cite{clx}. But for Lam\'{e} system, these approaches are unable to be applied. Here, we need the following $W^{1, p}$  estimates for boundary value problem \eqref{equ-w-divf-q1}.

\begin{theorem}[$W^{1,p}$ estimates]\label{lem-lp-esti}
Let $Q$ and $\Gamma$ be defined as in Theorem \ref{lem-global-C1alp-estimates} for $d \geq 2$. Let $\widetilde{w}\in H^1(Q; \R^d)$ be the weak solution of \eqref{equ-w-divf-q1} with $\widetilde{f}_{ij}\in C^\gamma(Q)$, $0<\gamma<1$ and $i, \,j=1,\, 2,\cdots, d$. Then, for any $2\leq p<\infty$ and any domain $Q^\prime \subset\subset Q \cup \Gamma$, 
\begin{equation}\label{ineq-w1p}
\|\widetilde{w}\|_{W^{1, p}(Q^\prime)}\leq C(\|\widetilde{w}\|_{H^1(Q)}+\|\widetilde{F}\|_{L^p(Q)}),
\end{equation}
where $C=C(\lam, \mu, p, Q', Q)$ and $\widetilde{F}:=( \widetilde{f}_{ij} )$. In particular, if $p> d$, it holds that
\begin{equation}\label{ineq-c-alp}
\|\widetilde{w}\|_{C^{\tau}(Q')}\leq C(\|\widetilde{w}\|_{H^1(Q)}+[\widetilde{F}]_{\gamma, Q}),
\end{equation}
where $0<\tau\leq 1-d/p$ and $C=C(\lam, \mu, \tau, p, Q', Q)$.
\end{theorem}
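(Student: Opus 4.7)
The plan is to reduce \eqref{ineq-w1p} to the classical Calderón--Zygmund $W^{1,p}$ theory for the constant-coefficient Lamé system via a localization and boundary-flattening argument; the Hölder bound \eqref{ineq-c-alp} will then follow from Morrey's embedding.

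First, I would settle the base case $p=2$. Choosing a cutoff $\eta\in C_c^\infty(Q\cup\Gamma)$ with $\eta\equiv 1$ on $Q'$ and testing the weak form of \eqref{equ-w-divf-q1} against $\eta^2\widetilde{w}$, the zero Dirichlet condition on $\Gamma$ kills the boundary contributions, so ellipticity \eqref{elliptic} combined with Korn's inequality gives the energy estimate $\|\widetilde{w}\|_{H^1(Q')}\le C(\|\widetilde{w}\|_{H^1(Q)}+\|\widetilde{F}\|_{L^2(Q)})$. For $p>2$, I would cover $\overline{Q'}$ by finitely many small balls and treat interior and boundary balls separately. On an interior ball $B_r\subset\subset Q$, since the coefficients $C_{ijkl}$ are constant, extending $\eta\widetilde{w}$ by zero and convolving with the Kelvin matrix (the fundamental solution of $\mathcal{L}_{\lam,\mu}$, a classical Calderón--Zygmund kernel of order $-1$) delivers an interior $W^{1,p}$ bound; the commutator terms arising from $\mathcal{L}_{\lam,\mu}(\eta\widetilde{w})-\eta\mathcal{L}_{\lam,\mu}(\widetilde{w})$ generate only lower-order contributions of the form $\|\nabla\eta\cdot\widetilde{w}\|_{L^p}$ and $\|\widetilde{w}\|_{L^p}$, handled by a finite bootstrap over Sobolev exponents starting from the $H^1$ estimate. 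On a ball centered at $x_0\in\Gamma$, the $C^{1,\gamma}$ regularity of $\Gamma$ allows me to flatten the boundary by a $C^{1,\gamma}$ diffeomorphism, after which the transformed system reads
\[
-\partial_j\bigl(\widetilde{C}_{ijkl}(y)\,\partial_l\tilde v^{(k)}\bigr)=\partial_j g_{ij}(y)\quad\text{on }B_r^+,
\]
with $\tilde v=0$ on $\{y_d=0\}$ and $\widetilde{C}_{ijkl}\in C^{0,\gamma}$. Freezing the coefficients at $y=0$, moving the $C^{0,\gamma}$-oscillation to the right-hand side, and applying the half-space $W^{1,p}$ theory for the constant-coefficient Lamé system with zero Dirichlet data (inherited from the whole-space theory via odd/even reflection across $\{y_d=0\}$ of the tangential/normal components) yields the boundary estimate, provided $r$ is small enough that the oscillation is absorbed.

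The main obstacle is the low boundary regularity: since $\Gamma$ is merely $C^{1,\gamma}$, the flattened coefficients are only Hölder continuous, so the perturbation error from freezing is of order $r^\gamma\|\nabla\tilde v\|_{L^p(B_r^+)}$ rather than $r\|\nabla\tilde v\|_{L^p(B_r^+)}$. Choosing $r$ sufficiently small, depending only on $\gamma$, the ellipticity constant, and the $C^{1,\gamma}$ norm of $\Gamma$, this error can be absorbed into the left-hand side, and iterating over the finite covering while summing via the partition of unity, using the $p=2$ bound to dispose of the $H^1$ remainder, produces \eqref{ineq-w1p}. For the second claim, Morrey's embedding $W^{1,p}(Q')\hookrightarrow C^{0,1-d/p}(Q')$ for $p>d$ gives the $C^\tau$ bound; the appearance of $[\widetilde{F}]_{\gamma,Q}$ rather than $\|\widetilde{F}\|_{L^p(Q)}$ on the right-hand side comes from observing that $\widetilde{F}$ can be replaced by $\widetilde{F}-\widetilde{F}(x_0)$ for any fixed $x_0\in Q$ without altering the equation, while $\|\widetilde{F}-\widetilde{F}(x_0)\|_{L^p(Q)}\le C\,[\widetilde{F}]_{\gamma,Q}$.
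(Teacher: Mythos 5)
Your overall strategy — localize, bootstrap over Sobolev exponents, flatten the boundary by a $C^{1,\gamma}$ diffeomorphism, freeze the resulting $C^{0,\gamma}$ coefficients and absorb the oscillation for small radii, then deduce \eqref{ineq-c-alp} via Morrey's embedding together with the observation that $\widetilde{F}$ may be replaced by $\widetilde{F}-\widetilde{F}(x_0)$ — matches the paper's plan, and your interior treatment via convolution of $\eta\widetilde{w}$ against the Kelvin tensor is an acceptable alternative to the paper's device (which instead introduces an auxiliary Poisson solve $-\Delta v^{(i)}=G_i$ so that the commutator terms become a pure divergence and theorem 7.1 of \cite{gm} applies directly).

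However, the single step you gave in any detail for the boundary model problem is incorrect. You assert that the half-space $W^{1,p}$ estimate for the constant-coefficient Lam\'{e} system with homogeneous Dirichlet data on $\{y_d=0\}$ is ``inherited from the whole-space theory via odd/even reflection across $\{y_d=0\}$ of the tangential/normal components.'' This reflection does not close for the Lam\'{e} operator. Writing $T$ for the reflection that is odd in the tangential components and even in the normal component, one checks that $T$ does commute with $\mathcal{L}_{\lam,\mu}=\mu\Delta+(\lam+\mu)\nabla\nabla\cdot$, so a $T$-invariant solution of the whole-space problem exists; but $T$-invariance only forces the \emph{tangential} components to vanish on $\{y_d=0\}$ — the normal component is merely even and there is no reason for it to vanish there. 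Thus this reflection produces the mixed condition $u'=0$, $\partial_d u^{(d)}=0$, not the pure Dirichlet condition you need. Conversely, reflecting all components oddly does enforce $u=0$ on $\{y_d=0\}$, but that reflection fails to commute with $\nabla\nabla\cdot$: the divergence of the odd extension is $-\sum_{k<d}\partial_k u^{(k)}+\partial_d u^{(d)}$, which is not $\pm\nabla\cdot u$. In short, for the Dirichlet Lam\'{e} problem (unlike the scalar Laplacian) there is no elementary reflection that reduces the half-space theory to the whole-space theory, because the coupling of components in $\nabla\nabla\cdot u$ is incompatible with a componentwise parity extension. The paper sidesteps this by invoking the established half-space $W^{1,p}$ theory for constant-coefficient elliptic systems with Dirichlet data (theorem 7.2 of \cite{gm}, ultimately of Agmon--Douglis--Nirenberg type), and you should do the same rather than rely on reflection. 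The remainder of your argument is fine once this model estimate is supplied.
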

For readers' convenience, the proofs of Theorem \ref{lem-global-C1alp-estimates} and Theorem \ref{lem-lp-esti} are given in Section \ref{sec-c1-alp}.

\subsection{Estimates of $C_i^l$}

For the estimates of $C_i^l$, we have 
\begin{prop}\label{lem-rest-terms} 
	Let $C_i^l$ be defined in \eqref{decomposition-u}. Then, (i)
	\begin{equation}\label{esti-C12}
	|C_i^l|\leq C, \quad \text{for} ~~ i=1, \, 2,~~~~ l=1, \, 2,\cdots, \frac{d(d+1)}{2}. 
	\end{equation}
	where $C$ is independent of $\va$. (ii) For $d=2$, 
	\begin{equation}\label{esti-c1-c2}
 |C_1^l-C_2^l|\leq  C\va^{\frac{\gamma}{1+\gamma}}, \quad \text{for} ~~ l=1,\, 2.
	\end{equation}
\end{prop}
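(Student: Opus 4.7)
My plan is to treat the system \eqref{equ-decompositon} as a linear algebraic system for the $d(d+1)$ free constants $\{C_i^l\}$ and invert it using the gradient estimates of Proposition \ref{prop-interior}. First I would convert the surface integrals to energy integrals: integrating by parts on $\Omega$ and using $\mathcal{L}_{\lam,\mu}v_i^l=0$ together with $v_i^l=\delta_{ij}\psi^l$ on $\partial D_j$, $v_i^l=0$ on $\partial D$, and the analogous identities for $v_0$, gives
$$a_{ij}^{lk}:=\int_{\partial D_j}\frac{\partial v_i^l}{\partial\nu}\Big|_+\cdot\psi^k=-\int_{\Omega}\bigl(\mathbb{C}^0 e(v_i^l),e(v_j^k)\bigr)\,dx,$$
with a matching identity for $b_j^k$ in terms of $v_0$. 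Thus \eqref{equ-decompositon} becomes the symmetric linear system $\sum_{i,l}C_i^l\,a_{ij}^{lk}=-b_j^k$.

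For part (i), the plan is to exploit the variational characterization of $u$. Testing $E[u]\le E[\tilde u]$ against the admissible competitor $\tilde u$ equal to $v_0$ on $\Omega$ and $0$ on $D_1\cup D_2$ (admissible since $v_0|_{\partial D_i}=0$ and $e(0)=0$ on $D_i$), Proposition \ref{prop-interior}(iii) gives $E[\tilde u]\le C$, hence $\|e(u)\|_{L^2(\Omega)}\le C$. A Korn-plus-trace argument on a smooth subdomain of $\Omega$ touching $\partial D_i$ away from the narrow region then bounds $u$ in $L^2(\partial D_i\cap\{|x'|>R_1\})$; since $u=\sum_l C_i^l\psi^l$ on $\partial D_i$ and the $\psi^l$ are linearly independent on any non-degenerate piece of the boundary, I conclude $|C_i^l|\le C$.

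For part (ii), I would rewrite the system in the unknowns $\alpha^l:=C_1^l-C_2^l$ and $\beta^l:=C_2^l$:
$$\sum_l\alpha^l\,a_{1j}^{lk}+\sum_l\beta^l\bigl(a_{1j}^{lk}+a_{2j}^{lk}\bigr)=-b_j^k.$$
Three elementary observations reduce this to a $3\times 3$ system for $\alpha^l$ alone with $O(1)$ right-hand side. Proposition \ref{prop-interior}(iii) gives $\|\nabla(v_1^l+v_2^l)\|_{L^\infty(\Omega)}\le C$, hence $|a_{1j}^{lk}+a_{2j}^{lk}|\le C\|\nabla v_j^k\|_{L^1(\Omega)}\le C$, the $L^1$ bound following by integrating \eqref{nabla-v-i0}--\eqref{nabla-v-i3} over $\Omega_{R_1}$ against the thickness $\delta(x_1)$ and using \eqref{esti-infty-out-12} outside; the right-hand side $|b_j^k|$ is controlled similarly; and $|\beta^l|\le C$ by part (i). Taking $j=1$ and $d=2$, I am reduced to inverting $M_{lk}:=a_{11}^{lk}$. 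Using $v_1^l=\bar u_1^l+w_1^l$ and the fine estimates of Proposition \ref{prop-interior}, I would show that the two dominant diagonal entries are
$$a_{11}^{11}=-\mu\,\widetilde Q_\gamma\,\va^{-\gamma/(1+\gamma)}(1+o(1)),\qquad a_{11}^{22}=-(\lam+2\mu)\,\widetilde Q_\gamma\,\va^{-\gamma/(1+\gamma)}(1+o(1)),$$
while every remaining entry of $M$ is of strictly smaller order---either $o(\va^{-\gamma/(1+\gamma)})$ for the $(1,2),(2,1)$ entries (coming from integrals of the form $\int_0^{R_1}|x_1|^\gamma/(\va+|x_1|^{1+\gamma})\,dx_1$ for the $\bar u$ part, plus cross terms with $w_1^l$ controlled by Cauchy--Schwarz), or $O(1)$ for any entry involving the rotational index $l=3$, the last because $|\nabla v_1^3|\le C(\va+|x'|)/(\va+|x'|^{1+\gamma})\in L^2(\Omega)$. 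Consequently $M$ is diagonally dominant with $|(M^{-1})_{ll}|\le C\va^{\gamma/(1+\gamma)}$ for $l=1,2$, yielding $|\alpha^l|\le C\va^{\gamma/(1+\gamma)}$.

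The hardest step will be controlling the correction $w_1^l$ in the integrals defining the dominant diagonal entries of $M$. In the $C^{2,\gamma}$ setting of \cite{bll} this was handled with $W^{2,p}$ estimates, but here $w_i^l$ solves $\mathcal{L}_{\lam,\mu}w_i^l=-\nabla\cdot(\mathbb{C}^0 e(\bar u_i^l))$ with right-hand side only H\"older continuous. I would therefore rely on the pointwise bound $|\nabla w_1^l|\le C(\va+|x_1|^{1+\gamma})^{-1/(1+\gamma)}$ from Proposition \ref{prop-interior}(i)---a consequence of the $W^{1,p}$ and $C^{1,\gamma}$ tools in Theorems \ref{lem-global-C1alp-estimates}--\ref{lem-lp-esti}---to verify that $\|\nabla w_1^l\|_{L^2(\Omega_{R_1})}$ is small enough that the leading $\va^{-\gamma/(1+\gamma)}$ singularity of $a_{11}^{ll}$ is captured entirely by the explicit auxiliary function $\bar u_1^l$ and is not contaminated by the $C^{1,\gamma}$ correction.
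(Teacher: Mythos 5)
Your proposal follows essentially the same route as the paper: convert the boundary integrals in \eqref{equ-decompositon} to energy bilinear forms $a_{ij}^{kl}=\int_\Omega(\mathbb{C}^0 e(v_i^k),e(v_j^l))\,dx$, reduce to the $3\times3$ system in the unknowns $C_1^k-C_2^k$ (with the $C_2^l(v_1^l+v_2^l)$ and $v_0$ pieces absorbed into a bounded right-hand side $\widetilde b_1^l$), estimate the entries of the matrix $a_{11}^{kl}$ using Proposition~\ref{prop-interior}, and invert. Two minor inaccuracies are worth flagging. First, the exact asymptotics you assert for the dominant diagonal entries, $a_{11}^{11}=-\mu\widetilde Q_\gamma\va^{-\gamma/(1+\gamma)}(1+o(1))$ (and similarly for $a_{11}^{22}$), are \emph{not} available under the general $C^{1,\gamma}$ hypotheses \eqref{h1h2'}--\eqref{coefficient-bound}: the coefficient $\widetilde Q_\gamma/\kappa^{1/(1+\gamma)}$ requires the additional structural assumption \eqref{asymp-symm-h1h2} used only in Theorem~\ref{thm-asymptotics}. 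Under the present hypotheses one only gets the two-sided bound $\frac{1}{C}\va^{-\gamma/(1+\gamma)}\le a_{11}^{kk}\le C\va^{-\gamma/(1+\gamma)}$ for $k=1,2$; fortunately, that is all your argument actually uses, so this overreach does not break the proof. Second, ``diagonal dominance'' is not literally accurate---in the third row the diagonal entry $a_{11}^{33}$ and the off-diagonal entries $a_{11}^{13},a_{11}^{23}$ are all $O(1)$---so what you really need is $\det M=a_{11}^{11}a_{11}^{22}a_{11}^{33}(1+o(1))$ together with a \emph{positive lower bound} on $a_{11}^{33}$. The latter should be stated explicitly; it follows from the positive-definiteness of the energy form (the paper invokes $a_{11}^{33}\ge 1/C$ and the positive-definiteness of $\mathcal A$), and once you have it, Cramer's rule gives $|C_1^l-C_2^l|\le C\va^{\gamma/(1+\gamma)}$ for $l=1,2$ exactly as in the paper. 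Your part~(i), which bounds $\|e(u)\|_{L^2(\Omega)}$ by the variational comparison with the competitor $\tilde u=(v_0\text{ on }\Omega,\,0\text{ on }D_1\cup D_2)$ and then reads off $|C_i^l|$ from the trace of $u$ on a portion of $\partial D_i$ away from the neck, is a clean, self-contained version of the energy argument the paper outsources to lemma~4.1 of \cite{bll}; it works as described provided you fix the rigid-motion ambiguity in Korn's inequality via the known data on $\partial D$, which you do.
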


\subsection{Proof of Theorem \ref{thm-interior} and Theorem \ref{thm-d-geq-3}}
We are in position to prove Theorem \ref{thm-interior} and  Theorem \ref{thm-d-geq-3} by using Proposition \ref{prop-interior} and Proposition \ref{lem-rest-terms}.

\begin{proof}[Proof of Theorem \ref{thm-interior}] 
For $d=2$, by using \eqref{decomposition_u2}, \eqref{nabla-v-i0}, \eqref{nabla-v-i3}, \eqref{v1+v2_bounded1}, \eqref{esti-C12} and \eqref{esti-c1-c2},  one has, for $x\in \Omega_{R_1}$, 
\begin{equation*}
|\nabla u(x)|\leq \sum\limits_{l=1}^2|C_1^l-C_2^l| |\nabla v_1^l(x)|+C\sum\limits_{i=1}^2|\nabla v_i^3(x)|+C\leq  \frac{C\va^{\frac{\gamma}{1+\gamma}}}{\va+|x_1|^{1+\gamma}}.
\end{equation*}
Thus, \eqref{upper-bounds} is proved. \eqref{nabla-u-out} follows from \eqref{esti-infty-out-12} and \eqref{esti-C12}.
	\end{proof}

\begin{proof}[Proof of Theorem \ref{thm-d-geq-3}] For $d\geq 3$, by using \eqref{decomposition_u2}, \eqref{nabla-v-i0}, \eqref{nabla-v-i3}, \eqref{v1+v2_bounded1} and \eqref{esti-C12}, one has, for $x\in \Omega_{R_1}$,
	\begin{equation*}
	|\nabla u(x)|\leq \sum\limits_{i=1}^2\sum_{l=1}^{\frac{d(d+1)}{2}}|C_i^l||\nabla v_i^l(x)| +|\nabla v_0(x)| \leq \frac{C}{\va+|x'|^{1+\gamma}}.
	\end{equation*}
The proof of \eqref{upper-bounds-d3} is completed.	
\end{proof}

\vspace{0.5cm}

\section{ Estimates for Theorem \ref{thm-interior}}\label{sec3}

 In this section, we prove Proposition \ref{prop-interior} in Subsection \ref{sub-3.1}, and prove Proposition \ref{lem-rest-terms} in Subsection \ref{prop-2.4}. Finally, some remarks on the lower bound are given in Subsection \ref{sec-lower-bound}.

Since $\ptl D_1$ and $\ptl D_2$ are of $C^{1, \gamma}$, we need to adapt the iteration technique developed in \cite{bll,llby} to our setting to apply Theorem \ref{lem-global-C1alp-estimates}. For fixed $|z_1|\leq R_1<1$, we define
\begin{equation*}
\widehat{\Omega}_{s}(z_1):=\left\{x\in \mathbb{R}^{2}:-\frac{\va}{2}+h_{2}(x_1)<x_{2}<\frac{\va}{2}+h_{1}(x_1),~|x_1-z_1|<{s}~\right\},
\end{equation*}
for $0<s< R_1$. In the following, we always assume that $\varepsilon$ and $|z_{1}|$ are sufficiently small. 

The following  estimate for the H\"{o}lder semi-norm of $\nabla \bar{u}$ is needed: For a fixed constant $s\leq C\delta(z_1)$, 
\begin{equation}\label{ineq-semi-holder-norm}
[\nabla \bar{u}]_{\gamma, \, \widehat{\Omega}_{s}(z_1) }\leq C\delta(z_1)^{-1-\frac{1}{1+\gamma}}s^{1-\gamma}+C\delta(z_1)^{-1-\frac{\gamma^2}{1+\gamma}},
\end{equation}
where $\delta(z_1)=\va+h_1(z_1)-h_2(z_1)$, defined by \eqref{delta_x}.

Indeed, since
\begin{equation}\label{sim-norm}
[\nabla \bar{u}]_{\gamma, \, \widehat{\Omega}_{s}(z_1)}\leq [\ptl_1 \bar{u}]_{\gamma, \, \widehat{\Omega}_{s}(z_1)}+[\ptl_2 \bar{u}]_{\gamma, \, \widehat{\Omega}_{s}(z_1)},
\end{equation}
we will estimate the H\"{o}lder semi-norms $[\ptl_1 \bar{u}]_{\gamma, \, \widehat{\Omega}_{s}(z_1)}$ and $[\ptl_2 \bar{u}]_{\gamma, \, \widehat{\Omega}_{s}(z_1)}$ in $\widehat{\Omega}_{s}(z_1)$ in the following, one by one. First, since 
$$s\leq C\delta(z_1),\quad\mbox{and}\quad |z_1|\leq C\delta(z_1)^{1/(1+\gamma)},$$ it follows that, for any $x=(x_1, x_2)\in \widehat{\Omega}_{s}(z_1)$, 
\begin{equation}\label{ineq-x-prime}
|x_1|\leq |x_1-z_1|+|z_1|<s+|z_1|\leq C\delta(z_1)^{1/(1+\gamma)}.
\end{equation}
By using mean value theorem and \eqref{h1h_convex2}, we have, for any $x, \tilde{x}\in \widehat{\Omega}_{s}(z_1)$ with $x_1\neq \tilde{x}_1$, 
 \begin{equation}\label{ineq-hi-meanvalue}
\begin{aligned}
|h_i(x_1)-h_i(\tilde{x}_1)|&=| h_{i}'(x_{1\theta_{i}})|\cdot|x_1-\tilde{x}_1|\leq C\delta(z_1)^{\frac{\gamma}{1+\gamma}}\cdot|x_1-\tilde{x}_1|, \quad i=1,\,2.
\end{aligned}
 \end{equation}
 So that
 \begin{equation}\label{ineq-meanvalue}
 |\delta(x_1)-\delta(\tilde{x}_1)|\leq\sum_{i=1}^{2}|h_i(x_1)-h_i(\tilde{x}_1)|\leq C\delta(z_1)^{\frac{\gamma}{1+\gamma}}\cdot|x_1-\tilde{x}_1|.
 \end{equation}
In particular, taking $\tilde{x}_1=z_1$, and recalling that $|x_1-z_1|\leq s\leq C\delta(z_1)$, we have 
 \begin{equation}\label{ineq-meanvalue-leq}
\delta(x_1)\leq \delta(z_1)+|h_1(x_1)-h_1(z_1)|+|h_2(x_1)-h_2(z_1)|\leq C\delta(z_1),
\end{equation}
and for sufficiently small $\va$ and $|z_1|$, 
 \begin{align}\label{ineq-meanvalue-1}
\delta(x_1)&\geq\delta(z_1)-|h_1(x_1)-h_1(z_1)|-|h_2(x_1)-h_2(z_1)|\geq\frac{1}{2}\delta(z_1).
 \end{align}
 
Recalling
\begin{equation*}
\ptl_2\bar{u}(x)=\frac{1}{\delta(x_1)},
\end{equation*}
 it follows from \eqref{ineq-meanvalue} and \eqref{ineq-meanvalue-1} that
\begin{align}\label{delta-semi}
|\partial_{2}\bar{u}(x)-\partial_{2}\bar{u}(\tilde{x})|&=\Big|\frac{1}{\delta(x_1)}-\frac{1}{\delta(\tilde{x}_1)}\Big|=\frac{|\delta(x_1)-\delta(\tilde{x}_1)|}{\delta(x_1)\delta(\tilde{x}_1)} \leq C\delta(z_1)^{-1-\frac{1}{1+\gamma}}\cdot|x_1-\tilde{x}_1|,
\end{align}
Thus, using $|x_1-\tilde{x}_1|\leq |x_1-z_1|+|z_1-\tilde{x}_1|\leq 2s$, we have
\begin{equation}\label{ineq-xn-alp}
[\ptl_2 \bar{u}]_{\gamma,\, \widehat{\Omega}_{s}(z_1)}:=\sup\limits_{\substack{x,\,\tilde{x}\in \widehat{\Omega}_{s}(z_1) \\ x_1\neq\tilde{x}_1}}\frac{|\partial_{2}\bar{u}(x)-\partial_{2}\bar{u}(\tilde{x})|}{|x_1-\tilde{x}_1|^\gamma}\leq C\delta(z_1)^{-1-\frac{1}{1+\gamma}}s^{1-\gamma}.
\end{equation}

While, 
\begin{align*}
\ptl_1\bar{u}(x)& =\frac{-h_2'(x_1)}{\delta(x_1)}+\frac{-x_2 \delta'(x_1)}{\delta(x_1)^2}+\frac{\big(h_2(x_1)-\va/2 \big)\delta'(x_1)}{\delta(x_1)^2}\\
&=:\Phi_1(x)+\Phi_2(x)+\Phi_3(x).
\end{align*}
Since $h_1, h_2 \in C^{1, \,\gamma}$, and by virtue of \eqref{delta-semi}, we have
\begin{align*}
|\Phi_1(x)-\Phi_1(\tilde{x})|&\leq \frac{|h_2'(x_1)-h_2'(\tilde{x}_1)|}{\delta(x_1)}+|h_2'(\tilde{x}_1)|\cdot\Big|\frac{1}{\delta(x_1)}-\frac{1}{\delta(\tilde{x}_1)}\Big|\\
&\leq C\Big( \delta(z_1)^{-1}\cdot |x_1-\tilde{x}_1|^\gamma+\delta(z_1)^{-\frac{2}{1+\gamma}}\cdot|x_1-\tilde{x}_1|\Big).
\end{align*}
In view of $\va/2 -h_2(\tilde{x}_1)\leq \delta(\tilde{x}_1)$, we have
\begin{align*}
|\Phi_3(x)-\Phi_3(\tilde{x})|&\leq  \frac{|\delta'(x_1)|}{\delta(x_1)^2}\cdot|h_2(x_1)-h_2(\tilde{x}_1)|+\frac{\delta(\tilde{x}_1)}{\delta(x_1)^2}\cdot|\delta'(x_1)-\delta'(\tilde{x}_1)|\\
&\quad +\delta(\tilde{x}_1)\cdot|\delta'(\tilde{x}_1)|\cdot \Big|\frac{1}{\delta(x_1)^2}-\frac{1}{\delta(\tilde{x}_1)^2}\Big|\\
&\leq C\Big(\delta(z_1)^{-\frac{2}{1+\gamma}}\cdot|x_1-\tilde{x}_1|+ \delta(z_1)^{-1}\cdot|x_1-\tilde{x}_1|^\gamma\Big).
\end{align*}
Thus, for any $x,\,\tilde{x}\in \widehat{\Omega}_{s}(z_1) $ with $x_{1}\neq\tilde{x}_{1}$, 
\begin{equation}\label{ineq-phi-12}
\frac{|\Phi_i(x)-\Phi_i(\tilde{x})|}{|x-\tilde{x}|^\gamma} \leq C \Big( \delta(z_1)^{-1}+\delta(z_1)^{-\frac{2}{1+\gamma}} s^{1-\gamma}\Big),\quad i=1, \, 3.
\end{equation}

Recalling \eqref{ineq-meanvalue-leq}, we know
\begin{equation*}
|\tilde{x}_2|\leq \delta(\tilde{x}_1)\leq C\delta(z_1), \quad\mbox{
in}\quad \widehat{\Omega}_{s}(z_1).
\end{equation*}
So that
\begin{align*}
|\Phi_2(x)-\Phi_2(\tilde{x})|&\leq \frac{|\delta'(x_1)|}{\delta(x_1)^2}\cdot|x_2-\tilde{x}_2|+\frac{|\tilde{x}_2|}{\delta(x_1)^2}\cdot\big|\delta'(x_1)-\delta'(\tilde{x}_1)\big|\\
&\leq C \Big(\delta(z_1)^{-1-\frac{1}{1+\gamma}}\cdot|x_2-\tilde{x}_2|+ \delta(z_1)^{-1}\cdot|x_1-\tilde{x}_1|^\gamma\Big).
\end{align*}
Using
\begin{equation*}\label{ineq-x2-x2}
|x_2-\tilde{x}_2|\leq |x_2|+|\tilde{x}_2|\leq C\delta(z_1),
\end{equation*}
we have,  for any $x,\,\tilde{x}\in \widehat{\Omega}_{s}(z_1) $ with $x\neq\tilde{x}$, 
\begin{align}\label{ineq-phi-3}
\frac{|\Phi_2(x)-\Phi_2(\tilde{x})|}{|x-\tilde{x}|^\gamma} &\leq C \Big(\delta(z_1)^{-1-\frac{1}{1+\gamma}}\cdot|x_2-\tilde{x}_2|^{1-\gamma}+\delta(z_1)^{-\frac{2}{1+\gamma}} \cdot|x_1-\tilde{x}_1|^{1-\gamma}\Big)\notag \\
&\leq C\Big( \delta(z_1)^{-\gamma-\frac{1}{1+\gamma}}+\delta(z_1)^{-\frac{2}{1+\gamma}} s^{1-\gamma}\Big).
\end{align}

Combining \eqref{ineq-phi-12}, \eqref{ineq-phi-3}, and using $\gamma+\frac{1}{1+\gamma}=1+\frac{\gamma^2}{1+\gamma}>1$, we obtain
\begin{align}\label{ineq-xi-alp}
[\ptl_1\bar{u}]_{\gamma, \, \widehat{\Omega}_{s}(z_1)}&\leq \sum_{j=1}^3 \sup\limits_{\substack{x,\,\tilde{x}\in \widehat{\Omega}_{s}(z_1) \\ x\neq\tilde{x}}} \frac{|\Phi_j(x)-\Phi_j(\tilde{x})|}{|x-\tilde{x}|^\gamma} \notag \\
&\leq C\Big(\delta(z_1)^{-1-\frac{\gamma^2}{1+\gamma}}+\delta(z_1)^{-\frac{2}{1+\gamma}}s^{1-\gamma}\Big).
\end{align}
Thus, \eqref{ineq-semi-holder-norm} follows immediately  from \eqref{ineq-xn-alp} and \eqref{ineq-xi-alp}.

\subsection{Proof of Proposition \ref{prop-interior} in dimension $d=2$} \label{sub-3.1}

\begin{proof}[Proof of \eqref{nabla-w-i0} and \eqref{nabla-v-i0} when $d=2$] We only prove the estimate for the case that $i=l=1$, that is, $\nabla v_{1}^{1}$, for instance, since the proof is the same for the other cases.
For simplicity, we denote $w:=w_1^1$. Then $w$ satisfies
\begin{equation}\label{w20'}
\left\{
\begin{aligned}
-\mathcal{L}_{\lam,\mu} w&=\nabla\cdot(\mathbb{C}^0e(\bar{u}_1^1)) \quad&\mbox{in}~\Omega,\\
w&=0\quad&\mbox{on}~\partial \Omega.
\end{aligned}\right.
\end{equation}
Clearly, $w$ still satisfies 
\begin{equation}\label{equ-w-matrix}
-\mathcal{L}_{\lam,\mu} w=\nabla\cdot(\mathbb{C}^0e(\bar{u}_1^1)-\mathcal{M}) \quad\mbox{in}~\Omega,
\end{equation}
for any constant matrix $\mathcal{M}=(\mathfrak{a}_{ij})$, $i,\,j=1,\, 2$. We will mainly deal with this main difference with that in \cite{bll}. We modify the proof in \cite{bll} and divide it into three steps.

{\bf STEP 1. The boundedness of the total energy:} 
\begin{equation}\label{energy-nabla-w11}
\int_{\Omega}|\nabla {w}|^2\ dx\leq C.
\end{equation}

Indeed, we multiply \eqref{w20'} by $w$, making use of the integration by parts, and obtain
\begin{equation}\label{equal-integ-ptl}
\int_{\Omega}\Big(\mathbb{C}^0e(w), e(w)\Big)dx =\int_{\Omega}\big( \nabla\cdot(\mathbb{C}^0e(\bar{u}_1^1))\big)\cdot w \ dx.
\end{equation}
For the left hand side of \eqref{equal-integ-ptl}, it follows from \eqref{elliptic} and the first Korn's inequality that 
\begin{equation}\label{ineq-kohn-1}
\int_{\Omega}\Big(\mathbb{C}^0e(w), e(w)\Big)dx\geq C  \int_{\Omega} |e(w)|^2 \ dx \geq C \int_{\Omega}|\nabla w|^2 \ dx.
\end{equation}
For the right hand side of \eqref{equal-integ-ptl}, we rewrite it as
\begin{align}\label{ineq-div-nabla-u}
&\int_{\Omega}\big( \nabla\cdot(\mathbb{C}^0e(\bar{u}_1^1))\big)\cdot w \ dx\notag \\
&= \int_{\Omega} \mu\Div (\nabla \bar{u}) w^{(1)}  +(\lam+\mu)\big(\ptl_1(\ptl_1 \bar{u})w^{(1)}+ \ptl_2(\ptl_1 \bar{u}) w^{(2)} \big) \ dx.
\end{align}

First, by the mean value theorem, we fix a $r_0\in (\frac{R_1}{2}, \frac{2R_1}{3})$ such that
\begin{align}\label{ineq-mean-value}
\int\limits_{\substack{ |x_1|=r_0 \\ -\frac{\va}{2}+h_2(x_1)<x_2<\frac{\va}{2}+h_1(x_1)}}|w| dx_2 &=\frac{6}{R_1}\int\limits_{\substack{R_1/2<|x_1|<2R_1/3 \\ -\frac{\va}{2}+h_2(x_1)<x_2<\frac{\va}{2}+h_1(x_1)}}|w| dx\notag \\
&\leq C\int_{\Omega_{2R_1/3}\setminus \Omega_{R_1/2}}|\nabla w| \ dx\notag \\
& \leq C\Big(\int_{\Omega}|\nabla w|^2 \ dx\Big)^\frac{1}{2}.
\end{align}
For the first term on the right hand side of \eqref{ineq-div-nabla-u}, noticing that $\ptl_{22} \bar{u}=0$ in $\Omega_{R_1}$ and \eqref{ubar-out}, one has
\begin{align}\label{ineq-div-nabla-u-2}
\Big|\int_{\Omega} \Div (\nabla \bar{u}) w^{(1)}\ dx\Big|\leq \Big|\int_{\Omega_{r_0}}\ptl_1(\ptl_1\bar{u} ) w^{(1)} \ dx\Big|+\Big|\int_{\Omega\setminus \Omega_{r_0}} \Div (\nabla \bar{u}) w^{(1)} \ dx\Big|. 
\end{align}
For the first term on the right hand side of \eqref{ineq-div-nabla-u-2}, by using the integration by parts,
\begin{align*}
\int_{\Omega_{r_0}}\ptl_1(\ptl_1\bar{u} ) w^{(1)} \ dx&=-\int_{\Omega_{r_0}} \ptl_1\bar{u} \ptl_1 w^{(1)} \ dx +\int\limits_{\substack{|x_1|=r_0\\ -\frac{\va}{2}+h_2(x_1)<x_2<\frac{\va}{2}+h_1(x_1) }} w^{(1)}\ptl_1\bar{u}  \ dx_2\\
&=:\mathrm{I}_1 + \mathrm{I}_2.
\end{align*}
 In view of \eqref{nablau_bar-interior} and \eqref{ineq-mean-value}, we have 
\begin{equation*}
|\mathrm{I}_1| \leq C\Big(\int_{\Omega_{r_0}}|\ptl_1\bar{u}|^2 \ dx\Big)^{\frac{1}{2}}\Big(\int_{\Omega}|\nabla w|^2 \ dx\Big)^{\frac{1}{2}}\leq C \Big(\int_{\Omega}|\nabla w|^2 \ dx\Big)^{\frac{1}{2}},
\end{equation*}
and
\begin{equation*}
|\mathrm{I}_2|\leq C \int\limits_{\substack{ |x_1|=r_0\\ -\frac{\va}{2}+h_2(x_1)<x_2<\frac{\va}{2}+h_1(x_1) }}|w| dx_2\leq C \Big(\int_{\Omega}|\nabla w|^2 \ dx\Big)^{\frac{1}{2}}.
\end{equation*}
For the second term on the right hand side of \eqref{ineq-div-nabla-u-2}, we have
\begin{equation*}
\Big|\int_{\Omega\setminus \Omega_{r_0}} \Div (\nabla \bar{u}) w^{(1)} \ dx\Big|\leq C\Big(\int_{\Omega} |\nabla w|^2 \ dx\Big)^{\frac{1}{2}}.
\end{equation*} 
For the last term on the right hand side of \eqref{ineq-div-nabla-u}, it follows from the integration by parts with $w=0$ on $\partial{D}_{1}\cup\partial{D}_{2}$ and the H\"{o}lder inequality that
\begin{equation*}\label{ineq-integral-holder}
\Big|\int_{\Omega} \ptl_2(\ptl_1 \bar{u}) w^{(2)} \ dx\Big|  \leq C\Big(\int_{\Omega}|\ptl_1 \bar{u}|^2 \ dx \Big)^{\frac{1}{2}}\Big(\int_{\Omega} |\nabla w|^2 \ dx\Big)^{\frac{1}{2}}\leq C \Big(\int_{\Omega}|\nabla w|^2 \ dx\Big)^{\frac{1}{2}}.
\end{equation*}
These, combining with \eqref{equal-integ-ptl}--\eqref{ineq-div-nabla-u}, yield
\begin{equation*}
\int_{\Omega} |\nabla w|^2 dx \leq C\Big(\int_{\Omega}|\nabla w|^2 \ dx\Big)^{\frac{1}{2}}.
\end{equation*}
So that \eqref{energy-nabla-w11} is proved.

{\bf STEP 2. The local energy estimates:}
\begin{equation}\label{energy_w11_inomega_z1}
\int_{\widehat{\Omega}_{\delta(z_1)}(z_1)}\left|\nabla{w}\right|^{2}\ dx\leq
C\delta(z_1)^{\frac{2\gamma}{1+\gamma}},
\end{equation}
where $\delta(z_1)=\va+h_1(z_1)-h_2(z_1)$.

Indeed, for $0<t<s<R_{1}$, let $\eta$ be a cut-off function satisfying 
\begin{align*}\label{def-eta-x-prime}
\eta(x_1)=\begin{cases}
1 & \text{if} ~ |x_1-z_1|<t, \\
0 & \text{if} ~ |x_1-z_1|>s,\\
\end{cases}
\quad\text{and}\quad |\eta'(x_1)|\leq\frac{2}{s-t}.
\end{align*}
Multiplying  \eqref{equ-w-matrix} by $\eta^{2}w$ and using the integration by parts, one has
\begin{equation}\label{equal-integrate-parts}
\int_{\widehat{\Omega}_{s}(z_1)}\Big(\mathbb{C}^0e(w), e(\eta^2w)\Big) \ dx =- \int_{\widehat{\Omega}_{s}(z_1)} \Big(\mathbb{C}^0e(\bar{u}_1^1)-\mathcal{M}, \nabla (\eta^2w)\Big) \ dx.
\end{equation}
For the left hand side of \eqref{equal-integrate-parts}, using the first Korn's inequality and standard arguments, one has
\begin{equation*}
\int_{\widehat{\Omega}_{s}(z_1)} \Big(\mathbb{C}^0e(w), e(\eta^2w)\Big) \ dx\geq \frac{1}{C} \int_{\widehat{\Omega}_{s}(z_1)} |\nabla ( \eta w)|^2 dx- C\int_{\widehat{\Omega}_{s}(z_1)} |w|^2| \eta '|^2 \ dx.
\end{equation*}
For the right hand side of \eqref{equal-integrate-parts}, using the Young's inequality, we have for any $\zeta>0$,
\begin{align*}
&\Big|\int_{\widehat{\Omega}_{s}(z_1)}  \Big(\mathbb{C}^0e(\bar{u}_1^1)-\mathcal{M}, \nabla (\eta^2w)\Big) \ dx\Big| \\
&\leq \zeta\int_{\widehat{\Omega}_{s}(z_1)} \eta^2|\nabla w|^2 \ dx\quad +\frac{C}{\zeta} \int_{\widehat{\Omega}_{s}(z_1)} |\eta'|^2|w|^2 dx +\frac{C}{\zeta}\int_{\widehat{\Omega}_{s}(z_1)} |\mathbb{C}^0e(\bar{u}_1^1)-\mathcal{M}|^2 \ dx.
\end{align*}
It follows that
\begin{equation}\label{ineq-iteration-w1}
\int_{\widehat{\Omega}_{t}(z_1)}|\nabla w|^{2}\ dx \leq\,\frac{C}{(s-t)^{2}}\int_{\widehat{\Omega}_{s}(z_1)}|w|^{2}\ dx +C\int_{\widehat{\Omega}_{s}(z_1)} |\mathbb{C}^0e(\bar{u}_1^1)-\mathcal{M}|^2 \ dx.
\end{equation}
Noticing
\begin{equation*}
\mathbb{C}^0e(\bar{u}_1^1)=\begin{pmatrix}
~(\lam+2\mu)\ptl_1\bar{u} &  \mu \ptl_2\bar{u} ~\\
~~\\
~\mu \ptl_2\bar{u} & \lam \ptl_1\bar{u}~
\end{pmatrix},
\end{equation*}
then in \eqref{ineq-iteration-w1} we take the  constant matrix $\mathcal{M}=\mathcal{M}_1=(\mathfrak{a}_{ij})$, defined by
\[\mathcal{M}_1:=\pvint_{\widehat{\Omega}_{s}(z_1)} \mathbb{C}^0e(\bar{u}_1^1(y)) \ dy:=\frac{1}{|\widehat{\Omega}_{s}(z_1)|}\int_{\widehat{\Omega}_{s}(z_1)} \mathbb{C}^0e(\bar{u}_1^1(y)) \ dy .\]

{\bf Case 1.} For $|z_1|\leq \va^{\frac{1}{1+\gamma}}$ and $0<s<\va^{\frac{1}{1+\gamma}}$, we have $\va\leq\delta(z_1)\leq\,C\varepsilon$. By a direct calculation, we have
\begin{align}\label{energy_nabla_w11_square_in}
\int_{\widehat{\Omega}_{s}(z_1)}|w|^{2} \ dx &=\int_{|x_1-z_1|<s}\int_{-\va/2+h_2(x_1)}^{\va/2+h_1(x_1)}\left(\int_{-\frac{\va}{2}+h_2(x_1)}^{x_{2}}\partial_{2}w \ dx_{2}\right)^{2}\ dx_{2}dx_1\nonumber\\
&\leq C\varepsilon^{2}\int_{\widehat{\Omega}_{s}(z_1)}|\nabla{w}|^{2} \ dx.
\end{align}
On the other hand, recalling the definition of semi-norm $[\cdot]_{\gamma, \, \widehat{\Omega}_{s}(z_1)}$ in \eqref{def-nablaU-alp},
\begin{align*}
|\mathbb{C}^0e(\bar{u}_1^1)-\mathcal{M}_1|^2&\leq|(2\mu+\lam)\ptl_1 \bar{u}-\mathfrak{a}_{11}|^2+2|\mu \ptl_2 \bar{u} -\mathfrak{a}_{12}|^2+|\lam\ptl_1 \bar{u} -\mathfrak{a}_{22}|^2\\
&\leq \frac{C}{|\widehat{\Omega}_{s}(z_1)|^2}\Big(\int_{\widehat{\Omega}_{s}(z_1)} \big( |\ptl_1  \bar{u}(x)-\ptl_1 \bar{u}(y)| +|\ptl_2\bar{u}(x)-\ptl_2\bar{u}(y)| \big)\ dy\Big)^2  \\
&\leq \frac{C[\nabla \bar{u}]_{\gamma, \, \widehat{\Omega}_{s}(z_1)}^2}{|\widehat{\Omega}_{s}(z_1)|^2}\Big(\int_{\widehat{\Omega}_{s}(z_1)} |x-y|^{\gamma} \ dy\Big)^2 \\
& \leq C [\nabla \bar{u}]_{\gamma, \, \widehat{\Omega}_{s}(z_1)}^2 \Big(s^{2\gamma}+\delta(z_1)^{2\gamma}\Big).
\end{align*}
Using \eqref{ineq-semi-holder-norm} and by a direct calculation,
\begin{align}\label{ineq-s-va}
&\int_{\widehat{\Omega}_{s}(z_1)} |\mathbb{C}^0e(\bar{u}_1^1)-\mathcal{M}_1|^2 \ dx \notag \\
&\leq  \int_{\widehat{\Omega}_{s}(z_1)} \big(|(2\mu+\lam)\ptl_1 \bar{u}-\mathfrak{a}_{11}|^2+2|\mu \ptl_2 \bar{u} -\mathfrak{a}_{12}|^2+|\lam\ptl_1 \bar{u} -\mathfrak{a}_{22}|^2 \big) \ dx\notag  \\
& \leq C [\nabla \bar{u}]_{\gamma, \, \widehat{\Omega}_{s}(z_1)}^2  \int_{\widehat{\Omega}_{s}(z_1)}  (s^{2\gamma}+\delta(z_1)^{2\gamma}) \ dx \notag \\
 &\leq C\Big(\frac{s^3}{\va^{1+\frac{2}{1+\gamma}}}+\frac{s}{\va^{\frac{2}{1+\gamma}-1}}
+\frac{s^{3-2\gamma}}{\va^{1+\frac{2}{1+\gamma}-2\gamma}}+\frac{s^{1+2\gamma}}{\va^{2\gamma+\frac{2}{1+\gamma}-1}}\Big)=:G_1(s).
\end{align}
It follows from \eqref{ineq-iteration-w1}--\eqref{ineq-s-va}  that 
\begin{equation}\label{ineq-F111_in}
F(t)\leq\,\left(\frac{c_{1}\varepsilon}{s-t}\right)^{2}F(s) +CG_1(s),\quad \forall~ 0<t<s<\va^{\frac{1}{1+\gamma}},
\end{equation}
here we fix the constant $c_1$, and denote
\begin{equation*}
F(t):=\int_{\widehat{\Omega}_{t}(z_1)}|\nabla{w}|^{2} \ dx.
\end{equation*}

Similarly as in \cite{bll}, let $k=\left[\frac{1}{4c_{1}\varepsilon^{\frac{\gamma}{1+\gamma}}}\right]$ and $t_{i}=\delta(z_1)+2c_{1}i\varepsilon$, $i=0, 1, 2, \cdots, k$. It is easy to see from the definition of $G_{1}(s)$ in \eqref{ineq-s-va} that
\begin{align*}
G_1(t_{i+1})\leq C(i+1)^{3}\va^{\frac{2\gamma}{1+\gamma}}.
\end{align*}
Taking $s=t_{i+1}$ and $t=t_{i}$ in \eqref{ineq-F111_in}, we have the following iteration formula
$$F(t_{i})\leq\,\frac{1}{4}F(t_{i+1}) +C(i+1)^{3}\va^{\frac{2\gamma}{1+\gamma}}.$$
After $k$ iterations, and by virtue of \eqref{energy-nabla-w11}, we have
\[F(t_{0})\leq (\frac{1}{4})^{k}F(t_{k})+C\va^{\frac{2\gamma}{1+\gamma}}\sum_{i=0}^{k-1}(\frac{1}{4})^i(i+1)^{3} \leq C\va^{\frac{2\gamma}{1+\gamma}}.\]
This implies \eqref{energy_w11_inomega_z1} with $\delta(z_1)\geq\,\va$. 

{\bf Case 2.} For $\va^{\frac{1}{1+\gamma}}\leq|z_1|\leq\,R_{1}$ and $0<s<|z_1|$, we have $\frac{1}{C}|z_1|^{1+\gamma}\leq\delta(z_1)\leq\,C|z_1|^{1+\gamma}$. Estimates \eqref{energy_nabla_w11_square_in} and \eqref{ineq-s-va}  become, respectively,
\begin{align}\label{energy_nabla_w11_square}
\int_{\widehat{\Omega}_{s}(z_1)}|w|^{2}\ dx
\leq&\,C|z_1|^{2(1+\gamma)}\int_{\widehat{\Omega}_{s}(z_1)}|\nabla{w}|^{2}\ dx, \quad\mbox{if}~\,0<s<\frac{2}{3}|z_1|,
\end{align}
and
\begin{align}\label{ineq-H-z}
&\int_{\widehat{\Omega}_{s}(z_1)} |\mathbb{C}^0e(\bar{u}_1^1)-\mathcal{M}_1|^2 \ dx\notag \\
& \leq C\Big(\frac{s^3}{|z_1|^{3+\gamma}}+ \frac{s}{|z_1|^{1-\gamma}}
 +\frac{s^{3-2\gamma}}{|z_1|^{3-\gamma-2\gamma^2}}+\frac{s^{1+2\gamma}}{|z_1|^{1+\gamma+2\gamma^2}}\Big)=:G_2(s).
\end{align}
In view of \eqref{ineq-iteration-w1}, and \eqref{energy_nabla_w11_square}, estimate \eqref{ineq-F111_in} becomes 
\begin{equation}\label{ineq-ft-fs}
F(t)\leq\,\left(\frac{c_{2}|z_1|^{1+\gamma}}{s-t}\right)^{2}F(s)+CG_2(s),
\quad\forall~0<t<s<\frac{2}{3}|z_1|,
\end{equation}
where $c_2$ is another fixed constant.
Let $k=\left[\frac{1}{4c_{2}|z_1|^{\gamma}}\right]$ and $t_{i}=\delta(z_1)+2c_{2}i\,|z_1|^{1+\gamma}$, $i=0,1,2,\cdots,k$. Recalling the definition of $H(s)$ in \eqref{ineq-H-z}, one has
\begin{align*}
G_2(t_{i+1}) \leq C(i+1)^{3}|z_1|^{2\gamma}.
\end{align*}
Then, taking $s=t_{i+1}$ and $t=t_{i}$ in \eqref{ineq-ft-fs}, the iteration formula is
$$F(t_{i})\leq\,\frac{1}{4}F(t_{i+1}) +C(i+1)^{3}|z_1|^{2\gamma}.$$
After $k$ iterations, and using \eqref{energy-nabla-w11} again, 
\begin{equation*}
F(t_{0}) \leq (\frac{1}{4})^{k}F(t_{k})+C|z_1|^{2\gamma}\sum_{i=0}^{k-1}(\frac{1}{4})^i(i+1)^{3}\leq C|z_1|^{2\gamma}.
\end{equation*}
Thus, \eqref{energy_w11_inomega_z1} is proved with $\delta(z')\geq \frac{1}{C}|z_1|^{1+\gamma}$.

{\bf STEP 3. Rescaling and $L^{\infty}$ estimates of $|\nabla w|$.}

Making the following change of variables on $\widehat{\Omega}_{\delta(z_1)}(z_1)$, as in \cite{bll,bll2}, 
\begin{equation*}
\left\{
\begin{aligned}
&x_1-z_1=\delta(z_1) y_1,\\
&x_2=\delta(z_1) y_2,
\end{aligned}
\right.
\end{equation*}
then $\widehat{\Omega}_{\delta(z_1)}(z_1)$ becomes $Q_{1}$ of nearly unit size, where
\begin{align*}
Q_r=\Big\{y\in \R^2 :& -\frac{\va}{2\dt(z_1)} +\frac{1}{\dt(z_1)}h_2(\dt(z_1) y_1 +z_1) \\
&  < y_2 < \frac{\va}{2\dt(z_1)} +\frac{1}{\dt(z_1)}h_1(\dt(z_1) y_1 +z_1), \, |y_1 |< r  \Big\},
\end{align*}
for $r\leq1$, and the top and
bottom boundaries
become
\[
\Gamma^+_r=\left\{ y\in \R^2 \, : \,
y_2=\frac{\varepsilon}{2\delta(z_1)}+\frac{1}{\delta(z_1)}h_{1}(\delta(z_1) y_1+z_1),\quad|y_1|<r\right\},\]
and
\[\Gamma^-_r=\left\{y\in \R^2 \, : \,y_2=-\frac{\va}{2\delta(z_1)}+\frac{1}{\delta(z_1)}h_{2}(\delta(z_1) y_1+z_1), \quad |y_1|<r \right\}.\]
We denote
\[ \widetilde{w}(y_1, y_2):= w(\dt(z_1) y_1 + z_1 , \dt(z_1) y_2), \quad \widetilde{u}(y_1 , y_2):=\bar{u}_1^1(\dt(z_1) y_1 +z_1, \dt(z_1) y_2), \]
 for $(y_1, y_2)\in Q_1$. From \eqref{w20'}, we see that $\widetilde{w}$ satisfies
\begin{equation}\label{equ-w-divf-Q1}
\left\{ \begin{aligned}
-\ptl_j\big(C_{ijkl}\ptl_l \widetilde{w}^{(k)} \big) &=\ptl_j\big(C_{ijkl}\ptl_l\widetilde{u}^{(k)}\big)\quad &\text{in}\quad Q_1, \\
\widetilde{w}&=0 \quad &\text{on} \quad \Gamma_1^{\pm}.
\end{aligned}\right.
\end{equation}

Now we apply Theorem \ref{lem-lp-esti} for \eqref{equ-w-divf-Q1} with $\tilde{f}_{ij}= C_{ijkl} \ptl_l \widetilde{u}^{(k)}$. Noticing that
\begin{equation*}
[C_{ijkl} \ptl_l \widetilde{u}^{(k)}]_{\gamma, \, Q_1} \leq C [\nabla \widetilde{u}]_{\gamma,\, Q_1},
\end{equation*}
 we obtain 
\begin{equation}\label{esti-l-infty-poin}
\|\widetilde{w}\|_{L^\infty (Q_{1/2})} \leq C\left(\|\widetilde{w}\|_{H^1( Q_1)}+ [\nabla \widetilde{u}]_{\gamma,\, Q_1} \right).
\end{equation}
Applying Theorem \ref{lem-global-C1alp-estimates} for \eqref{equ-w-divf-Q1} with $\tilde{f}_{ij}= C_{ijkl} \ptl_l \widetilde{u}^{(k)}$ on $Q_{1/2}$,  we have
\begin{equation*}\label{esti-C1alp}
 \|\widetilde{w}\|_{C^{1, \, \gamma}(Q_{1/4} )} \leq C\left( \|\widetilde{w}\|_{L^\infty( Q_{1/2})} + [\nabla \widetilde{u}]_{\gamma,\, Q_{1/2}} \right).
\end{equation*}
This, combining with \eqref{esti-l-infty-poin} and using the Poincar\'{e} inequality, yields  
\[\|\nabla \widetilde{w} \|_{L^\infty(Q_{1/4})} \leq  C \left(\|\nabla \widetilde{w}\|_{L^2( Q_1)}+ [\nabla \widetilde{u}]_{\gamma, \, Q_1 } \right).\]

Rescaling back to the original region $\widehat{\om}_{\dt(z_1)}(z_1)$,
\begin{equation}\label{ineq-scale-original}
\|\nabla w\|_{L^\infty( \widehat{\Omega}_{\delta(z_1)/4}(z_1))}\leq \frac{C}{\delta(z_1)}\left( \|\nabla w\|_{L^2( \widehat{\Omega}_{\delta(z_1)}(z_1))}+\delta(z_1)^{1+\gamma}[\nabla \bar{u}_1^1]_{\gamma, \, \widehat{\Omega}_{\delta(z_1)}(z_1) }\right).
\end{equation}
Here, combining  \eqref{ineq-semi-holder-norm} and  \eqref{def-function1}, one has
\begin{equation}\label{semi-norm-v12}
[\nabla \bar{u}_1^1]_{\gamma, \, \widehat{\Omega}_{\delta(z_1)}(z_1) }\leq [\nabla \bar{u}]_{\gamma, \, \widehat{\Omega}_{\delta(z_1)}(z_1)}\leq C\delta(z_1)^{-\gamma-\frac{1}{1+\gamma}}.
\end{equation}
By virtue of \eqref{semi-norm-v12} and \eqref{energy_w11_inomega_z1}, we have for $(z_1, x_2)\in \widehat{\Omega}_{\delta(z_1)/4}(z_1)$ and $|z_1| \leq R_1$,
\begin{align*}
|\nabla w(z_1, x_2)|&\leq \|\nabla w\|_{L^\infty( \widehat{\Omega}_{\delta(z_1)/4}(z_1))}\\
&\leq C\left(\delta(z_1)^{-1}\cdot\delta(z_1)^{\frac{\gamma}{1+\gamma}}+\delta(z_1)^{\gamma}\cdot\delta(z_1)^{-\gamma-\frac{1}{1+\gamma}}\right)\\
&\leq C\delta(z_1)^{-\frac{1}{1+\gamma}}.
\end{align*}
\eqref{nabla-w-i0} is proved, recalling that $\delta(z_1)\geq \frac{1}{C}(\va+|z_1|^{1+\gamma})$. 

This, together with \eqref{nablau_bar-interior}, yields for $x\in \Omega_{R_1}$,
\begin{equation*}
|\nabla v_1^1(x)|\leq |\nabla \bar{u}_1^1(x)|+|\nabla (v_1^1- \bar{u}_1^1)(x)|\leq\frac{C}{\va+|x_1|^{1+\gamma}},
\end{equation*}
and
\begin{equation*}
|\nabla v_1^1(x)|\geq |\nabla \bar{u}_1^1(x)|-|\nabla (v_1^1-\bar{u}_1^1)(x)|\geq \frac{1}{C(1+|x_1|^{1+\gamma})}.
\end{equation*}
Thus, the proof of \eqref{nabla-v-i0} is completed.
\end{proof}

\begin{proof}[Proof of \eqref{nabla-w-i3} and \eqref{nabla-v-i3} when $d=2$]  Recalling that
\[\bar{u}_1^3=(x_2\bar{u},-x_1\bar{u})^{T},\quad \text{and}\quad \bar{u}_2^3=(x_2\underline{u}, -x_1\underline{u})^{T}, \]
from \eqref{ubar}--\eqref{nablau_bar-interior}, we obtain for $i=1,\,2$,
\begin{equation}\label{nabla-baru-3}
|\nabla \bar{u}_i^3(x)|\leq \frac{C(\va+|x_1|)}{\va+|x_1|^{1+\gamma}}\quad x\in \Omega_{R_1} \quad \text{and} \quad |\nabla \bar{u}_i^3(x)|\leq C\quad x\in \Omega\setminus\Omega_{R_1}.
\end{equation}
Since the proof is similar to that of \eqref{nabla-w-i0}, we only list some key differences and take $i=1$ for instance. For simplicity, denote $w:=w_1^3$, then $w$ satisfies 
	 \begin{equation*}
	 \left\{
	 \begin{aligned}
	 -\mathcal{L}_{\lam,\mu} w&=\nabla\cdot(\mathbb{C}^0e(\bar{u}_1^3))=\nabla\cdot(\mathbb{C}^0e(\bar{u}_1^3)-\mathcal{M}) \quad&\mbox{in}~\Omega,\quad\\
	 w&=0\quad&\mbox{on}~\partial \Omega,
	 \end{aligned}\right.
	 \end{equation*}
for any constant matrix $\mathcal{M}=(\mathfrak{a}_{ij})$, $ i, \, j=1,\, 2$.

First, the total energy is bounded, that is,
\begin{equation}\label{ineq-nabla-w13-bound}
\int_{\Omega}|\nabla w|^2 dx \leq C.
\end{equation}
Indeed, by using \eqref{nabla-baru-3} and the H\"{o}lder inequality, one has
\begin{align*}
\int_{\Omega}\big( \nabla\cdot(\mathbb{C}^0e(\bar{u}_1^3))\big)\cdot w \ dx&=-\int_{\Omega}  \Big(\mathbb{C}^0e(\bar{u}_1^3),  \nabla w \Big) \ dx\\
&\leq C\Big(\int_{\Omega}|\nabla w|^2 dx\Big)^{\frac{1}{2}} \Big(\int_{\Omega_{R_1}}|\nabla \bar{u}_1^3|^2 dx +\int_{\Omega\setminus\Omega_{R_1}} |\nabla \bar{u}_1^3|^2 dx\Big)^{\frac{1}{2}}\\
&\leq C\Big(\int_{\Omega}|\nabla w|^2 dx\Big)^{\frac{1}{2}}.
\end{align*}
Combining with \eqref{ineq-kohn-1}, we have
\begin{equation*}
\int_{\Omega}|\nabla w|^2 dx \leq C\Big(\int_{\Omega}|\nabla w|^2 dx\Big)^{\frac{1}{2}}.
\end{equation*}

Next, we estimate the local energy estimates:
\begin{equation}\label{engy-w13}
\int_{\widehat{\Omega}_{\delta(z_1)}(z_1)}\left|\nabla{w}\right|^{2}\ dx\leq
C\delta(z_1)^{2}.
\end{equation}
As in the proof of \eqref{energy_w11_inomega_z1}, we have, instead of \eqref{ineq-iteration-w1}, 
\begin{equation*}
\int_{\widehat{\Omega}_{t}(z_1)}|\nabla w|^{2}\ dx \leq\,\frac{C}{(s-t)^{2}}\int_{\widehat{\Omega}_{s}(z_1)}|w|^{2}\ dx +C\int_{\widehat{\Omega}_{s}(z_1)} |\mathbb{C}^0e(\bar{u}_1^3)-\mathcal{M}|^2 \ dx,
\end{equation*}
where 
\begin{equation*}\label{def-Ce13}
\mathbb{C}^0e(\bar{u}_1^3)=\begin{pmatrix}
~2\mu x_2\ptl_1 \bar{u} + \lam(x_2\ptl_1\bar{u} -x_1\ptl_2\bar{u} ) &  \mu(x_2 \ptl_2\bar{u} -x_1\ptl_1\bar{u} )~\\
~~\\
~\mu( x_2 \ptl_2\bar{u} -x_1\ptl_1\bar{u} ) & -2\mu x_1\ptl_2 \bar{u} + \lam ( x_2\ptl_1\bar{u} -x_1\ptl_2\bar{u})~
\end{pmatrix},
\end{equation*}
and take $\mathcal{M}=\mathcal{M}_2$, its average on $\widehat{\Omega}_{s}(z_1)$,
\begin{equation*}
\mathcal{M}_2:=\pvint_{\widehat{\Omega}_{s}(z_1)} \mathbb{C}^0e(\bar{u}_1^3(y)) \ dy.
\end{equation*}
By using \eqref{ineq-semi-holder-norm}, \eqref{ineq-x-prime}, \eqref{nablau_bar-interior} and the following inequality
\begin{equation}\label{ineq-holder}
[x_i\ptl_j\bar{u}]_{\gamma,\, \widehat{\Omega}_{s}(z_1)}\leq \|x_i\|_{L^\infty(\widehat{\Omega}_{s}(z_1))}[\ptl_j \bar{u}]_{\gamma,\,\widehat{\Omega}_{s}(z_1)}+\|\ptl_j \bar{u}\|_{L^\infty(\widehat{\Omega}_{s}(z_1))} [x_i]_{\gamma,\, \widehat{\Omega}_{s}(z_1)},
\end{equation}
where $i, j=1, 2$, we have 
\begin{align}\label{ineq-semi-ui3}
&|\mathbb{C}^0e(\bar{u}_1^3)-\mathcal{M}_2|\notag\\
&\leq C\sum\limits_{i,j=1}^2[x_i\ptl_j\bar{u}]_{\gamma,\, \widehat{\Omega}_{s}(z_1)}\Big(s^\gamma+\delta(z_1)^\gamma\Big)\notag\\
&\leq C\Big(\delta(z_1)^{-\frac{1}{1+\gamma}}s^{1-\gamma}+\delta(z_1)^{-1}s^{1-\gamma}+\delta(z_1)^{-\gamma}+\delta(z_1)^{-\frac{\gamma^2}{1+\gamma}}\Big)\Big(s^\gamma+\delta(z_1)^\gamma\Big)\notag\\
&\leq C\Big(\delta(z_1)^{-1}s^{1-\gamma}+\delta(z_1)^{-\gamma}\Big)\Big(s^\gamma+\delta(z_1)^\gamma\Big).
\end{align}

{\bf Case 1.} For $|z_1|\leq \va^{\frac{1}{1+\gamma}}$, we still have \eqref{energy_nabla_w11_square_in} for $0<s<\va^{\frac{1}{1+\gamma}}$. By using \eqref{ineq-semi-ui3}, a direct calculation leads to 
\begin{equation}\label{ineq-ce-u3-1}
\int_{\widehat{\Omega}_{s}(z_1)} |\mathbb{C}^0e(\bar{u}_1^3)-\mathcal{M}_2|^2 \ dx\leq C(\va^{-1}s^3+\va^{2\gamma-1}s^{3-2\gamma}+\va^{1-2\gamma}s^{1+2\gamma}+\va s)=:\overline{G}_1(s).
\end{equation}
Instead of \eqref{ineq-F111_in}, we have 
\begin{equation*}
F(t)\leq \Big(\frac{c_1\va}{s-t}\Big)^2F(s) + C\overline{G}_1(s), \quad \forall \ 0<t<s< \va^{\frac{1}{1+\gamma}}.
\end{equation*}
By applying the iteration process as in the proof of \eqref{energy_w11_inomega_z1}, we obtain 
\begin{equation*}
\int_{\widehat{\Omega}_{\delta(z_1)}(z_1)}\left|\nabla{w}\right|^{2}\ dx \leq C\va^{2}.
\end{equation*}	 
	 
{\bf Case 2.}  For $\va^{\frac{1}{1+\gamma}}\leq |z_1|\leq R_1$, estimate \eqref{energy_nabla_w11_square} remains the same. Estimate \eqref{ineq-ce-u3-1} becomes 
 \begin{align}\label{ineq-bar-h-s}
 &\int_{\widehat{\Omega}_{s}(z_1)} |\mathbb{C}^0e(\bar{u}_1^3)-\mathcal{M}_2|^2 \ dx\notag \\
 &\leq C(|z_1|^{-(1+\gamma)}s^{3} + |z_1|^{(1+\gamma)(1-2\gamma )}s^{1+2\gamma}+|z_1|^{(1+\gamma)(2\gamma-1)}s^{3-2\gamma}+|z_1|^{1+\gamma}s)=:\overline{G}_2(s).
 \end{align}
Estimate \eqref{ineq-ft-fs} becomes 	 
\begin{equation}\label{intera-z-F}
F(t)\leq \Big(\frac{c_2|z_1|^{1+\gamma}}{s-t}\Big)^2F(s)+C\overline{G}_2(s), \quad \forall \ 0<t<s<\frac{2}{3}|z_1|.
\end{equation}
By iteration, we have
\begin{equation*}
F(t_0) \leq C|z_1|^{2(1+\gamma)}.
\end{equation*}
Thus, \eqref{engy-w13} is proved.

In view of \eqref{ineq-semi-ui3},  we have
\begin{equation*}
[\mathbb{C}^0e(\bar{u}_1^3)]_{\gamma, \widehat{\Omega}_{\delta(z_1)}(z_1)} \leq C[\nabla \bar{u}_1^3]_{\gamma, \widehat{\Omega}_{\delta(z_1)}(z_1)}\leq C \delta(z_1)^{-\gamma}.
\end{equation*}
Similar to the proof of \eqref{nabla-w-i0}, by using \eqref{engy-w13}, Theorem \ref{lem-global-C1alp-estimates} and Lemma \ref{lem-lp-esti},
\begin{align*}
\|\nabla w\|_{L^{\infty}(\widehat{\Omega}_{\delta(z_1)/4}(z_1)) }& \leq \frac{C}{\delta(z_1)}\big( \|\nabla w\|_{L^2(\widehat{\Omega}_{\delta(z_1)}(z_1)} +\delta(z_1)^{1+\gamma}[\mathbb{C}^0e(\bar{u}_1^3)]_{\gamma, \widehat{\Omega}_{\delta(z_1)}(z_1)} \big)\\
&\leq C\delta(z_1)+C\leq C.
\end{align*}
Thus, \eqref{nabla-w-i3} is proved. Consequently, \eqref{nabla-v-i3} easily follows from the definition of $w_i^3$, $i=1, 2$, \eqref{def-function1}, \eqref{nabla-w-i3} and \eqref{nabla-baru-3}.
\end{proof}

\begin{proof}[ Proof of \eqref{v1+v2_bounded1} and \eqref{esti-infty-out-12}]
The proof of \eqref{v1+v2_bounded1} follows from theorem 1.1 in \cite{llby} that the gradient is bounded, because the displacement takes the same constant value on $\ptl D_1$ and $\ptl D_2$.
	
For \eqref{esti-infty-out-12}, thanks to \eqref{ubar-out}, \eqref{energy-nabla-w11} for $w=w_i^l$ and \eqref{ineq-nabla-w13-bound} for $w=w_i^3$, $i=1,\, 2$, one has
\begin{equation*}
\int_{\Omega\setminus \Omega_{\frac{R_1}{2}}} |\nabla v_i^l|^2 \ dx\leq 2\int_{\Omega\setminus \Omega_{\frac{R_1}{2}}}\Big(|\nabla \bar{u}_i^l|^2+|\nabla w_i^l|^2\Big) dx \leq C.
\end{equation*}
Then \eqref{esti-infty-out-12} follows from the classical elliptic estimates (see \cite{adn1} or \cite{bll}). 
\end{proof}

\subsection{Proof of Proposition \ref{lem-rest-terms}}\label{prop-2.4}

\begin{proof}[Proof of Proposition \ref{lem-rest-terms}]
By the trace theorem and a minor modification of the proof of lemma 4.1 in \cite{bll}, it is easy to obtain \eqref{esti-C12}.

To estimate $|C_1^k- C_2^k|$, $k=1, \, 2$, we denote 
\begin{equation}\label{def-u-b} 
u_b:=\sum\limits_{l=1}^3 C_2^l (v_1^l+v_2^l) + v_0,
\end{equation}
and
\begin{align}
& a_{ij}^{k l}:=-\int_{\ptl D_j}\frac{\ptl v_i^k}{\ptl \nu}\Big|_+\cdot \psi^l, \quad  i, \, j =1, \, 2,~~k, \, l= 1, \, 2, \, 3,\label{def-aij-bt}\\
& \widetilde{b}_j^l:=\widetilde{b}_j^l[\varphi]:=\int_{\ptl D_j} \frac{\ptl u_b}{\ptl \nu}\Big|_+\cdot \psi^l,\quad  j =1, \, 2,~~ l= 1, \, 2, \, 3. \label{def-b-j}
\end{align}
Then, we rewrite \eqref{equ-decompositon} for $j=1$ as the following equation: 
\begin{equation*}
\sum\limits_{k=1}^3 (C_1^k-C_2^k) a_{11}^{kl}-\widetilde{b}_1^l =0,  \quad l=1, \, 2, \, 3. 
\end{equation*}
Then, in order to solve $C_1^1-C_2^1$ and $C_1^2-C_2^2$, it can be rewritten in matrix form: 
\begin{equation}\label{equ-two-lower}
\mathcal{A}X:= \begin{aligned}
\begin{pmatrix}
~a_{11}^{11}  & a_{11}^{21} & a_{11}^{31} ~\\
~~\\
~a_{11}^{12} & a_{11}^{22} & a_{11}^{32} ~\\
~~\\
~a_{11}^{13} & a_{11}^{23} & a_{11}^{33}~ 
\end{pmatrix}
\begin{pmatrix}
~C_1^1-C_2^1~\\
~~\\
~C_1^2-C_2^2~\\
~~\\
~C_1^3-C_2^3~
\end{pmatrix}
=\begin{pmatrix}
~\widetilde{b}_1^1~\\
~~\\
~\widetilde{b}_1^2~\\
~~\\
~\widetilde{b}_1^3~
\end{pmatrix}.
\end{aligned}
\end{equation}
To solve it, we need the following lemma. 
\begin{lemma}\label{lemma-a11kk} 
$\mathcal{A}$ is positive definite, and
   \begin{align}
	\frac{1}{C}\va^{-\frac{\gamma}{1+\gamma}}  \leq a_{11}^{kk}  \leq &C\va^{-\frac{\gamma}{1+\gamma}},  \, k= 1,2; \qquad \frac{1}{C} \leq a_{11}^{33} \leq C;\label{esti-a11bb}\\
	|a_{11}^{12}|=&|a_{11}^{21}|\leq C|\ln \va|;\label{esti-a11-1221}\\
	 |a_{12}^{33}|=|a_{21}^{33}|\leq C, &\quad |a_{ij}^{k3}|=|a_{ji}^{3k}|\leq C, \quad i,\,j,\, k=1, \, 2;\label{esti-a-ij-3b}\\
	|\widetilde{b}_j^l|\leq C, &\quad j=1,\,2, \ l=1,\, 2, \, 3.\label{esti-bjk}	
	\end{align}
\end{lemma}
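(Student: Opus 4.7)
The strategy is to realize each $a_{ij}^{kl}$ as a symmetric bilinear form in $v_i^k$ and $v_j^l$, so that every estimate reduces to explicit computations on the auxiliary functions $\bar u_i^l$ controlled via the pointwise bound on $\nabla w_i^l$ from Proposition \ref{prop-interior}. Using $\mathcal L_{\lam,\mu}v_i^k=0$ in $\Omega$ together with the Dirichlet conditions in \eqref{equ_v1} and integration by parts, I would first derive
\[
a_{ij}^{kl}=\int_{\Omega}\bigl(\mathbb C^{0}e(v_i^k),\,e(v_j^l)\bigr)\,dx,
\]
from which the symmetries $a_{ij}^{kl}=a_{ji}^{lk}$ (and in particular $|a_{11}^{12}|=|a_{11}^{21}|$ and $|a_{ij}^{k3}|=|a_{ji}^{3k}|$) follow directly. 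Positive definiteness of $\mathcal A=(a_{11}^{kl})_{k,l=1}^{3}$ comes from $\xi^{T}\mathcal A\xi=\int_{\Omega}(\mathbb C^{0}e(V),e(V))\,dx\geq 0$ with $V:=\sum_{k}\xi^{k}v_{1}^{k}$; equality forces $V\in\Psi$, and $V|_{\partial D}=0$ then forces $V\equiv 0$, hence $\xi=0$ by the linear independence of $\{\psi^{l}\}$ on $\partial D_{1}$.

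For the diagonal entries, the variational characterization of $v_{1}^{k}$ gives immediately $a_{11}^{kk}\leq\int_{\Omega}(\mathbb C^{0}e(\bar u_{1}^{k}),e(\bar u_{1}^{k}))\,dx$. For $k=1,2$ the entry $\partial_{2}\bar u=1/\delta(x_{1})$ dominates, and the substitution $x_{1}=\va^{1/(1+\gamma)}t$ evaluates $\int_{\Omega_{R_{1}}}\delta(x_{1})^{-2}dx$ to $C\va^{-\gamma/(1+\gamma)}$; for $k=3$ the cancellations $|x_{2}\partial_{2}\bar u|,|x_{1}\partial_{1}\bar u|\leq 1$ in $\Omega_{R_{1}}$ give $|e(\bar u_{1}^{3})|\leq C$. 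For the matching lower bound when $k=1,2$, I would write $v_{1}^{k}=\bar u_{1}^{k}+w_{1}^{k}$ and use the explicit entries $(e(\bar u_{1}^{1})_{12})^{2}=(\partial_{2}\bar u)^{2}/4$ and $(e(\bar u_{1}^{2})_{22})^{2}=(\partial_{2}\bar u)^{2}$ to obtain $|e(\bar u_{1}^{k})|^{2}\geq c\,\delta(x_{1})^{-2}$; combining with $|\nabla w_{1}^{k}|\leq C\,\delta(x_{1})^{-1/(1+\gamma)}$ from \eqref{nabla-w-i0}, which is strictly smaller than $\delta(x_{1})^{-1}$ for $\delta$ small, the elementary inequality $|a+b|^{2}\geq\tfrac{1}{2}|a|^{2}-|b|^{2}$ gives $|e(v_{1}^{k})|^{2}\geq c\,\delta(x_{1})^{-2}$ on the singular region, which integrates to the claimed rate $\va^{-\gamma/(1+\gamma)}/C$. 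The lower bound on $a_{11}^{33}$ follows by contradiction: if $a_{11}^{33}\to 0$ along some subsequence $\va\to 0$, then the Korn and Poincar\'e inequalities, together with $v_{1}^{3}|_{\partial D}=0$, force $\|v_{1}^{3}\|_{H^{1}(\Omega)}\to 0$, contradicting the fixed Dirichlet datum $v_{1}^{3}|_{\partial D_{1}}=\psi^{3}$.

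The off-diagonal and boundary estimates all rest on one common trick: since $w_i^l=v_i^l-\bar u_i^l\in H^{1}_{0}(\Omega)$ and the other factor in the bilinear form solves the homogeneous equation, one may swap $v_{j}^{l}$ for $\bar u_{j}^{l}$ inside the integral without any change. For $a_{11}^{12}=\int_{\Omega}(\mathbb C^{0}e(v_{1}^{1}),e(\bar u_{1}^{2}))\,dx$, splitting $v_{1}^{1}=\bar u_{1}^{1}+w_{1}^{1}$ reduces the leading term to $(\lam+\mu)\int\partial_{1}\bar u\,\partial_{2}\bar u\,dx$, whose $x_{2}$-integral equals $|x_{1}|^{\gamma}/\delta(x_{1})$ and whose $x_{1}$-integral is $O(|\ln\va|)$; the remainder $\int(\mathbb C^{0}e(w_{1}^{1}),e(\bar u_{1}^{2}))\,dx$ is bounded by $C\int\delta(x_{1})^{-1-1/(1+\gamma)}dx$, which after the rescaling $x_{1}=\va^{1/(1+\gamma)}t$ also contributes $O(|\ln\va|)$. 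The bounds $|a_{ij}^{k3}|\leq C$ follow from $|e(\bar u_{j}^{3})|\leq C$ combined with $\int_{\Omega_{R_{1}}}|\nabla v_{i}^{k}|\,dx=O(1)$, which is immediate from \eqref{nabla-v-i0}--\eqref{nabla-v-i3} after the inner $x_{2}$-integration cancels a factor of $\delta(x_{1})$. Finally, writing $\widetilde b_{j}^{l}=\pm\int_{\Omega}(\mathbb C^{0}e(u_{b}),e(\bar u_{j}^{l}))\,dx$ and invoking $\|\nabla u_{b}\|_{L^{\infty}(\Omega)}\leq C$ from \eqref{v1+v2_bounded1} and \eqref{esti-C12} delivers $|\widetilde b_{j}^{l}|\leq C$ by the same integration of $|\nabla \bar u_{j}^{l}|$ over $\Omega$.

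The main obstacle is the lower bound on $a_{11}^{kk}$ for $k=1,2$: the singular part of $e(v_{1}^{k})$ lives in $e(\bar u_{1}^{k})$ with scaling $\delta^{-1}$, while the correction $e(w_{1}^{k})$ must be shown to be of strictly smaller pointwise order $\delta^{-1/(1+\gamma)}$. This gap in the exponents, available only after the Campanato/$W^{1,p}$ iteration of Section \ref{sub-3.1}, is exactly what allows the singular part to dominate and yields the sharp coefficient of the blow-up rate; the same gap underlies the $|\ln\va|$ bound on $a_{11}^{12}$ through the rescaling computation above.
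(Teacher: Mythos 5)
Your overall strategy is sound and parallels the paper's proof: both pass through the representation $a_{ij}^{kl}=\int_{\Omega}\bigl(\mathbb C^{0}e(v_i^k),e(v_j^l)\bigr)\,dx$. Your variations are legitimate and in some places tidier: swapping $v_j^l$ for $\bar u_j^l$ in volume integrals via $w_j^l\in H^1_0(\Omega)$ replaces the paper's boundary-integral computation for $a_{11}^{12}$ with a cleaner rescaling, and your pointwise $|a+b|^2\geq\tfrac12|a|^2-|b|^2$ argument for the lower bound of $a_{11}^{kk}$ ($k=1,2$) uses the $\delta^{-1/(1+\gamma)}$ bound on $\nabla w_1^k$ from \eqref{nabla-w-i0} where the paper instead relies on the global energy estimate $\int_\Omega|e(w_1^k)|^2\,dx\leq C$; both get the gap in exponents that makes the singular part dominate. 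The compactness argument for $a_{11}^{33}\geq 1/C$ is a standard alternative to the paper's citation of \cite{bll}, and your identification $\widetilde b_j^l=\pm\int_\Omega(\mathbb C^0 e(u_b),e(\bar u_j^l))\,dx$ together with $\|\nabla u_b\|_{L^\infty}\leq C$ and $\nabla\bar u_j^l\in L^1(\Omega)$ is a clean route to \eqref{esti-bjk}.

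There is, however, one false intermediate claim. You assert $|e(\bar u_1^3)|\leq C$ in $\Omega_{R_1}$, citing $|x_2\partial_2\bar u|\leq 1$ and $|x_1\partial_1\bar u|\leq 1$. Those two products indeed control the off-diagonal entry $e(\bar u_1^3)_{12}=\tfrac12(x_2\partial_2\bar u-x_1\partial_1\bar u)$, and $e(\bar u_1^3)_{11}=x_2\partial_1\bar u$ is $O(|x_1|^\gamma)$. But the diagonal entry $e(\bar u_1^3)_{22}=\partial_2\bigl(-x_1\bar u\bigr)=-x_1\partial_2\bar u=-x_1/\delta(x_1)$ is not bounded: near $|x_1|\sim\va^{1/(1+\gamma)}$ it is of size $\va^{-\gamma/(1+\gamma)}$, exactly as encoded in the paper's estimate $|\nabla\bar u_i^3|\leq C(\va+|x_1|)/(\va+|x_1|^{1+\gamma})$ in \eqref{nabla-baru-3}. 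The conclusions $a_{11}^{33}\leq C$ and $|a_{ij}^{k3}|\leq C$ still hold, but you must argue through the integrals rather than a pointwise bound: after the $x_2$-integration, $\int_{\Omega_{R_1}}|e(\bar u_1^3)|^2\,dx\leq C\int_{|x_1|\leq R_1}\frac{(\va+|x_1|)^2}{\va+|x_1|^{1+\gamma}}\,dx_1\leq C$, and for $|a_{ij}^{k3}|$ combine $|\nabla v_i^k|\leq C/\delta(x_1)$ with $|e(\bar u_j^3)|\leq C(\va+|x_1|)/\delta(x_1)$ to get $\int_{\Omega_{R_1}}|\nabla v_i^k|\,|e(\bar u_j^3)|\,dx\leq C\int_{|x_1|\leq R_1}\frac{\va+|x_1|}{\va+|x_1|^{1+\gamma}}\,dx_1\leq C$. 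The paper sidesteps the issue by expanding $(\mathbb C^0\nabla\bar u_1^1,\nabla\bar u_1^3)$ entrywise and applying Cauchy--Schwarz with $\int_{\Omega_{R_1}}|\nabla\bar u_1^3|^2\,dx\leq C$, never invoking a pointwise bound on $e(\bar u_1^3)$.
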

The proof of Lemma \ref{lemma-a11kk} will be given later. It follows from the Cramer's rule that
\begin{equation}\label{equal-c1-c2-k}
C_1^k-C_2^k=\frac{\mathrm{det}\mathcal{A}_k}{\mathrm{det}\mathcal{A}},  \quad k=1, \,2, 
\end{equation}
where
\begin{align*}
\mathcal{A}_1:=\begin{pmatrix}
~\widetilde{b}_1^{1}  & a_{11}^{21} & a_{11}^{31} ~\\
~~\\
~\widetilde{b}_1^{2} & a_{11}^{22} & a_{11}^{32} ~\\
~~\\
~\widetilde{b}_1^{3} & a_{11}^{23} & a_{11}^{33} ~
\end{pmatrix},
\quad
\mathcal{A}_2:=\begin{pmatrix}
~a_{11}^{11}  & \widetilde{b}_1^{1} & a_{11}^{31} ~\\
~~\\
~a_{11}^{12} & \widetilde{b}_1^{2} & a_{11}^{32} ~\\
~~\\
~a_{11}^{13} & \widetilde{b}_1^{3} & a_{11}^{33} ~
\end{pmatrix}.
\end{align*}
By using Lemma \ref{lemma-a11kk}, we have
\begin{equation}\label{det-A1}
\mathrm{det} \mathcal{A}_1= a_{11}^{22}\big( \widetilde{b}_1^{1}\,a_{11}^{33}-\widetilde{b}_1^3\,a_{11}^{31}\big) + O(|\ln \va|),
\end{equation}
and
\begin{equation}\label{det-A}
\mathrm{det}\mathcal{A}=a_{11}^{11}a_{11}^{22}a_{11}^{33}+O(\va^{-\frac{\gamma}{1+\gamma}}).
\end{equation}
Then, substituting \eqref{det-A1} and \eqref{det-A} into \eqref{equal-c1-c2-k} and using Lemma \ref{lemma-a11kk}, we have
\begin{equation*}
C_1^1-C_2^1=\frac{\mathrm{det}\mathcal{A}_1}{\mathrm{det}\mathcal{A}}=\frac{1}{a_{11}^{11}+O(1)}\Big( \widetilde{b}_1^1-\widetilde{b}_1^3\frac{a_{11}^{31}}{a_{11}^{33}}\Big) +O(\va^{\frac{2\gamma}{1+\gamma}}|\ln\va|).
\end{equation*}
Similarly,
\begin{equation*}
\mathrm{det} \mathcal{A}_2=a_{11}^{11} \big( \widetilde{b}_1^2\,a_{11}^{33}-\widetilde{b}_1^3\, a_{11}^{32} \big) +O(|\ln \va|),
\end{equation*}
and
\begin{equation*}
C_1^2-C_2^2=\frac{1}{a_{11}^{22}+O(1)}\Big(\widetilde{b}_1^2-\widetilde{b}_1^3\, \frac{a_{11}^{32}}{a_{11}^{33}} \Big)+O(\va^{\frac{2\gamma}{1+\gamma}}|\ln\va|).
\end{equation*}
Thus, in view of \eqref{esti-a11bb} and \eqref{esti-a-ij-3b}-\eqref{esti-bjk}, we have
\begin{equation*}
|C_1^k-C_2^k|\leq \frac{C}{|a_{11}^{kk}|-C} +C\va^{\frac{2\gamma}{1+\gamma}}|\ln\va| \leq C\va^{\frac{\gamma}{1+\gamma}},\quad k=1,\, 2.
\end{equation*}
The proof of Proposition \ref{lem-rest-terms} is completed.
\end{proof}

\begin{proof}[The proof of Lemma \ref{lemma-a11kk}]
There are some differences with lemma 4.4 in \cite{bll}. For readers' convenience, we sketch the proof in the following. From \eqref{def-aij-bt}, integrating by parts and using \eqref{equ_v1}, we notice that
\begin{equation*}
a_{ij}^{k l}=\int_{\Omega} \Big( \mathbb{C}^0 e(v_i^k), e(v_j^l) \Big)dx. 
\end{equation*}	 
For \eqref{esti-a11bb}, by using \eqref{elliptic}, \eqref{nabla-v-i0} and \eqref{esti-infty-out-12}, one has 
\begin{equation*}
a_{11}^{kk}=\int_{\Omega}\Big( \mathbb{C}^0e(v_1^k), e(v_1^k) \Big) dx\leq C \int_{\Omega} |\nabla v_1^k|^2 \ dx \leq C \va^{-\frac{\gamma}{1+\gamma}}, \quad \text{for} ~~k=1, \, 2.
\end{equation*} 
Moreover, by using \eqref{nablau_bar-interior}, \eqref{def-function1} and \eqref{energy-nabla-w11}, 
\begin{align*}
a_{11}^{11}&=\int_{\Omega} \Big( \mathbb{C}^0 e(v_1^1), e(v_1^1)\Big) dx \geq \frac{1}{C} \int_{\Omega}|e(v_1^1)|^2 dx \\
&\geq \frac{1}{C}\int_{\Omega} |e(\bar{u}_1^1)|^2 dx- C\int_{\Omega}|e(w_1^1)|^2 dx\\
&\geq \frac{1}{C}\int_{\Omega_{R_1}}\frac{1}{(\va+|x_1|^{1+\gamma})^2} \ dx -C \geq \frac{1}{C}\va^{-\frac{\gamma}{1+\gamma}}.
\end{align*}
By the same reason, one has 
\begin{equation*}
a_{11}^{22}\geq\frac{1}{C}\va^{-\frac{\gamma}{1+\gamma}}.
\end{equation*}
Since the estimate of $a_{11}^{33}$ is the same as in \cite{bll}, we omit it. Thus, \eqref{esti-a11bb} is proved.

For $a_{11}^{12},\, a_{11}^{21}$, we have
\begin{equation}\label{equi-12=21}
a_{11}^{12}=\int_{\Omega}\Big( \mathbb{C}^0 e(v_1^1), e(v_1^2) \Big) dx= \int_{\Omega}\Big( \mathbb{C}^0 e(v_1^2) ,  e(v_1^1)\Big) dx=a_{11}^{21}.
\end{equation}
Denote
\begin{equation*}
a_{11}^{12}=-\int_{\ptl D_1}\frac{\ptl v_1^1}{\ptl \nu}\Big|_+\cdot \psi^2=-\int_{\ptl D_1}\frac{\ptl \bar{u}_1^1}{\ptl \nu}\Big|_+\cdot \psi^2 -\int_{\ptl D_1}\frac{\ptl (v_1^1-\bar{u}_1^1)}{\ptl \nu}\Big|_+\cdot \psi^2=:-\mathrm{I}_1-\mathrm{I}_2.
\end{equation*}
We divide $\mathrm{I}_1$ further into two parts as follows
\begin{equation*}
\mathrm{I}_1=\int_{\ptl D_1\cap \mathcal{C}_{R_1}} \frac{\ptl \bar{u}_1^1}{\ptl \nu}\Big|_+\cdot \psi^2 + \int_{\ptl D_1\setminus \mathcal{C}_{R_1}} \frac{\ptl \bar{u}_1^1}{\ptl \nu}\Big|_+\cdot \psi^2, 
\end{equation*}
where the cylinder $\mathcal{C}_r$ is defined by 
\begin{equation}\label{def-cylinder}
\mathcal{C}_r:=\left\{x\in \R^2 : -\frac{\va}{2}+2\min\limits_{|x_1|=r}h_2(x_1)\leq 
x_2\leq \frac{\va}{2}+2\max\limits_{|x_1|=r}h_1(x_1), \quad |x_1|< r\right\}.
\end{equation}
Notice that the components of the normal vector on the portion $\ptl D_1\cap \mathcal{C}_{R_1}$ are
\begin{equation*}
n_1=\frac{\ptl_1 h_1(x_1)}{\sqrt{1+|\ptl_1h_1(x_1)|^2}},\quad n_2=\frac{-1}{\sqrt{1+|\ptl_1 h_1(x_1)|^2}}.
\end{equation*}
Then, 
\begin{align*}
\frac{\ptl \bar{u}_1^1}{\ptl \nu}\Big|_+\cdot \psi^2 \Big|_{\ptl D_1 \cap \mathcal{C}_{R_1}}&=\Big(\lam (\nabla \cdot \bar{u}_1^1) {\bf n} +\mu \big(\nabla \bar{u}_1^1+(\nabla \bar{u}_1^1)^{T}\big){\bf n}\Big)\cdot \psi^2\\
&=\lam \ptl_1 \bar{u} n_2 +\mu \ptl_2 \bar{u} n_1 .
\end{align*}
In view of \eqref{h1h_convex2}, \eqref{ubar-out} and \eqref{nablau_bar-interior}, we have
\begin{align*}
|\mathrm{I}_1|& \leq \Big|\int_{\ptl D_1\cap \mathcal{C}_{R_1}} \frac{\ptl \bar{u}_1^1}{\ptl \nu}\Big|_+\cdot \psi^2 \Big|+\Big|\int_{\ptl D_1\setminus \mathcal{C}_{R_1}} \frac{\ptl \bar{u}_1^1}{\ptl \nu}\Big|_+\cdot \psi^2\Big|\\
&\leq \int_{\ptl D_1\cap \mathcal{C}_{R_1}}| \lam \ptl_1 \bar{u} n_2 +\mu \ptl_2 \bar{u} n_1| \ dS +C\\
&\leq \int_{|x_1|\leq R_1}\frac{C|x_1|^\gamma}{\va+|x_1|^{1+\gamma}} \ dx_1 +C \leq C|\ln \va|.
\end{align*}
For $\mathrm{I}_2$, by dividing $\mathrm{I}_2$ into two parts like $\mathrm{I}_1$ and using \eqref{nabla-w-i0}, \eqref{esti-infty-out-12} and \eqref{ubar-out}, we obtain 
\begin{align*}
|\mathrm{I}_2|&\leq \Big|\int_{\ptl D_1\cap \mathcal{C}_{R_1}} \Big(\lam (\nabla \cdot (v_1^1-\bar{u}_1^1)) {\bf n} +\mu \big(\nabla (v_1^1-\bar{u}_1^1)+(\nabla (v_1^1-\bar{u}_1^1))^{T}\big){\bf n}\Big)\cdot \psi^2\Big|+C\\
&\leq \int_{|x_1|\leq R_1}\frac{C}{(\va+|x_1|^{1+\gamma})^{\frac{1}{1+\gamma}}} \ dx_1 +C\leq C|\ln \va|.
\end{align*}
Thus $|a_{11}^{12}|\leq C|\ln \va|$. Thus, \eqref{esti-a11-1221} is proved.

For \eqref{esti-a-ij-3b}, it follows from the same reason as in \eqref{equi-12=21} that $a_{ij}^{k 3}=a_{ji}^{3k}$ and $a_{12}^{33}=a_{21}^{33}$. Notice that
\begin{equation*}
a_{ij}^{k 3}=\int_{\Omega} \Big(\mathbb{C}^0 e(v_i^k), e(v_j^3)\Big) \ dx =\int_{\Omega}\Big( \mathbb{C}^0 \nabla v_i^k, \nabla v_j^3\Big) \ dx.
\end{equation*}
 We only prove the case that $i=j=k=1$, since the proof is the same for the other cases. By using \eqref{esti-infty-out-12}, \eqref{energy-nabla-w11} and \eqref{ineq-nabla-w13-bound}, we have
\begin{align*}
a_{11}^{13}&=\int_{\Omega_{R_1}}\Big(\mathbb{C}^0 \nabla v_1^1, \nabla v_1^3\Big) \ dx +O(1)\\
&=\int_{\Omega_{R_1}}\Big( \mathbb{C}^0 \nabla \bar{u}_1^1, \nabla \bar{u}_1^3\Big) \ dx + \int_{\Omega_{R_1}}\Big(\mathbb{C}^0 \nabla \bar{u}_1^1, \nabla w_1^3\Big) dx\\ &\quad+\int_{\Omega_{R_1}}\Big(\mathbb{C}^0\nabla\bar{u}_1^3, \nabla w_1^1 \Big) \ dx\
 +\int_{\Omega_{R_1}}\Big(\mathbb{C}^0 \nabla w_1^1, \nabla w_1^3\Big) \ dx +O(1)\\
& = \int_{\Omega_{R_1}} \Big(\mathbb{C}^0\nabla \bar{u}_1^1, \nabla \bar{u}_1^3\Big) dx + \int_{\Omega_{R_1}}\Big(\mathbb{C}^0 \nabla \bar{u}_1^1, \nabla w_1^3\Big) dx \\ &\quad +\int_{\Omega_{R_1}}\Big(\mathbb{C}^0\nabla\bar{u}_1^3, \nabla w_1^1 \Big) \ dx +O(1)=:\mathrm{I}_1+\mathrm{I}_2+\mathrm{I}_3+O(1).
\end{align*}
Since
\begin{align*}
\Big(\mathbb{C}^0\nabla \bar{u}_1^1, \nabla \bar{u}_1^3\Big) &=\begin{pmatrix}
~(\lam+2\mu)\ptl_1\bar{u} & \mu\ptl_2\bar{u}~\\
~~\\
\mu\ptl_2\bar{u} & \lam \ptl_1 \bar{u}
\end{pmatrix} : \begin{pmatrix}
~x_2\ptl_1\bar{u} & \bar{u}+x_2\ptl_2\bar{u}~\\
~~\\
~-\bar{u}-x_1\ptl_1\bar{u} & -x_1\ptl_2\bar{u}~
\end{pmatrix}\\
&=(\lam+2\mu)x_2(\ptl_1\bar{u})^2+\mu x_2 (\ptl_2\bar{u})^2-(\lam+\mu)x_1\ptl_1\bar{u}\ptl_2\bar{u},
\end{align*}
thus, by using \eqref{nablau_bar-interior}, one has
\begin{align*}
|\mathrm{I}_1|\leq C\Big(&\int_{\Omega_{R_1}} \frac{|x_2||x_1|^{2\gamma}}{(\va+|x_1|^{1+\gamma})^2} dx +\int_{\Omega_{R_1}}\frac{|x_2|}{(\va+|x_1|^{1+\gamma})^2} dx\\
&+\int_{\Omega_{R_1}} \frac{|x_1|^{1+\gamma}}{(\va+|x_1|^{1+\gamma})^2} dx \Big)
\leq C.
\end{align*}
Using \eqref{nabla-w-i3} and \eqref{nablau_bar-interior}, one has 
\begin{equation*}
|\mathrm{I}_2|\leq \Big|\int_{\Omega_{R_1}} \Big(\mathbb{C}^0 \nabla \bar{u}_1^1, \nabla w_1^3\Big) \ dx \Big|\leq C\int_{\Omega_{R_1}}|\nabla \bar{u}_1^1| \ dx \leq C.
\end{equation*}
Using \eqref{nabla-baru-3} and \eqref{nabla-w-i0}, 
\begin{align*}
|\mathrm{I}_3|\leq \Big| \int_{\Omega_{R_1}} \Big(\mathbb{C}^0 \nabla \bar{u}_1^3, \nabla w_1^1 \Big) \ dx \Big|\leq C \Big(\int_{\Omega_{R_1}} |\nabla \bar{u}_1^3|^2 dx \Big)^{\frac{1}{2}} \Big(\int_{\Omega_{R_1}} |\nabla w_1^1|^2 dx\Big)^{\frac{1}{2}}\leq C.
 \end{align*}
Therefore, we have
\begin{equation*}
|a_{11}^{13}|\leq C.
\end{equation*}

Similarly, for $a_{21}^{33}$, in view of \eqref{esti-infty-out-12}, we have
\begin{equation*}
a_{21}^{33}=\int_{\Omega}\Big(\mathbb{C}^0 e(v_2^3), e(v_1^3) \Big) dx=\int_{\Omega_{R_1}}\Big(\mathbb{C}^0 \nabla v_2^3, \nabla v_1^3 \Big) dx + O(1).
\end{equation*}
Then, by using \eqref{nabla-v-i3}, one has 
\begin{equation*}
\Big|\int_{\Omega_{R_1}}\Big(\mathbb{C}^0 \nabla v_2^3, \nabla v_1^3 \Big) dx\Big|\leq C\int_{\Omega_{R_1}} |\nabla v_2^3|\cdot |\nabla v_1^3| dx\leq C.
\end{equation*}
Thus, $|a_{21}^{33}|\leq C$  and \eqref{esti-a-ij-3b} is proved.

For \eqref{esti-bjk},  recalling from \eqref{def-u-b} and \eqref{def-b-j}, 
\begin{equation*}
\widetilde{b}_1^l=\int_{\ptl D_1} \frac{\ptl u_b}{\ptl \nu}\Big|_+\cdot \psi^l=\sum\limits_{k=1}^3 C_2^k \int_{\ptl D_1} \frac{\ptl(v_1^k+v_2^k)}{\ptl \nu}\Big|_+\cdot \psi^l + \int_{\ptl D_1} \frac{\ptl v_0}{\ptl \nu}\Big|_+ \cdot \psi^l.
\end{equation*}
Integrating by parts and using \eqref{v1+v2_bounded1}, \eqref{nabla-v-i3} and \eqref{nabla-v-i0}, one has
\begin{equation*}
\Big| \int_{\ptl D_1} \frac{\ptl v_0}{\ptl \nu}\Big|_+\cdot \psi^l\Big|= \Big| \int_{\Omega} \Big(\mathbb{C}^0 e(v_0), e(v_1^l) \Big) dx \Big| \leq C\|\nabla v_0\|_{L^\infty (\Omega)}\int_{\Omega}|\nabla v_1^l| dx\leq C.
\end{equation*}
Similarly, by using \eqref{v1+v2_bounded1}, we have  
\begin{equation*}
\Big|\int_{\ptl D_1} \frac{\ptl (v_1^k+v_2^k)}{\ptl \nu}\Big|_+ \cdot \psi^l \Big|\leq C.
\end{equation*}
Then, combining with \eqref{esti-C12}, we have \eqref{esti-bjk} for $j=1$ and the case of $j=2$ follows from the same reason. By the same proof of lemma 4.2 in \cite{lhg}, we obtain that $\mathcal{A}$  is positive definite. The proof of Lemma \ref{lemma-a11kk} is completed.
\end{proof}

\vspace{0.5cm}

\section{Lower bound of $|\nabla u|$ }\label{sub-lower-bounds}\label{sec-lower-bound} 

This section is devoted to the proof of Theorem 1.3. We follow the idea in \cite{lhg}, with some modifications provided. Recalling the definition of $u_b$ in \eqref{def-u-b}, it follows from \eqref{v1+v2_bounded1} and \eqref{esti-C12} that
\begin{equation}\label{ineq-bdd-ub}
\|\nabla u_b\|_{L^\infty(\Omega)}\leq C.
\end{equation}
From decomposition \eqref{decomposition_u2}, we see that
\begin{equation*}
|\nabla u(x)|\geq \Big|\sum\limits_{k=1}^3(C_{1}^k-C_{2}^k)\nabla{v}_{1}^k(x)\Big|-
|\nabla u_b(x)|, \quad x\in\Omega_{R_1}.
\end{equation*}
Then, to estimate the lower bound of $|\nabla u|$, in view of \eqref{nabla-v-i0}, \eqref{nabla-v-i3} and \eqref{esti-C12}, we only need to establish the lower bound of $|C_1^k-C_2^k|$, $k=1,\, 2, \,3$.

To this end, we need the following two lemmas. Let $v_i^{*l}$, $i=1, \,2$, $l=1,\,2,\,3$, satisfy, respectively,
\begin{equation}\label{equ-vil*}
\left\{
\begin{aligned}
\mathcal{L}_{\lam,\mu} v_i^{*l} &=0, ~~~&\mbox{in}&~\Omega^*,\\
v_i^{*l}&=\psi^l, ~~~&\mbox{on}&~\partial D_i^*\setminus\{0\},\\
v_i^{*l}&=0, ~~~ &\mbox{on}&~ \ptl D\cup\ptl D_j^*,~~j\neq i.
\end{aligned}\right.
\end{equation}
By using the approach from proposition 2.1 in \cite{lhg}, we have the following lemma about the gradient estimates on the boundary.
\begin{lemma}\label{lem-v-v}
	Let $v_i^l$ and $v_i^{*l}$ satisfy \eqref{equ_v1} and \eqref{equ-vil*}, respectively. Then, for sufficiently small $\va>0$,
	\begin{equation}\label{ineq-vil-v*}
	|\nabla (v_i^l-v_i^{*l})(x)|\leq C\va^{\frac{\gamma}{1+2\gamma}},  \quad \text{on} ~~\ptl D,\quad i, \,l=1,\, 2, 
	\end{equation}
	and
	\begin{equation}\label{ineq-vi3-v*}
	|\nabla (v_i^3-v_i^{*3})(x)|\leq C \va^{\frac{1+\gamma}{1+2\gamma}}, \quad \text{on} ~~\ptl D, \quad i=1, \,2,
	\end{equation}
	where $C$ is a universal constant.
\end{lemma}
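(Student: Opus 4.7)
The strategy is to compare $v_i^l$ and $v_i^{*l}$ on a domain away from the touching/near-touching region, exploiting the fact that $\Omega$ and $\Omega^*$ agree (up to a vertical translation of $\varepsilon/2$ on each inclusion) outside a neighborhood of the origin, and then to propagate the resulting smallness to $\partial D$ via elliptic regularity for the Lam\'e system. The key mechanism that dictates the two exponents is a scale-balance argument at an intermediate radius $r=r(\varepsilon)$.

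The plan is as follows. First I would choose an intermediate scale $r=\va^{1/(1+2\gamma)}$ and work in the annular region $\Omega\setminus\Omega_{r}$. On this region $v_i^l$ and $v_i^{*l}$ can both be regarded as solutions of the Lam\'e system with $v_i^l=v_i^{*l}=0$ on $\partial D$; their boundary values on the inner curve $\partial \Omega_{r}$ can be estimated using the gradient bounds of Proposition \ref{prop-interior} together with the $\varepsilon$-shift between $\partial D_i$ and $\partial D_i^*$. Indeed, translating $v_i^{*l}$ vertically by $P_i$ makes the translated function share boundary values on $\partial D_i$ with $v_i^l$ up to an error of size at most $\varepsilon\,\|\nabla v\|_{L^\infty(\{|x'|=r\})}$. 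Using
\[
|\nabla v_i^l(x)|,\ |\nabla v_i^{*l}(x)|\le \frac{C}{\va+|x_1|^{1+\gamma}}\quad(l=1,2),\qquad |\nabla v_i^{l}|\le \frac{C(\va+|x'|)}{\va+|x_1|^{1+\gamma}}\ (l=3),
\]
at $|x_1|=r=\va^{1/(1+2\gamma)}$ this gives, after a short computation,
\[
|v_i^l-v_i^{*l}|\le C\,\frac{\va}{r^{1+\gamma}}=C\va^{\gamma/(1+2\gamma)}\quad (l=1,2),\qquad |v_i^3-v_i^{*3}|\le C\,\frac{\va\,r}{r^{1+\gamma}}=C\va^{(1+\gamma)/(1+2\gamma)},
\]
the extra factor $r\sim|\psi^3|$ in the rotational case being what improves the exponent to $(1+\gamma)/(1+2\gamma)$.

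Second, on $\Omega\setminus\Omega_{r}$ the difference $\omega:=v_i^l-v_i^{*l}$ (after the above identification) solves the Lam\'e system with $\omega=0$ on $\partial D$ and with boundary data of the above size on $\partial\Omega_{r}\cap\overline{\Omega}$. Standard $L^\infty$ estimates for the Dirichlet problem for the Lam\'e system then give $\|\omega\|_{L^\infty(\Omega\setminus\Omega_{r})}\le C\va^{\alpha}$ with $\alpha=\gamma/(1+2\gamma)$ or $(1+\gamma)/(1+2\gamma)$ respectively. Finally, since $\partial D$ is $C^{1,\gamma}$, $\omega$ satisfies a homogeneous Lam\'e system near $\partial D$ with zero Dirichlet data on $\partial D$, so classical boundary Schauder (or the $C^{1,\gamma}$ estimate of Theorem \ref{lem-global-C1alp-estimates} with trivial right-hand side) yields
\[
\|\nabla\omega\|_{L^{\infty}(\partial D)}\le C\|\omega\|_{L^\infty(\Omega\setminus\Omega_{r})}\le C\va^{\alpha},
\]
which is exactly \eqref{ineq-vil-v*} or \eqref{ineq-vi3-v*}.

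The main obstacle is making Step~1 precise: $v_i^l$ and $v_i^{*l}$ live on different domains $\Omega$ and $\Omega^*$ that differ by the vertical shift $\pm\varepsilon/2$ on $\partial D_i$, so one cannot subtract them literally. This is handled by introducing a common domain via a smooth diffeomorphism (identity outside a neighborhood of the origin, and a vertical translation of magnitude $\varepsilon/2$ on each inclusion) and keeping careful track of the $O(\va)$ perturbation it introduces in the coefficients on a thin transition layer; this perturbation contributes only lower order to the boundary estimate on $\partial\Omega_{r}$ since the layer has thickness $O(\va)$ and the gradients there are at most $O(r^{-(1+\gamma)})$. Choosing $r=\va^{1/(1+2\gamma)}$ is what optimally balances the $\va/r^{1+\gamma}$ contribution coming from the translation against the $r$ penalty picked up when transferring the bound from $\partial\Omega_{r}$ to $\partial D$, and thus naturally produces the exponent $\gamma/(1+2\gamma)$.
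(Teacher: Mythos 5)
Your overall route is the same as the paper's: compare $v_i^l$ with the touching limit $v_i^{*l}$, push the comparison up to a mesoscale $r=\va^{1/(1+2\gamma)}$ using the pointwise gradient bounds of Proposition \ref{prop-interior}, and then propagate the $L^\infty$ smallness to $\ptl D$ with a maximum principle and boundary Schauder. Two points where your plan diverges from, or falls short of, the paper's argument deserve comment.

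First, the diffeomorphism is unnecessary and introduces complications the paper avoids entirely. Since both $v_i^l$ and $v_i^{*l}$ solve the same constant--coefficient Lam\'e system, the paper simply works on the \emph{intersection} domain $V\setminus\mathcal{C}_{\va^{1/(1+2\gamma)}}$ with $V=D\setminus\overline{D_1\cup D_2\cup D_1^*\cup D_2^*}$, where $v_i^l-v_i^{*l}$ literally solves $\mathcal{L}_{\lam,\mu}(v_i^l-v_i^{*l})=0$, and the only work is to estimate $|v_i^l-v_i^{*l}|$ on each piece of the boundary of this region (parts of $\ptl D_i$, $\ptl D_i^*$, the cylinder $|x_1|=\va^{1/(1+2\gamma)}$, and $\ptl D$). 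Your domain--deforming map would perturb the coefficients in a transition layer, and the $O(\va)$ perturbation meets gradients of order $r^{-(1+\gamma)}$ exactly where you need the boundary data to be small, so controlling this is not the ``lower order'' afterthought you suggest; the intersection-domain trick sidesteps it.

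Second, the balance that fixes $r=\va^{1/(1+2\gamma)}$ is not between $\va/r^{1+\gamma}$ and an ``$r$ penalty from transferring to $\ptl D$.'' Once you are on $V\setminus\mathcal{C}_r$ the transfer to $\ptl D$ costs nothing $r$-dependent (the maximum principle plus fixed-scale Schauder near $\ptl D$ give $\|\nabla\omega\|_{L^\infty(\ptl D)}\le C\|\omega\|_{L^\infty}$). The second contribution is instead $r^\gamma$, arising on the cylinder $|x_1|=r$: there you must bound $|v_1^1-v_1^{*1}|$ by integrating $\ptl_2(v_1^1-v_1^{*1})$ across the gap of height $\sim r^{1+\gamma}$, and the crucial cancellation $\ptl_2(v_1^1-v_1^{*1})=\ptl_2(v_1^1-\bar u_1^1)+\ptl_2(\bar u_1^1-\bar u_1^{*1})+\ptl_2(\bar u_1^{*1}-v_1^{*1})$ (so that the two dominant $1/\delta$ singularities cancel) is what produces the surviving $\sim 1/r$ piece and hence the $r^\gamma$ term. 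Without this decomposition, a naive bound $|\ptl_2 v_1^1|+|\ptl_2 v_1^{*1}|\sim r^{-(1+\gamma)}$ times the gap gives $O(1)$, not smallness. The balance $r^\gamma\sim\va/r^{1+\gamma}$ is what forces $r=\va^{1/(1+2\gamma)}$; your write-up arrives at the right $r$ and exponents, but the mechanism it cites would not by itself yield them.

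Your identification of the extra factor $|\psi^3|\sim r$ as the reason for the improved exponent $(1+\gamma)/(1+2\gamma)$ in \eqref{ineq-vi3-v*} is correct and matches the paper.
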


\begin{proof}
	Since the proof is same for the other cases, we only prove \eqref{ineq-vil-v*} for $i=1$ and $l=1$, following the idea used in Step 1.1 and Step 1.2 in proof of proposition 2.1 in \cite{lhg}.  We define the auxiliary functions, $\bar{u}^*$ and $\bar{u}_1^{*l}$, as limits of $\bar{u}$ and $\bar{u}_1^l$, where $\bar{u}^*\in C^{1,\, \gamma}(\R^2)$ satisfies $\bar{u}^*=1$ on $\ptl D_1^*\setminus \{0\}$, $\bar{u}^*=0$ on $\ptl D_2^*\cup \ptl D$ and 
	\begin{equation*}
	\bar{u}^*=\frac{x_2-h_2(x_1)}{h_1(x_1)-h_2(x_1)},\quad \text{in} ~~\Omega_{2R_1}^*,\quad \|\bar{u}^*\|_{C^{1,\,\gamma} (\Omega^*\setminus \Omega_{R_1}^*)}\leq C,
	\end{equation*}	
	and
	\begin{equation}\label{def-bar-u*}
	\bar{u}_1^{*l}:=\bar{u}^*\psi^l,\quad l=1,\, 2, \, 3.
	\end{equation}
	Here $\Omega_r^*:=\{(x_1, x_2)\in \R^2 : |x_1|< r,~ h_2(x_1)<x_2<h_1(x_1)\}$. By a direct calculation, we have
	\begin{equation}\label{nabla-bar-u*}
	|\ptl_1 \bar{u}^*(x)|\leq \frac{C}{|x_1|}, \quad \frac{1}{C|x_1|^{1+\gamma}}\leq \ptl_2 \bar{u}^*(x)\leq \frac{C}{|x_1|^{1+\gamma}}, \quad \text{for}~~x\in \Omega_{R_1}^*.
	\end{equation}
	Then, by using \eqref{nablau_bar-interior}, \eqref{nabla-bar-u*} and the definitions of $\bar{u}_1^l$ and $\bar{u}_1^{*l}$, we have for  $x\in \Omega_{R_1}^*$,
	\begin{equation}\label{bar-u-ptl2-12}
	|\ptl_2(\bar{u}_1^1-\bar{u}_1^{*1})|\leq |\ptl_2(\bar{u}-\bar{u}^*)|\leq \frac{C\va}{(\va+|x_1|^{1+\gamma})|x_1|^{1+\gamma}}.
	\end{equation} 
	Applying Proposition \ref{prop-interior} to   \eqref{equ-vil*} yields 
	\begin{equation}\label{nabla-v*-123}
	|\nabla (v_1^{*1}-\bar{u}_1^{*1})(x)|\leq \frac{C}{|x_1|}, \quad x\in \Omega_{R_1}^*,\quad
	\text{and}\quad
	|\nabla v_1^{*1}(x)|\leq C,\quad x\in \Omega^*\setminus\Omega_{R_1}^*.
	\end{equation}
	Moreover, we notice that $v_1^1-v_1^{*1}$ satisfies, respectively,
	\begin{equation}\label{equ-v11-v11*}
	\left\{
	\begin{aligned}
	\mathcal{L}_{\lam,\mu} (v_1^1-v_1^{*1}) &=0, \quad&\mbox{in}&~V:=D\setminus \overline{D_1\cup D_2 \cup D_1^*\cup D_2^*},\\
	v_1^1-v_1^{*1}&=\psi^1-v_1^{*1}, \quad&\mbox{on}&~\partial D_1\setminus D_1^*,\\
	v_1^1-v_1^{*1}&=-v_1^{*1}, \quad&\mbox{on}&~\partial D_2\setminus D_2^*,\\
	v_1^1-v_1^{*1}&=v_1^1-\psi^1, \quad&\mbox{on}&~\partial D_1^*\setminus( D_1\cup \{0\}),\\
	v_1^1-v_1^{*1}&=v_1^1, \quad&\mbox{on}&~\partial D_2^*\setminus D_2,\\
	v_1^1-v_1^{*1}&=0, \quad&\mbox{on}&~\partial D.
	\end{aligned}\right.
	\end{equation}
	For $x\in \big(\ptl D_1\setminus D_1^*\big)\cup \big(\ptl D_2\setminus D_2^*\big)\subset \Omega^*\setminus \Omega_{R_1}^*$, it follows from mean value theorem and \eqref{nabla-v*-123} that
	\begin{equation}\label{v1l-v*-ptl-D1}
	|(v_1^1-v_1^{*1})(x_1, x_2)|=|(\psi^1-v_1^{*1})(x_1, x_2)|=|v_1^{*1}(x_1, x_2-\frac{\va}{2})-v_1^{*1}(x_1, x_2)|\leq C\va.
	\end{equation}
	For $x\in \ptl D_1^*\setminus (D_1\cup \mathcal{C}_{\va^{\theta}})$, by virtue of mean value theorem and \eqref{nabla-v-i0}, we have
	\begin{align}\label{v1l-v*-ptl-D1*}
	|(v_1^1-v_1^{*1})(x_1, x_2)|&=|(v_1^1 -\psi^1)(x_1, x_2)|=|v_1^1(x_1, x_2)-v_1^1(x_1, x_2+\frac{\va}{2})|\notag \\
	&\leq \frac{C\va}{\va+|x_1|^{1+\gamma}}\leq C\va^{1-\theta(1+\gamma)},
	\end{align}
	where $0<\theta< 1/(1+\gamma)$ is some constant to be determined later. Similarly, for $x\in \ptl D_2^*\setminus (D_2\cup \mathcal{C}_{\va^\theta})$,  one has
	\begin{equation}\label{v1l-v*-ptl-D2*}
	|(v_1^1-v_1^{*1})(x)|\leq C\va^{1-\theta(1+\gamma)}. 
	\end{equation}
	For $x\in \Omega_{R_1}^*$ with $|x_1|=\va^\theta$, by using \eqref{nabla-w-i0}, \eqref{bar-u-ptl2-12} and  \eqref{nabla-v*-123}, we have
	\begin{align}\label{ptl-2-v1l-v*}
	|\ptl_2(v_1^1-v_1^{*1})(x)|
	&=|\ptl_2 (v_1^1-\bar{u}_1^1)(x)+\ptl_2(\bar{u}_1^1-\bar{u}_1^{*1})(x)+ \ptl_2 (\bar{u}_1^{*1}-v_1^{*1})(x)| \notag \\
	&\leq \frac{C}{(\va+|x_1|^{1+\gamma})^{1/(1+\gamma)}}+\frac{C\va}{(\va+|x_1|^{1+\gamma})|x_1|^{1+\gamma}}+\frac{C}{|x_1|} \notag \\
	&\leq \frac{C}{\va^\theta}+\frac{C}{\va^{2\theta(1+\gamma)-1}}.
	\end{align}
	Thus, for $x\in \Omega_{R_1}^*$ with $|x_1|=\va^\theta$, by virtue of mean value theorem, \eqref{v1l-v*-ptl-D2*} and \eqref{ptl-2-v1l-v*}, we have
	\begin{align}\label{v1l-v*-va-theta}
	|(v_1^1-v_1^{*1})(x_1, x_2)|
	&\leq |(v_1^1-v_1^{*1})(x_1, x_2)-(v_1^1-v_1^{*1})(x_1, h_2(x_1))|+ C\va^{1-\theta(1+\gamma)} \notag \\
	&\leq |\ptl_2(v_1^1-v_1^{*1})|\cdot \big(h_1(x_1)-h_2(x_1)\big)\Big|_{|x_1|=\va^\theta}+C\va^{1-\theta(1+\gamma)} \notag \\
	&\leq C\Big(\va^{\theta\gamma}+ \va^{1-\theta(1+\gamma)} \Big).
	\end{align}
	By taking $\theta \gamma=1-\theta(1+\gamma)$, one has $\theta=1/(1+2\gamma)$. Substituting it into \eqref{v1l-v*-ptl-D1*}, \eqref{v1l-v*-ptl-D2*} and \eqref{v1l-v*-va-theta}, together with \eqref{v1l-v*-ptl-D1} and $v_1^1-v_1^{*1}=0$ on $\ptl D$, we obtain
	\begin{equation*}
	|v_1^1-v_1^{*1}|\leq C\va^{\frac{\gamma}{1+2\gamma}},\quad \text{on}~~\ptl (V\setminus \mathcal{C}_{\va^{1/(1+2\gamma)}}).
	\end{equation*}
	Then, by virtue of the maximum principle for Lam\'{e} systems in $V\setminus \mathcal{C}_{\va^{1/(1+2\gamma)}}$, see \cite{mmn}, we have 
	\begin{equation}\label{ineq-v11-v11*}
	|(v_1^1-v_1^{*1})(x)|\leq C\va^{\frac{\gamma}{1+2\gamma}},\quad \text{in}~~V\setminus \mathcal{C}_{\va^{1/(1+2\gamma)}}.
	\end{equation}
It follows from the standard boundary gradient estimates for Lam\'{e} system (see \cite{adn1}) that 
	\begin{equation*}
	|\nabla (v_1^1-v_1^{*1})(x)|\leq C\va^{\frac{\gamma}{1+2\gamma}}, \quad \text{on} ~~\ptl D.
	\end{equation*}
The proof of Lemma \ref{lem-v-v} is completed.	
\end{proof}

\begin{lemma}\label{lem-ck-c*}
	Under the hypotheses of Theorem \ref{thm-lower-bound}, let $C_i^l$ and $C_*^l$ be defined in \eqref{decomposition-u} and \eqref{equ-limit}, respectively. If $\varphi$ satisfies \eqref{assup-varphi}, then
	\begin{equation}\label{equal-c13=c23}
	C_1^3=C_2^3,
	\end{equation} 
	and, for sufficiently small $\va>0$, 
	\begin{equation}\label{ineq-c1l+c2l-c*l}
	\Big|\frac{C_1^l+C_2^l}{2} -C_*^l\Big|\leq C\va^{\frac{\gamma}{1+2\gamma}}, \quad l=1, \, 2,\,  3,
	\end{equation}
	 where $C$ is a universal constant. Consequently, 
	\begin{equation}\label{ineq-c2-c*}
	|C_i^l-C_{*}^l|\leq \frac{1}{2}|C_1^l-C_2^l| + \Big|\frac{C_1^l+C_2^l}{2}- C_*^l\Big| \leq C\va^{\frac{\gamma}{1+2\gamma}}, \quad i=1, \,2, ~~ l=1,\, 2, \,3.
	\end{equation}
	\end{lemma}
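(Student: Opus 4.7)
The plan is to establish \eqref{equal-c13=c23} from a reflection-symmetry/uniqueness argument, then prove \eqref{ineq-c1l+c2l-c*l} by comparing the $3\times 3$ linear systems satisfied by the averages $(C_1^l+C_2^l)/2$ and by the limit coefficients $C_*^l$ after pushing every boundary integral onto $\partial D$; the consequence \eqref{ineq-c2-c*} then falls out of the triangle inequality combined with \eqref{esti-c1-c2}.

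For \eqref{equal-c13=c23}, under \eqref{assump-symm} the reflection $\sigma(x_1,x_2)=(x_1,-x_2)$ interchanges $D_1$ with $D_2$ and preserves $D$. Setting $\tilde u(x_1,x_2):=(u^{(1)}(x_1,-x_2),-u^{(2)}(x_1,-x_2))$, a direct check confirms that $\tilde u$ solves the Lam\'{e} system with $e(\tilde u)=0$ in $D_1\cup D_2$ and that the traction conditions of \eqref{lame-system-infty} are preserved, while \eqref{assup-varphi} gives $\tilde u|_{\partial D}=-\varphi$. Uniqueness of the solution of \eqref{lame-system-infty} then forces $\tilde u=-u$, so $u^{(1)}$ is odd and $u^{(2)}$ is even in $x_2$. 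Writing $u|_{D_i}=C_i^1 e_1+C_i^2 e_2+C_i^3(-x_2,x_1)^T$ and matching coefficients across the two inclusions yields $C_1^3=C_2^3$ (together with $C_1^1=-C_2^1$ and $C_1^2=C_2^2$). The same argument applied to $u^*$ forces $C_*^1=0$, so the $l=1$ case of \eqref{ineq-c1l+c2l-c*l} is immediate.

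For $l=2,3$, introduce $V_\pm^k:=v_1^k\pm v_2^k$ on $\Omega$ and $\tilde v_k:=v_1^{*k}+v_2^{*k}$ on $\Omega^*$. Summing \eqref{equ-decompositon} over $j=1,2$, splitting $C_i^k=(C_1^k+C_2^k)/2+(-1)^{i+1}(C_1^k-C_2^k)/2$, and exploiting $\mathcal{L}V_\pm^k=\mathcal{L}v_0=0$ together with $e(\psi^l)=0$ to transport each inner-boundary traction integral onto $\partial D$ via the divergence theorem, one obtains
\begin{equation*}
\sum_{k=1}^3\frac{C_1^k+C_2^k}{2}\int_{\partial D}\frac{\partial V_+^k}{\partial\nu}\cdot\psi^l=-\sum_{k=1}^3\frac{C_1^k-C_2^k}{2}\int_{\partial D}\frac{\partial V_-^k}{\partial\nu}\cdot\psi^l-\int_{\partial D}\frac{\partial v_0}{\partial\nu}\cdot\psi^l.
\end{equation*}
The parallel identity for $u^*$ reads $\sum_k C_*^k\int_{\partial D}\partial_\nu\tilde v_k\cdot\psi^l=-\int_{\partial D}\partial_\nu v_0^*\cdot\psi^l$. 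Subtracting and regrouping produces the $3\times 3$ system
\begin{equation*}
\sum_{k=1}^3\Big(\tfrac{C_1^k+C_2^k}{2}-C_*^k\Big)M_{lk}=R_l,\qquad M_{lk}:=\int_{\partial D}\frac{\partial V_+^k}{\partial\nu}\cdot\psi^l,
\end{equation*}
and a second integration by parts identifies $M_{lk}=-\int_{\Omega}(\mathbb{C}^0 e(V_+^k),e(V_+^l))\,dx$, which tends as $\va\to 0$ to the negative-definite matrix $M_*:=-\int_{\Omega^*}(\mathbb{C}^0 e(\tilde v_k),e(\tilde v_l))\,dx$ (by linear independence of $\{\tilde v_k\}$), so $M$ is invertible with $\va$-independent bound on $M^{-1}$. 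The remainder $R_l$ consists of three pieces: (a) $\sum_k\tfrac{C_1^k-C_2^k}{2}\int_{\partial D}\partial_\nu V_-^k\cdot\psi^l$, of size $O(\va^{\gamma/(1+\gamma)})$ by Proposition~\ref{lem-rest-terms}(ii); (b) $\sum_k C_*^k\int_{\partial D}\partial_\nu(V_+^k-\tilde v_k)\cdot\psi^l$, of size $O(\va^{\gamma/(1+2\gamma)})$ by Lemma~\ref{lem-v-v}; and (c) $\int_{\partial D}\partial_\nu(v_0-v_0^*)\cdot\psi^l$, controlled by a simpler variant of the cascading argument of Lemma~\ref{lem-v-v} (using that $v_0,v_0^*$ have uniformly bounded gradients, so their difference is $O(\va)$ on the inner surfaces). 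Hence $|R_l|\leq C\va^{\gamma/(1+2\gamma)}$ and Cramer's rule yields \eqref{ineq-c1l+c2l-c*l}. The consequence \eqref{ineq-c2-c*} then follows from the decomposition $C_i^l-C_*^l=(-1)^{i+1}\tfrac12(C_1^l-C_2^l)+(\tfrac{C_1^l+C_2^l}{2}-C_*^l)$ combined with \eqref{esti-c1-c2} for $l=1,2$ and $C_1^3=C_2^3$ for $l=3$. The principal technical point is the gradient bound for $v_0-v_0^*$ on $\partial D$, since Lemma~\ref{lem-v-v} treats only the $v_i^l$; however the argument is easier here because $v_0,v_0^*$ carry no near-origin singularity.
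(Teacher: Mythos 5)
Your argument is structurally parallel to the paper's but makes two genuinely different choices, one of which is a real improvement and one of which leaves a small gap worth noting.

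For \eqref{equal-c13=c23}, you apply the $x_1$-axis reflection $\sigma(x_1,x_2)=(x_1,-x_2)$ directly to the full solution $u$ and invoke uniqueness of \eqref{lame-system-infty}; the paper instead reflects the building blocks $v_i^l$ and $v_0$, derives the algebraic relations \eqref{symm-axis-1}--\eqref{symm-origin-2} and \eqref{rel-b123} for the $a_{ij}^{kl}$ and $b_j^l$, and then pushes $C_1^3=C_2^3$ through an explicit Cramer's-rule determinant computation on the $6\times6$ system. Your route is shorter, and it additionally yields $C_1^1=-C_2^1$, $C_1^2=C_2^2$ and $C_*^1=0$ (which the paper's relations also imply, e.g. by subtracting the two $l=2$ equations of \eqref{equ-6*6} using \eqref{equal-aij=0} and \eqref{symm-origin-1}, but never states). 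This is a genuine simplification, and it is worth being explicit that one must also check that $\sigma$ preserves the free-traction condition in line four of \eqref{lame-system-infty} — which it does, by orthogonal invariance of $\mathbb{C}^0$.

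For \eqref{ineq-c1l+c2l-c*l}, both you and the paper subtract the summed identity for $u$ from the one for $u^*$ and land on a $3\times3$ system with matrix $M_{lk}=-\int_\Omega(\mathbb{C}^0 e(V_+^k),e(V_+^l))\,dx$, i.e. minus the paper's $a^{kl}$. The paper then uses the symmetry relations \eqref{equal-aij=0} to observe that this matrix is \emph{diagonal}, so the system \eqref{equ-matrix-3} decouples and only lower bounds on $a^{kk}$ are needed. You instead argue generic negative-definiteness with an $\varepsilon$-uniform bound on $M^{-1}$ via convergence to the limiting Gram matrix. Both work; the paper's diagonalization is cleaner given the hypotheses, but your version is more robust and would survive without the full symmetry package.

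The one place you are hand-waving in a way that matters is the remainder piece (c), $\int_{\partial D}\partial_\nu(v_0-v_0^*)\cdot\psi^l$. You propose estimating $v_0-v_0^*$ directly by a maximum-principle cascade, which requires $\|\nabla v_0^*\|_{L^\infty(\Omega^*)}\leq C$ on a domain with \emph{touching} inclusions — plausible (as a limit of \eqref{v1+v2_bounded1}, since $v_0^*$ carries the same Dirichlet value on both inclusions), but not established anywhere in the paper, so you would owe a proof or a citation. The paper avoids this entirely by a reciprocity (Green's identity) step: it rewrites $b_j^k-b_{*j}^k=-\int_{\partial D}\partial_\nu(v_j^k-v_j^{*k})\cdot\varphi$, which is precisely the quantity controlled by Lemma~\ref{lem-v-v}, and notice that your (c) equals $\sum_j(b_j^l-b_{*j}^l)$ after a divergence-theorem step. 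So you should either adopt that identity and let Lemma~\ref{lem-v-v} do the work, or supply the missing $\nabla v_0^*$ bound. With that repair the proof is sound; \eqref{ineq-c2-c*} then follows from the triangle inequality together with \eqref{esti-c1-c2} and \eqref{equal-c13=c23} exactly as you say.
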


\begin{proof}
	Recalling decomposition \eqref{decomposition-u} and the forth line of \eqref{lame-system-infty}, we have 
	\begin{equation}\label{equ-6*6}
	\left\{\begin{aligned}
	&\sum_{k=1}^3 C_1^k \, a_{11}^{kl} +\sum_{k=1}^3 C_2^k\, a_{21}^{kl}- b_1^l=0,\\
	&\sum_{k=1}^3 C_1^k \,a_{12}^{kl} +\sum_{k=1}^3C_2^k \, a_{22}^{kl}- b_2^l=0,
	\end{aligned}\qquad l =1, 
	\, 2,\,  3,\right.
	\end{equation}
	where $a_{ij}^{kl}$ is defined by \eqref{def-aij-bt}, and 
	\begin{equation*}
	b_j^l:=b_j^l[\varphi]:=\int_{\ptl D_j} \frac{\ptl v_0}{\ptl\nu}\Big|_+\cdot \psi^l.
	\end{equation*}

	Now, we use the symmetric conditions \eqref{assump-symm} and \eqref{assup-varphi}, to estimate $a_{11}^{kl}-a_{22}^{kl}$, $a_{12}^{kl}-a_{21}^{kl}$ and $C_1^3-C_2^3$. Since $\Omega$ is symmetric with respect to $x_1$-axis, it follows that
	\begin{equation}\label{rel-v1-2}
	\left\{\begin{aligned}
	(v_2^1)^{(1)}(x_1, x_2)&= (v_1^1)^{(1)}(x_1, -x_2),\\
	(v_2^1)^{(2)}(x_1, x_2)&= -(v_1^1)^{(2)}(x_1, -x_2),
	\end{aligned}\right.\quad \text{for}~~ x\in \Omega,
	\end{equation}
	and
	\begin{equation}\label{rel-v2-3}
	\left\{\begin{aligned}
	(v_2^k)^{(1)}(x_1, x_2)&= -(v_1^k)^{(1)}(x_1, -x_2),\\
	(v_2^k)^{(2)}(x_1, x_2)&= (v_1^k)^{(2)}(x_1, -x_2),
	\end{aligned}\right. \quad \text{for}~~ x\in \Omega,\quad k =2, \, 3.
	\end{equation}
	By virtue of \eqref{rel-v1-2} and \eqref{rel-v2-3},  a direct calculation leads to 
	\begin{equation*}
	\begin{aligned}
	\ptl_1(v_2^1)^{(1)}(x_1, x_2)&=\ptl_1 (v_1^1)^{(1)}(x_1, -x_2),  \\
	\ptl_1(v_2^1)^{(2)}(x_1, x_2)&= -\ptl_1(v_1^1)^{(2)}(x_1, -x_2), 
	\end{aligned}\quad
	\begin{aligned}
	\ptl_2(v_2^1)^{(1)}(x_1, x_2)&=-\ptl_2 (v_1^1)^{(1)}(x_1, -x_2),\\
\ptl_2(v_2^1)^{(2)}(x_1, x_2)&= \ptl_2(v_1^1)^{(2)}(x_1, -x_2),
	\end{aligned}
	\end{equation*}
and for $k=2,\, 3$,
	\begin{equation*}
\begin{aligned}
\ptl_1(v_2^k)^{(1)}(x_1, x_2)&=-\ptl_1 (v_1^k)^{(1)}(x_1, -x_2),  \\
\ptl_1(v_2^k)^{(2)}(x_1, x_2)&= \ptl_1(v_1^k)^{(2)}(x_1, -x_2), 
\end{aligned}\quad
\begin{aligned}
\ptl_2(v_2^k)^{(1)}(x_1, x_2)&=\ptl_2 (v_1^k)^{(1)}(x_1, -x_2),\\
\ptl_2(v_2^k)^{(2)}(x_1, x_2)&= -\ptl_2(v_1^k)^{(2)}(x_1, -x_2).
\end{aligned}
\end{equation*}
Thus
	\begin{equation*}
	\begin{aligned}
	\Big(\mathbb{C}^0e(v_2^1), e(v_2^2)\Big)&=\lam\Big( \ptl_1 (v_2^1)^{(1)}+ \ptl_2(v_2^1)^{(2)}\Big)\cdot \Big( \ptl_1(v_2^2)^{(1)} +\ptl_2(v_2^2)^{(2)}   \Big)\\
	&\qquad+2\mu \Big(\ptl_1 (v_2^1)^{(1)}\cdot\ptl_1(v_2^2)^{(1)}  + \ptl_2(v_2^1)^{(2)}\cdot\ptl_2(v_2^2)^{(2)}  \Big)\\
	&\qquad +\mu \Big( \ptl_1(v_2^1)^{(2)}+\ptl_2(v_2^1)^{(1)}\Big) \cdot \Big(  \ptl_1(v_2^2)^{(2)} +\ptl_2(v_2^2)^{(1)} \Big) \\
	&= \lam\Big( \ptl_1 (v_1^1)^{(1)}+ \ptl_2(v_1^1)^{(2)}\Big)\cdot \Big(- \ptl_1(v_1^2)^{(1)} -\ptl_2(v_1^2)^{(2)}   \Big)\\
	&\qquad+2\mu \Big(-\ptl_1 (v_1^1)^{(1)}\cdot\ptl_1(v_1^2)^{(1)}  - \ptl_2(v_1^1)^{(2)}\cdot\ptl_2(v_1^2)^{(2)}  \Big)\\
	&\qquad +\mu \Big( -\ptl_1(v_1^1)^{(2)}-\ptl_2(v_1^1)^{(1)}\Big) \cdot \Big(  \ptl_1(v_1^2)^{(2)} +\ptl_2(v_1^2)^{(1)} \Big) \\
&=-\Big(\mathbb{C}^0e( v_1^1), e( v_1^2)\Big).
	\end{aligned}
	\end{equation*}
	Therefore 
	\begin{equation}\label{symm-axis-1}
	a_{22}^{12}=-a_{11}^{12}.
	\end{equation}
	By the same way, 
	\begin{equation*}
	\begin{aligned}
	\Big(\mathbb{C}^0e(v_2^2), e(v_2^3)\Big)&=\lam\Big( \ptl_1 (v_2^2)^{(1)}+ \ptl_2(v_2^2)^{(2)}\Big)\cdot \Big( \ptl_1(v_2^3)^{(1)} +\ptl_2(v_2^3)^{(2)}   \Big)\\
	&\qquad+2\mu \Big(\ptl_1 (v_2^2)^{(1)}\cdot\ptl_1(v_2^3)^{(1)}  + \ptl_2(v_2^2)^{(2)}\cdot\ptl_2(v_2^3)^{(2)}  \Big)\\
	&\qquad +\mu \Big( \ptl_1(v_2^2)^{(2)}+\ptl_2(v_2^2)^{(1)}\Big) \cdot \Big(  \ptl_1(v_2^3)^{(2)} +\ptl_2(v_2^3)^{(1)} \Big) \\
	&= \lam\Big( -\ptl_1 (v_1^2)^{(1)}- \ptl_2(v_1^2)^{(2)}\Big)\cdot \Big(- \ptl_1(v_1^3)^{(1)} -\ptl_2(v_1^3)^{(2)} \Big)\\
	&\qquad+2\mu \Big(\ptl_1 (v_1^2)^{(1)}\cdot\ptl_1(v_1^3)^{(1)}  + \ptl_2(v_1^2)^{(2)}\cdot\ptl_2(v_1^3)^{(2)}  \Big)\\
	&\qquad +\mu \Big( -\ptl_1(v_1^2)^{(2)}-\ptl_2(v_1^2)^{(1)}\Big) \cdot \Big( -\ptl_1(v_1^3)^{(2)} -\ptl_2(v_1^3)^{(1)} \Big) \\
	&=\Big(\mathbb{C}^0e( v_1^2), e(v_1^3)\Big).
	\end{aligned}
	\end{equation*}
		Hence,
	\begin{equation}\label{symm-axis-2}
	a_{22}^{23}=a_{11}^{23}.
	\end{equation}
	Similarly, we have
	\begin{equation}\label{symm-axis-rest}
	\begin{aligned}
	&a_{22}^{12}=-a_{11}^{12},~~ a_{12}^{12}=-a_{21}^{12},~~ a_{12}^{21}=-a_{21}^{21},\\
	&a_{12}^{13}=-a_{21}^{13}, ~~a_{12}^{31}=-a_{21}^{31},~~a_{12}^{23}=a_{21}^{23},~~a_{12}^{32}=a_{21}^{32}.
	\end{aligned}
	\end{equation}
	
	On the other hand, \eqref{assump-symm} implies that $\Omega$ is symmetric about the origin. Then, for $x\in \Omega$,
	\begin{equation*}
	v_2^k(x_1, x_2)= v_1^k(-x_1, -x_2), \quad k =1, \,2, \quad \text{and} \quad v_2^3(x_1, x_2)=-v_1^3(-x_1, -x_2). 
	\end{equation*}
	By a direct calculation, 
	\begin{equation}\label{symm-origin-1}
	a_{11}^{kl}=a_{22}^{kl}, ~~ a_{12}^{kl}=a_{21}^{kl}, ~~ a_{11}^{33}= a_{22}^{33},  \quad k, l=1, 2,
	\end{equation}
	and
	\begin{equation}\label{symm-origin-2}
	a_{11}^{k3}=-a_{22}^{k3}, ~~ a_{11}^{3k}=-a_{22}^{3k},  ~~ a_{12}^{k3}= -a_{21}^{k3}, ~~a_{12}^{3k} =- a_{21}^{3k}, \quad k =1,\, 2.
	\end{equation}
	Combining with \eqref{symm-axis-1}-\eqref{symm-origin-2}, we obtain that	
	\begin{equation}\label{equal-aij=0}
	a_{ij}^{12}=a_{ij}^{21}=a_{ij}^{23}=a_{ij}^{32}=0, \quad i,\, j =1, \, 2.
	\end{equation}
	To see clearly, we put them into the matrix:
	\begin{equation}\label{matrix-66}
	\begin{aligned}
		\widetilde{\mathcal{A}} :=\begin{pmatrix}
	~ a_{11}^{11} & a_{11}^{21} & a_{11}^{31}  & a_{21}^{11} & a_{21}^{21} & a_{21}^{31}~\\
	~~\\
	~ a_{11}^{12} & a_{11}^{22} & a_{11}^{32}  & a_{21}^{12} & a_{21}^{22} & a_{21}^{32} ~\\
	~~\\
	a_{11}^{13} & a_{11}^{23} & a_{11}^{33}  & a_{21}^{13} & a_{21}^{23} & a_{21}^{33}~\\
	~~\\
	~ a_{12}^{11} & a_{12}^{21} & a_{12}^{31}  & a_{22}^{11} & a_{22}^{21} & a_{22}^{31}~\\
	~~\\
	~ a_{12}^{12} & a_{12}^{22} & a_{12}^{32}  & a_{22}^{12} & a_{22}^{22} & a_{22}^{32} ~\\
	~~\\
	a_{12}^{13} & a_{12}^{23} & a_{12}^{33}  & a_{22}^{13} & a_{22}^{23} & a_{22}^{33}~
	\end{pmatrix}=\begin{pmatrix}
	~ a_{11}^{11} & 0 & a_{11}^{13}  & a_{12}^{11} & 0& a_{12}^{13}~\\
	~~\\
	~ 0 & a_{11}^{22} & 0 & 0 & a_{12}^{22} & 0~\\
	~~\\
	a_{11}^{13} & 0 & a_{11}^{33}  & -a_{12}^{13} & 0 & a_{12}^{33}~\\
	~~\\
	~ a_{12}^{11} & 0 & -a_{12}^{13}  & a_{11}^{11} & 0 & -a_{11}^{13} ~\\
	~~\\
	~ 0 & a_{12}^{22} & 0  & 0 & a_{11}^{22} & 0~\\
	~~\\
	a_{12}^{13} & 0 & a_{12}^{33}  & -a_{11}^{13} & 0 & a_{11}^{33}~
	\end{pmatrix}.
	\end{aligned}
	\end{equation}
	
	To prove \eqref{equal-c13=c23}, using  \eqref{assup-varphi} and the symmetry of domains with respect to $x_2$-axis, we have
	\begin{equation*}
	\left\{ \begin{aligned}
	& v_0^{(1)}(x_1, x_2)=-v_0^{(1)}(x_1, -x_2),\\
	& v_0^{(2)}(x_1, x_2)=v_0^{(2)}(x_1, -x_2), 
	\end{aligned}\right.\quad \text{for}~~ x\in \Omega.
	\end{equation*}
	Then, a direct calculation leads to
	\begin{equation}\label{rel-b123}
	b_2^1=-b_1^1,\quad b_2^2=b_1^2,\quad b_2^3=b_1^3.
	\end{equation} 
	We rewrite \eqref{equ-6*6} into matrix equation:
	\begin{equation*}
	\widetilde{\mathcal{A}} \widetilde{X}= B,
	\end{equation*}
	where $\widetilde{X}= (C_1^1, \, C_1^2, \, C_1^3,\, C_2^1, \, C_2^2, \, C_2^3)^{\text{T}}$ and $B=(b_1^1, \, b_1^2, \, b_1^3,\, b_2^1, \, b_2^2, \, b_2^3 )^{\text{T}}$. By using the Cramer's rule and a direct calculation, we have
	\begin{equation*}
	C_1^3=\frac{( a_{11}^{22})^2-(a_{12}^{22})^2 }{\det \widetilde{\mathcal{A}} }\cdot \det{\mathcal{A}_3'},
	\end{equation*}
where 
\begin{equation*}
\begin{aligned}
\det{\mathcal{A}_3'}=
\begin{vmatrix}
~a_{11}^{11} & b_1^1 &  a_{12}^{11}  &  a_{12}^{13}~\\
~~\\
~a_{11}^{13} & b_1^3 &  -a_{12}^{13}  & a_{12}^{33}~\\
~~\\
~a_{12}^{11} & b_2^1 &  a_{11}^{11}  & -a_{11}^{13}~\\
~~\\
~a_{12}^{13} & b_2^3 &  -a_{11}^{13}  & a_{11}^{33}~
\end{vmatrix}.
\end{aligned}
\end{equation*}
 Adding the third row by the first row, subtracting the fourth row by the second row and using \eqref{rel-b123},  we have 
\begin{equation*}
\begin{aligned}
\det{\mathcal{A}_3'}=
\begin{vmatrix}
~a_{11}^{11} & b_1^1 &  a_{12}^{11}  &  a_{12}^{13}~\\
~~\\
~a_{11}^{13} & b_1^3 &  -a_{12}^{13}  & a_{12}^{33}~\\
~~\\
~a_{11}^{11}+a_{12}^{11} & 0 &  a_{11}^{11} +a_{12}^{11} &  a_{12}^{13}-a_{11}^{13}~\\
~~\\
~a_{12}^{13}-a_{11}^{13} & 0 &  a_{12}^{13}-a_{11}^{13}  & a_{11}^{33}-a_{12}^{33}~
\end{vmatrix}.
\end{aligned}
\end{equation*}
Subtracting the first column by the third column, we obtain
\begin{equation*}
\begin{aligned}
\det{\mathcal{A}_3'}&=
\begin{vmatrix}~~\\
~a_{11}^{11}-a_{12}^{11}  & b_1^1 &  a_{12}^{11}  &  a_{12}^{13}~\\
~~\\
~a_{11}^{13}+a_{12}^{13}  & b_1^3 &  -a_{12}^{13}  & a_{12}^{33}~\\
~~\\
~0 & 0 &  a_{11}^{11} +a_{12}^{11} &  a_{12}^{13}-a_{11}^{13}~\\
~~\\
~0 & 0 &  a_{12}^{13}-a_{11}^{13}  & a_{11}^{33}-a_{12}^{33}~\\
~~
\end{vmatrix}\\
&=
\begin{vmatrix}
~a_{11}^{11}-a_{12}^{11} & b_1^1~\\
~~\\
~a_{11}^{13}+a_{12}^{13} & b_1^3~ 
\end{vmatrix} \begin{vmatrix}
~a_{11}^{11} +a_{12}^{11} &  a_{12}^{13}-a_{11}^{13}~~\\
~~\\
~ a_{12}^{13}-a_{11}^{13}  & a_{11}^{33}-a_{12}^{33}~
\end{vmatrix}.
\end{aligned}
\end{equation*}
Therefore, we have
\begin{equation*}
	C_1^3=\frac{( a_{11}^{22})^2-(a_{12}^{22})^2 }{\det \widetilde{\mathcal{A}} }\cdot \begin{vmatrix}
	~a_{11}^{11}-a_{12}^{11} & b_1^1~\\
	~~\\
	~a_{11}^{13}+a_{12}^{13} & b_1^3~ 
	\end{vmatrix} \begin{vmatrix}
	~a_{11}^{11} +a_{12}^{11} &  a_{12}^{13}-a_{11}^{13}~~\\
	~~\\
	~ a_{12}^{13}-a_{11}^{13}  & a_{11}^{33}-a_{12}^{33}~
	\end{vmatrix}.
\end{equation*}
By the same way, we also have 
\begin{equation*}
\begin{aligned}
	C_2^3&=\frac{( a_{11}^{22})^2-(a_{12}^{22})^2 }{\det \widetilde{\mathcal{A}} } \cdot\begin{vmatrix}
	~  a_{11}^{11}-a_{12}^{11}   &   -b_2^1~\\
	~~\\
	~   a_{11}^{13} +a_{12}^{13}  & b_2^3~
	\end{vmatrix}\begin{vmatrix}
~a_{11}^{11}+a_{12}^{11} & a_{12}^{13}-a_{11}^{13} ~\\
~~\\
~a_{12}^{13}-a_{11}^{13}  & a_{11}^{33}-a_{12}^{33}  ~
\end{vmatrix}.
\end{aligned}
\end{equation*}
Using \eqref{rel-b123} again, we see that
\begin{equation*}
C_1^3=C_2^3.
\end{equation*}

In order to prove \eqref{ineq-c1l+c2l-c*l}, we define
\begin{equation*}
v^l:=v_1^l+v_2^l,\quad v^{*l}:=v_1^{*l}+v_2^{*l},\quad l=1,\,2, \,3,
\end{equation*}
then $v^l$, $v^{*l}$
satisfy, respectively,
\begin{equation}\label{equ-v1+v2+limit}
\left\{
\begin{aligned}
\mathcal{L}_{\lam,\mu} v^{l} &=0, ~~~&\mbox{in}&~\Omega,\\
v^{l}&=\psi^l, ~~~&\mbox{on}&~\partial D_1\cup \ptl D_2,\\
v^{l}&=0,~~~&\mbox{on}&~ \ptl D,
\end{aligned}\right.\quad\left\{
\begin{aligned}
\mathcal{L}_{\lam,\mu} v^{*l} &=0, ~~~&\mbox{in}&~\Omega^*,\\
v^{*l}&=\psi^l, ~~~&\mbox{on}&~\partial D_1^*\cup \ptl D_2^*,\\
v^{*l}&=0,~~~&\mbox{on}&~ \ptl D.
\end{aligned}\right.
\end{equation}
Recalling \eqref{equ-limit}, we decompose  $u^*$ as follows:
\begin{equation}\label{def-ub*}
u^*=\sum_{k=1}^3 C_{*}^k v^{*k}+v_0^{*},
\end{equation}
where  $v_0^*$ satisfies
\begin{equation*}
\left\{
\begin{aligned}
\mathcal{L}_{\lam,\mu} v_0^* &=0, ~~~&\mbox{in}&~\Omega^*,\\
v_0^*&=0, ~~~&\mbox{on}&~\partial D_1^*\cup \ptl D_2^*,\\
v_0^*&=\varphi, ~~~ &\mbox{on}&~ \ptl D.
\end{aligned}\right.
\end{equation*}
 Then, it follows from the third line of \eqref{equ-limit} that
	\begin{equation}\label{equ-limit-all}
	\sum_{k=1}^3 C_*^k a_*^{kl} -(b_{*1}^l+b_{*2}^l)=0, \quad l=1, \,2, \, 3,
	\end{equation}
	where
	\begin{equation*}
	a_*^{kl}:=-\Big(\int_{\ptl D_1^*}\frac{\ptl v^{*k}}{\ptl \nu}\Big|_+\cdot\psi^l+ \int_{\ptl D_2^*}\frac{\ptl v^{*k}}{\ptl \nu}\Big|_+\cdot\psi^l\Big),
	\end{equation*}
	and
	\begin{equation*}
	b_{*j}^l:=b_{*j}^l[\varphi]:=\int_{\ptl D_j^*}\frac{\ptl v_0^*}{\ptl \nu}\Big|_+\cdot \psi^l,\quad j =1, \, 2.
	\end{equation*}
	We rewrite the first group of equations in \eqref{equ-6*6} into the following two equations
	\begin{equation*}
	\sum_{k=1}^3 (C_1^k +C_2^k)\, a_{11}^{kl} +\sum_{k=1}^3 C_2^k\, (a_{21}^{kl}-a_{11}^{kl})= b_1^l,
	\end{equation*}
	and 
	\begin{equation*}
	\sum_{k=1}^3 (C_1^k +C_2^k)\, a_{21}^{kl} -\sum_{k=1}^3 C_1^k\, (a_{21}^{kl}-a_{11}^{kl})= b_1^l.
	\end{equation*}
	Adding these two equations, we have
	\begin{equation}\label{equ-11}
	\sum_{k=1}^3 (C_1^k +C_2^k)\, (a_{11}^{kl}+a_{21}^{kl}) +\sum_{k=1}^3 (C_1^k-C_2^k)\, (a_{11}^{kl}-a_{21}^{kl})= 2b_1^l.
	\end{equation}
	Similarly, for the second group of equations in \eqref{equ-6*6}, we have
	\begin{equation}\label{equ-22}
	\sum_{k=1}^3 (C_1^k +C_2^k)\, (a_{12}^{kl}+a_{22}^{kl}) +\sum_{k=1}^3 (C_1^k-C_2^k)\, (a_{12}^{kl}-a_{22}^{kl})= 2b_2^l.
	\end{equation}
	Combining \eqref{equ-11} and \eqref{equ-22},  one has
	\begin{equation}\label{equ-all}
	\sum_{k=1}^3\frac{C_1^k+C_2^k}{2} a^{kl} +\sum_{k=1}^3\frac{C_1^k-C_2^k}{2}(a_{11}^{kl}-a_{22}^{kl}+a_{12}^{kl}-a_{21}^{kl})=b_1^l+b_2^l,
	\end{equation}
	where $a^{kl}:=\sum\limits_{i,\,j=1}^2 a_{ij}^{kl}$. We rewrite \eqref{equ-limit-all} as the follows
	\begin{equation}\label{equ-limit-varia} 
	\sum_{k=1}^3 C_*^k a^{kl}++\sum_{k=1}^3 C_*^k (a_*^{kl}-a^{kl}) =b_{*1}^l+b_{*2}^l, \quad l=1, \,2, \, 3.
	\end{equation}
	 Noticing from \eqref{equal-aij=0} and \eqref{matrix-66}, we see that
	 \begin{equation*}
	 a^{12}=a^{21}=a^{13}=a^{31}=a^{23}=a^{32}=0.
	 \end{equation*}
	Then, combining with \eqref{equ-all} and \eqref{equ-limit-varia},  we have
	\begin{equation}\label{equ-matrix-3}
	\begin{aligned}
	\begin{pmatrix}
	~ a^{11} & 0  & 0~\\
	~~\\
	~0 & a^{22}  & 0~\\
	~~\\
	~0 & 0  & a^{33}~
	\end{pmatrix} \begin{pmatrix}
	~ \frac{C_1^1+C_2^1}{2}-C_*^1~\\
	~~\\
	~\frac{C_1^2+C_2^2}{2}-C_*^2 ~\\
	~~\\
	~\frac{C_1^3+C_2^3}{2}-C_*^3 ~
	\end{pmatrix}=\begin{pmatrix}
	~ B^1~\\
	~~\\
	~ B^2 ~\\
	~~\\
	~ B^3 ~
	\end{pmatrix},
	\end{aligned}
	\end{equation}
	where
	\begin{align*}
	B^1&=\sum_{j=1}^2(b_j^1-b_{*j}^1)+\sum_{k=1}^3C_*^k(a_*^{k1}-a^{k1})-2(C_1^3-C_2^3)(a_{11}^{13}+a_{12}^{13}),\\
	B^2&=\sum_{j=1}^2(b_j^2-b_{*j}^2)+\sum_{k=1}^3C_*^k(a_*^{k2}-a^{k2}),\\
	B^3&=\sum_{j=1}^2(b_j^3-b_{*j}^3)+\sum_{k=1}^3C_*^k(a_*^{k3}-a^{k3})-2(C_1^1-C_2^1)(a_{11}^{13}+a_{12}^{13}).
	\end{align*}
 To estimate $b_j^k-b_{*j}^k$,  integrating by parts and using Lemma \ref{ineq-vil-v*}, we have
\begin{equation*}
\left|b_j^k-b_{*j}^k\right|=\Big|\int_{\ptl D}\frac{\ptl(v_j^k-v_j^{*k})}{\ptl \nu}\Big|_+\cdot\varphi \Big|\leq C\va^{\frac{\gamma}{1+2\gamma}},\quad k,\, j=1, \, 2.
\end{equation*}
Similarly, we also have  
\begin{equation*}
|b_j^3-b_{*j}^3|\leq C\va^{\frac{1+\gamma}{1+2\gamma}}, \quad j=1, \, 2.
\end{equation*}
 From the proof of lemma 3.2 in \cite{lhg}, we see that  
 \begin{equation}\label{ineq-akl-akl*}
|a^{kl}-a_{*}^{kl}|\leq C\va,\quad k,\, l=1, \,2, \,3.
 \end{equation}
Thus, by virtue of \eqref{esti-c1-c2} and \eqref{equal-c13=c23},  we have
	\begin{equation*}
	|B^l|\leq C\va^\frac{\gamma}{1+2\gamma}, \quad l=1, \, 2, \,3.
	\end{equation*}
Moreover, we notice from the first equation in \eqref{equ-v1+v2+limit} that
\begin{equation*}
a^{kk}=\sum_{i,\,j=1}^2 a_{ij}^{kk}=\int_{\Omega}\big(\mathbb{C}^0 e(v^k), e(v^k)\big) dx \leq C, \quad k=1, \, 2,\, 3.
\end{equation*}
Meanwhile, from \eqref{equ-v1+v2+limit}, we see that $v^{*k}\notin \Psi$ and is not a constant vector. By using \eqref{elliptic},  we have for $k= 1, \,2,\, 3$,
\begin{equation*}
a^{kk}_*=\int_{\Omega^*}\big(\mathbb{C}^0 e(v^{*k}\big), e(v^{*k})) dx\geq C\int_{\Omega^*} \big|e(v^{*k})\big|^2 dx >0.
\end{equation*}
Then, by virtue of \eqref{ineq-akl-akl*}, we have for small $\va>0$, 
\begin{equation*}
a^{kk}\geq C>0,\quad k=1, \, 2,\, 3.
\end{equation*}
  Applying the Cramer's rule to \eqref{equ-matrix-3}, we have for small $\va>0$,
	\begin{equation*}
	\Big|\frac{C_1^l+C_2^l}{2}-C_*^l\Big|=\frac{|B^l|}{a^{ll}}\leq C \va^{\frac{\gamma}{1+2\gamma}},\quad l=1, \,2, 3.
	\end{equation*}
	The proof of Lemma \ref{lem-ck-c*} is completed.
\end{proof}

Now, we use Lemma \ref{lem-v-v} and Lemma \ref{lem-ck-c*} to prove the following convergence proposition.

\begin{prop}\label{prop-blowup-factor-limit}
	Let $\widetilde{b}_{j}^{l}[\varphi]$ and $\widetilde{b}_{*j}^{l}[\varphi]$ be defined by \eqref{def-b-j} and \eqref{def-blowup-factor}, respectively. Then, for sufficiently small $\va>0$, we have
	\begin{equation}\label{convergence-blow-f}
	|\widetilde{b}_{j}^{k}[\varphi] - \widetilde{b}_{*j}^{k}[\varphi]|\leq C \va^{\frac{\gamma}{1+2\gamma}}, \quad  k=1, \, 2,\,3, ~~j=1, \, 2,
	\end{equation}
where $C$ is a universal constant.
\end{prop}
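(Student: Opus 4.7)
The plan is to reduce the comparison of $\widetilde{b}_j^k$ and $\widetilde{b}_{*j}^k$, which live on disjoint boundaries $\ptl D_j$ and $\ptl D_j^*$, to a comparison on the common outer boundary $\ptl D$, where Lemma \ref{lem-v-v} is available. The other inputs will be the bound on $|C_i^l-C_*^l|$ from Lemma \ref{lem-ck-c*}, the uniform bound \eqref{esti-infty-out-12} on $|\nabla v_j^k|$ on $\ptl D$, and the uniform bound \eqref{esti-C12} on $|C_i^l|$.

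The first step is to rewrite each blow-up factor through Green's first identity for the Lam\'e system. Since $\mathcal{L}_{\lam,\mu} u_b=0$ in $\Omega$ with $u_b|_{\ptl D_i}=\sum_l C_2^l\psi^l$ and $u_b|_{\ptl D}=\varphi$, integrating by parts twice against the test function $v_j^k$ (which equals $\delta_{ij}\psi^k$ on $\ptl D_i$ and vanishes on $\ptl D$) yields
\begin{equation*}
\widetilde{b}_j^k=-\int_{\ptl D}\frac{\ptl v_j^k}{\ptl\nu}\Big|_+\cdot\varphi\,dS-\sum_{l=1}^3 C_2^l\bigl(a_{j1}^{kl}+a_{j2}^{kl}\bigr).
\end{equation*}
The same manipulation in $\Omega^*$ (using that $u^*|_{\ptl D_i^*}=\sum_l C_*^l\psi^l$ and $u^*|_{\ptl D}=\varphi$, with test function $v_j^{*k}$) gives
\begin{equation*}
\widetilde{b}_{*j}^k=-\int_{\ptl D}\frac{\ptl v_j^{*k}}{\ptl\nu}\Big|_+\cdot\varphi\,dS-\sum_{l=1}^3 C_*^l\bigl(a_{*j1}^{kl}+a_{*j2}^{kl}\bigr),
\end{equation*}
where $a_{*ji}^{kl}:=-\int_{\ptl D_i^*}\frac{\ptl v_j^{*k}}{\ptl\nu}\big|_+\cdot\psi^l$.

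The key algebraic observation is that, because $e(\psi^l)=0$, testing $\mathcal{L}_{\lam,\mu} v_j^k=0$ against $\psi^l$ collapses the sum of the two $\ptl D_i$-integrals onto the outer boundary:
\begin{equation*}
a_{j1}^{kl}+a_{j2}^{kl}=-\int_{\ptl D}\frac{\ptl v_j^k}{\ptl\nu}\Big|_+\cdot\psi^l\,dS,\qquad a_{*j1}^{kl}+a_{*j2}^{kl}=-\int_{\ptl D}\frac{\ptl v_j^{*k}}{\ptl\nu}\Big|_+\cdot\psi^l\,dS.
\end{equation*}
Thus both sums are controlled by the $L^\infty$-norm of $|\nabla v_j^k|$ (resp.\ $|\nabla v_j^{*k}|$) on $\ptl D$, which is uniformly bounded by \eqref{esti-infty-out-12} and its analogue for $v_j^{*k}$; moreover, by Lemma \ref{lem-v-v}, their difference is of order $\va^{\gamma/(1+2\gamma)}$.

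Subtracting the two expressions and inserting $\pm C_*^l(a_{j1}^{kl}+a_{j2}^{kl})$ gives
\begin{align*}
|\widetilde{b}_j^k-\widetilde{b}_{*j}^k|&\leq\|\varphi\|_{L^\infty(\ptl D)}\int_{\ptl D}\Big|\frac{\ptl(v_j^k-v_j^{*k})}{\ptl\nu}\Big|_+\Big|\,dS\\
&\quad+\sum_{l=1}^3 |C_2^l-C_*^l|\cdot|a_{j1}^{kl}+a_{j2}^{kl}|+\sum_{l=1}^3 |C_*^l|\cdot\bigl|(a_{j1}^{kl}+a_{j2}^{kl})-(a_{*j1}^{kl}+a_{*j2}^{kl})\bigr|.
\end{align*}
The first term is $O(\va^{\gamma/(1+2\gamma)})$ by Lemma \ref{lem-v-v}; the second is the product of $O(\va^{\gamma/(1+2\gamma)})$ (Lemma \ref{lem-ck-c*}) with a universal constant; the third is $O(\va^{\gamma/(1+2\gamma)})$ by the identity above together with Lemma \ref{lem-v-v} and the uniform bound $|C_*^l|\leq C$ (from Lemma \ref{lem-ck-c*} combined with \eqref{esti-C12}). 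The anticipated obstacle is the second term: naively, $|a_{ji}^{kk}|$ may diverge like $\va^{-\gamma/(1+\gamma)}$, so one cannot afford to pair $|C_2^l-C_*^l|$ with individual $a_{ji}^{kl}$'s. The miraculous cancellation in the sum $a_{j1}^{kl}+a_{j2}^{kl}$ via the rigid-motion identity is what makes the argument close, reducing it to a universal bound independent of $\va$.
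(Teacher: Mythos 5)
Your proof is correct, and it reaches the conclusion by a slightly different (and arguably cleaner) route than the paper's. The paper substitutes the decompositions $u_b=\sum_l C_2^l v^l+v_0$ and $u^*=\sum_l C_*^l v^{*l}+v_0^*$ directly into the definitions, producing the three-term split in \eqref{equal-bjk-b*} with integrals still living on $\ptl D_1$ and $\ptl D_1^*$; the bounds \eqref{ineq-b12-b*} and \eqref{ineq-vlk} then invoke an integration-by-parts ``flip to $\ptl D$'' argument cited from \cite{lhg}. You instead perform that flip once, up front, via Green's second identity, obtaining a closed-form expression for each $\widetilde{b}_j^k$ purely in terms of $\ptl D$-integrals and the sums $a_{j1}^{kl}+a_{j2}^{kl}$. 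The explicit identity
\[
a_{j1}^{kl}+a_{j2}^{kl}=-\int_{\ptl D}\frac{\ptl v_j^k}{\ptl\nu}\Big|_+\cdot\psi^l\,dS,
\]
which the paper never records, is the structural reason the individually divergent $a_{11}^{kk}\sim\va^{-\gamma/(1+\gamma)}$ terms never spoil the estimate; you have isolated exactly this cancellation. The remaining ingredients are identical on both sides: Lemma \ref{lem-v-v} for the $\ptl D$-boundary closeness of $v_j^k$ and $v_j^{*k}$, Lemma \ref{lem-ck-c*} for $|C_2^l-C_*^l|\leq C\va^{\gamma/(1+2\gamma)}$, and the uniform bound \eqref{esti-infty-out-12} away from the neck. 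So this is a valid reorganization rather than a new method, but it buys transparency: the source of each $\va^{\gamma/(1+2\gamma)}$ factor is visible at the level of the algebra, and no appeal to an auxiliary argument from another reference is needed.
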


\begin{proof}
	We only prove \eqref{convergence-blow-f} for $j=1$ since the other cases are same. Recalling the definitions of $u_b$ and $u^*$ in \eqref{def-u-b} and \eqref{def-ub*}, we have
	\begin{align}\label{equal-bjk-b*}
	&\widetilde{b}_1^k[\varphi]-\widetilde{b}_{*1}^k[\varphi] \notag\\
	&= \Big(\int_{\ptl D_1}\frac{\ptl v_0}{\ptl\nu}\Big|_+\cdot\psi^k
	-\int_{\ptl D_1^*}\frac{\ptl v_0^*}{\ptl \nu}\Big|_+\cdot\psi^k \Big)+ \sum_{l=1}^3 (C_2^l-C_*^l)\int_{\ptl D_1^*}\frac{\ptl v^{*l}}{\ptl \nu}\Big|_+\cdot \psi^k\notag \\
	&\qquad +\sum_{l=1}^3C_2^l\Big(\int_{\ptl D_1}\frac{\ptl v^l}{\ptl \nu}\Big|_+\cdot\psi^k-\int_{\ptl D_1^*}\frac{\ptl v^{*l}}{\ptl \nu}\Big|_+\cdot \psi^k\Big). 
	\end{align}
Since $v^{*l}$ takes the same value on $\ptl D_1^*\cup \ptl D_2^*$, $|\nabla v^{*l}|\leq C$ in $\Omega^*$ (see \eqref{v1+v2_bounded1}). Thus
	\begin{equation}\label{ineq-bound}
	\Big|\int_{\ptl D_j^*}\frac{\ptl v^{*l}}{\ptl \nu}\Big|_+\cdot \psi^k\Big|\leq C.
	\end{equation}
By virtue of \eqref{ineq-vil-v*} and \eqref{ineq-vi3-v*},  by the same approach of proposition 2.1 in \cite{lhg}, we obtain that
\begin{equation}\label{ineq-b12-b*}
\Big| \int_{\ptl D_1}\frac{\ptl v_0}{\ptl\nu}\Big|_+\cdot\psi^k
-\int_{\ptl D_1^*}\frac{\ptl v_0^*}{\ptl \nu}\Big|_+\cdot\psi^k\Big|\leq C\va^{\frac{\gamma}{1+2\gamma}}\|\varphi\|_{L^1(\ptl D)},\quad k=1,\, 2,\, 3
\end{equation}
and 
\begin{equation}\label{ineq-vlk}
\Big|\int_{\ptl D_1}\frac{\ptl v^l}{\ptl \nu}\Big|_+\cdot\psi^k-\int_{\ptl D_1^*}\frac{\ptl v^{*l}}{\ptl \nu}\Big|_+\cdot \psi^k\Big|\leq C\va^{\frac{\gamma}{1+2\gamma}}|\ptl D|,\quad k,\,l=1, \,2,\, 3.
\end{equation}
Substituting  \eqref{ineq-bound}-\eqref{ineq-vlk} and \eqref{ineq-c2-c*} into \eqref{equal-bjk-b*}, we obtain \eqref{convergence-blow-f}.  The proof Proposition \ref{prop-blowup-factor-limit} is completed.
\end{proof}

\begin{proof}[The proof of Theorem \ref{thm-lower-bound}]  Thanks to \eqref{equal-c13=c23}, 
we only need to estimate the lower bound of  $|C_1^k-C_2^k|$, $k=1,\, 2$. Recalling from \eqref{equal-aij=0} and \eqref{equal-c13=c23}, $C_1^3=C_2^3$ and $a_{11}^{12}=a_{11}^{21}=0$, then  \eqref{equ-two-lower} becomes
\begin{equation*}
\begin{aligned}
\begin{pmatrix}
~a_{11}^{11}  & 0~\\
~~\\
0& a_{11}^{22}  ~
\end{pmatrix}
\begin{pmatrix}
~C_1^1-C_2^1~\\
~~\\
~C_1^2-C_2^2~
\end{pmatrix}
=\begin{pmatrix}
~\widetilde{b}_1^1[\varphi]~\\
~~\\
~\widetilde{b}_1^2[\varphi]~
\end{pmatrix}.
\end{aligned}
\end{equation*}
Thus,
 \begin{equation}\label{equal-C11-C12}
 C_1^1-C_2^1=\frac{\widetilde{b}_1^1[\varphi]}{a_{11}^{11}}, \quad \text{and}\quad C_1^2-C_2^2=\frac{\widetilde{b}_1^2[\varphi]}{a_{11}^{22}}.
 \end{equation}
Noticing from \eqref{esti-a11bb} and Proposition \ref{prop-blowup-factor-limit}, if $\widetilde{b}_{*1}^{\,k}[\varphi] \neq 0$, we have for small $\va>0$,
	\begin{equation*}
	|C_1^k-C_2^k|\geq\frac{|\widetilde{b}_{*1}^{\,k}[\varphi] |}{C}\va^{\frac{\gamma}{1+\gamma}},\quad k=1,\, 2.
	\end{equation*}
	Thus, if there exists $\varphi\in L^\infty(\ptl D; \R^2)$ satisfying \eqref{assup-varphi} such that $\widetilde{b}_{*1}^{\,k_0}[\varphi] \neq 0$ for some $k_0\in \{1, \, 2\}$, in view of \eqref{nabla-w-i0} and the definition of $\bar{u}_1^k$, \eqref{def-function1}, one has for sufficiently small $\va>0$,
	\begin{align*}
	|\nabla u(x)|&\geq \Big|\sum\limits_{k=1}^2(C_{1}^k-C_{2}^k) \nabla \bar{u}_1^k(x) \Big| -\sum\limits_{k=1}^2|C_{1}^k-C_{2}^k|\cdot|\nabla ({v}_{1}^k-\bar{u}_1^k)(x)|-C\\
	&\geq \Big|\sum\limits_{k=1}^2(C_{1}^k-C_{2}^k) \nabla \bar{u}_1^k(x) \Big| - C\va^{-\frac{1-\gamma}{1+\gamma}} \geq \frac{|\widetilde{b}_{*1}^{\,k_0}[\varphi] |}{C\va}  \va^{\frac{\gamma}{1+\gamma}}- C\va^{\frac{\gamma-1}{1+\gamma}}\\
	&\geq \frac{|\widetilde{b}_{*1}^{\,k_0}[\varphi] |}{C}\va^{-\frac{1}{1+\gamma}}, \quad\text{for} ~~ x\in \overline{P_1 P_2}.
	\end{align*}
	The proof of Theorem \ref{thm-lower-bound} is finished.
\end{proof}

\vspace{0.5cm}

\section{Asymptotic expansion of $\nabla u$}\label{sec-asymp}

In this section, we prove Theorem \ref{thm-asymptotics}. Recalling the decomposition in \eqref{decomposition_u2} and \eqref{equal-c13=c23}, we can rewrite
\begin{equation}\label{asymp-nabla-u}
\nabla u= \sum_{k=1}^2(C_1^k-C_2^k)\nabla v_1^k +\nabla u_b,\quad \text{in}~~\Omega_{R_1}.
\end{equation}
By virtue of \eqref{equal-C11-C12} and \eqref{convergence-blow-f}, we have
\begin{equation}\label{asymp-c1k-c2k}
 C_1^k-C_2^k=\frac{\widetilde{b}_1^k[\varphi]}{a_{11}^{kk}}=\frac{1}{a_{11}^{kk}}\big( \widetilde{b}_{*1}^{\,k}[\varphi] +O(\va^{\frac{\gamma}{1+2\gamma}})\big), \quad k=1, \, 2.
\end{equation}
In view of \eqref{nabla-w-i0} and \eqref{ineq-bdd-ub}, it suffices to prove the  asymptotic expansion of $a_{11}^{kk}$, $k=1, \, 2$. To this end, we need the follow proposition. We follow the idea used in \cite{LX1} and provide the main differences for reader's convenience.

\begin{prop}\label{proposition-a11-22}
	Under assumptions \eqref{asymp-symm-h1h2} and \eqref{h1h3-1}, we have, for sufficient small $\va>0$,
	\begin{equation}\label{asym-a1111}
	\begin{aligned}
	a_{11}^{11}&=\frac{\mu}{\va^{\gamma/(1+\gamma)}}\cdot\frac{\widetilde{Q}_\gamma}{\kappa^{1/(1+\gamma)}}
\Big(1+O(\va^{\frac{\gamma}{1+\gamma}}|\ln \va|)\Big),\\
a_{11}^{22}&=\frac{\lam+2\mu}{\va^{\gamma/(1+\gamma)}}\cdot\frac{\widetilde{Q}_\gamma}{\kappa^{1/(1+\gamma)}} \Big(1+O(\va^{\frac{\gamma}{1+\gamma}}|\ln \va|)\Big),
	\end{aligned}
	\end{equation}
	where $\widetilde{Q}_\gamma$ is defined by \eqref{def-wide-Q}.
\end{prop}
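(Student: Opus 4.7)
The plan is to exploit the decomposition $v_1^k=\bar u_1^k+w_1^k$ inside $a_{11}^{kk}=\int_\Omega (\mathbb{C}^0 e(v_1^k),e(v_1^k))\,dx$ and show that the dominant contribution comes from a single explicitly computable one-dimensional integral. More precisely, I expand
\[
a_{11}^{kk}=\int_\Omega \!\bigl(\mathbb{C}^0 e(\bar u_1^k),e(\bar u_1^k)\bigr)\,dx+2\!\int_\Omega\!\bigl(\mathbb{C}^0 e(\bar u_1^k),e(w_1^k)\bigr)\,dx+\int_\Omega\!\bigl(\mathbb{C}^0 e(w_1^k),e(w_1^k)\bigr)\,dx.
\]
The estimate \eqref{nabla-w-i0} gives $|\nabla w_1^k|\le C(\va+|x_1|^{1+\gamma})^{-1/(1+\gamma)}$, which after integration over the $x_2$-slab of width $\delta(x_1)$ shows both the cross term and the quadratic $w_1^k$ term are $O(|\ln\va|)$, hence enter the relative error $O(\va^{\gamma/(1+\gamma)}|\ln\va|)$. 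Contributions from $\Omega\setminus\Omega_{R_1}$ are $O(1)$ by \eqref{esti-infty-out-12}.

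Next I split the main integral as $\int_{\Omega_{R_1}}+\int_{\Omega\setminus\Omega_{R_1}}$, the latter being $O(1)$ by \eqref{ubar-out}. In $\Omega_{R_1}$ the integrand is, for $k=1$,
\[
\bigl(\mathbb{C}^0 e(\bar u_1^1),e(\bar u_1^1)\bigr)=\mu(\partial_2\bar u)^2+(\lambda+2\mu)(\partial_1\bar u)^2,
\]
and for $k=2$ the coefficients $\mu$ and $\lambda+2\mu$ swap (as follows from a direct matrix computation with $\psi^1=e_1$, $\psi^2=e_2$). Using \eqref{nablau_bar-interior} we have
\[
\int_{\Omega_{R_1}}(\partial_1\bar u)^2\,dx\le C\!\int_{|x_1|<R_1}\!\!\frac{|x_1|^{2\gamma}}{\va+|x_1|^{1+\gamma}}\,dx_1=O(|\ln\va|),
\]
so only $\mu\!\int_{\Omega_{R_1}}(\partial_2\bar u)^2\,dx$ (respectively $(\lambda+2\mu)\!\int_{\Omega_{R_1}}(\partial_2\bar u)^2\,dx$ for $k=2$) contributes to the leading order.

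The key computation is therefore
\[
\int_{\Omega_{R_1}}(\partial_2\bar u)^2\,dx=\int_{|x_1|<R_1}\!\!\frac{dx_1}{\delta(x_1)},\qquad \delta(x_1)=\va+2h_1(x_1),
\]
where the symmetry $h_2=-h_1$ from \eqref{asymp-symm-h1h2} was used. Inserting $2h_1(x_1)=\kappa|x_1|^{1+\gamma}+O(|x_1|^{2+\gamma})$ and performing the rescaling $x_1=(\va/\kappa)^{1/(1+\gamma)}t$ gives
\[
\int_{\mathbb R}\frac{dx_1}{\va+\kappa|x_1|^{1+\gamma}}=\frac{\widetilde Q_\gamma}{\kappa^{1/(1+\gamma)}\va^{\gamma/(1+\gamma)}}.
\]
Truncating $\mathbb R$ to $|x_1|<R_1$ costs $O(1)$, and replacing $\kappa|x_1|^{1+\gamma}$ by $2h_1(x_1)$ introduces an error bounded by $\int \frac{|x_1|^{2+\gamma}}{(\va+|x_1|^{1+\gamma})^2}\,dx_1$, which one checks (splitting at $|x_1|=\va^{1/(1+\gamma)}$) is $O(1)$. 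Combining these pieces produces the stated formulas.

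The delicate step, where I expect the main accounting to lie, is the correction arising from the $O(|x_1|^{2+\gamma})$ remainder in $h_1$: it must be shown to contribute only bounded (not $\va^{-\gamma/(1+\gamma)}$) error, and the cleanest way is to write $\frac{1}{\va+2h_1}=\frac{1}{\va+\kappa|x_1|^{1+\gamma}}-\frac{2h_1-\kappa|x_1|^{1+\gamma}}{(\va+\kappa|x_1|^{1+\gamma})(\va+2h_1)}$ and bound the remainder directly. Once this is in hand, the bounds $\mu,\lambda+2\mu\ge\delta_0$ let one factor out the leading scale and write $a_{11}^{11}=\mu\widetilde Q_\gamma\kappa^{-1/(1+\gamma)}\va^{-\gamma/(1+\gamma)}+O(|\ln\va|)$, and analogously for $a_{11}^{22}$, which is exactly \eqref{asym-a1111}.
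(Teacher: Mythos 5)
Your proof is correct, and it takes a genuinely simpler route than the paper's. The paper splits the domain three ways ($\Omega\setminus\Omega_{R_1}$, $\Omega_{R_1}\setminus\Omega_{\va^\theta}$, $\Omega_{\va^\theta}$ with $\theta=\gamma/(1+2\gamma)^2$), introduces the $\va=0$ limiting solution $v_1^{*1}$ and its auxiliary function $\bar u_1^{*1}$, and invokes Lemma~\ref{lem-v-v} together with the interpolation and rescaling machinery from Section~\ref{sub-lower-bounds} to pass between $v_1^1$ and $v_1^{*1}$ in the intermediate annulus; the final error after assembling the pieces is nonetheless only $O(|\ln\va|)$, exactly what you obtain. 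You sidestep all of that by expanding the bilinear form of $v_1^k=\bar u_1^k+w_1^k$ in $\Omega$ directly, noting the cross term is $O(|\ln\va|)$ from the pointwise bounds $|\nabla\bar u|\le C/\delta(x_1)$ and $|\nabla w_1^k|\le C\delta(x_1)^{-1/(1+\gamma)}$ after the $x_2$-integration (indeed $\int_{|x_1|<R_1}\delta(x_1)^{-1/(1+\gamma)}dx_1=O(|\ln\va|)$), the diagonal $w$-term is $O(1)$ by the total energy bound \eqref{energy-nabla-w11}, and then computing $\mu\int_{|x_1|<R_1}\delta(x_1)^{-1}dx_1$ explicitly. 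Your algebra for $(\mathbb{C}^0 e(\bar u_1^k),e(\bar u_1^k))$ (coefficients $\mu$ and $\lambda+2\mu$ swapping between $k=1$ and $k=2$), the $O(1)$ control of $\int(\partial_1\bar u)^2$, and the replacement errors (tail beyond $R_1$, and replacing $2h_1$ by $\kappa|x_1|^{1+\gamma}$ via $\int|x_1|^{2+\gamma}(\va+|x_1|^{1+\gamma})^{-2}dx_1=O(1)$, splitting at $|x_1|=\va^{1/(1+\gamma)}$) all check out. What the paper's more elaborate argument buys is a slightly sharper description of the subleading pieces (e.g.\ $\mathrm{I}_1=\mathrm{I}_1^*+O(\va^{\gamma^2/((1+\gamma)(1+2\gamma))})$ with $\mathrm{I}_1^*$ an $\va$-independent constant), but for the stated conclusion with relative error $O(\va^{\gamma/(1+\gamma)}|\ln\va|)$ your direct argument suffices and is cleaner.
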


\begin{proof}
	For $a_{11}^{11}$, we notice that
	\begin{align*}
	a_{11}^{11}&=\int_{\Omega\setminus\Omega_{R_1}}\Big(\mathbb{C}^0e(v_1^1), e(v_1^1)\Big) dx+ \int_{\Omega_{R_1}\setminus\Omega_{\va^\theta}} \Big(\mathbb{C}^0e(v_1^1), e(v_1^1)\Big) dx\\
	&\qquad + \int_{\Omega_{\va^\theta}} \Big(\mathbb{C}^0e(v_1^1), e(v_1^1)\Big) dx=:\mathrm{I}_1+\mathrm{I}_2+\mathrm{I}_3,
	\end{align*}
	where $\theta= \frac{\gamma}{(1+2\gamma)^2}$. We will give the  asymptotic expansion of $\mathrm{I}_j$, $j=1, \, 2, \, 3$, one by one. 
	
	{\bf Step 1.} For $\mathrm{I}_1$, we claim that there exists a constant $\mathrm{I}^*_1>0$, independent of $\va>0$, such that
	\begin{equation}\label{asymp-I1}
	\mathrm{I}_1=\mathrm{I}^*_1+O(\va^{\frac{\gamma^2}{(1+\gamma)(1+2\gamma)}}).
	\end{equation}
	Indeed, in view of \eqref{equ-v11-v11*}, it follows  from the classical elliptic estimates that
	\begin{equation}\label{ineq-bdd-C1alpha}
 [\nabla (v_1^1-v_1^{*1}) ]_{\gamma, \, V^*}\leq [\nabla v_1^1 ]_{\gamma, \, V^*} + [\nabla v_1^{*1} ]_{\gamma, \, V^*}\leq C, 
\end{equation}
where $V^*:=D\setminus(D_1\cup D_2 \cup D_1^*\cup D_2^*\cup\Omega_{R_1})$. This, together with \eqref{ineq-v11-v11*} and interpolation inequality, yields 
	\begin{equation}\label{ineq-nabla-v11-v11*}
	|\nabla (v_1^1- v_1^{*1})|\leq C\va^{\frac{\gamma^2}{(1+\gamma)(1+2\gamma)}},\quad \text{in} ~~D\setminus(D_1\cup D_2 \cup D_1^*\cup D_2^*\cup\Omega_{R_1}).
	\end{equation}
Then, taking
\begin{equation*}
\mathrm{I}_1^*:=\int_{\Omega^*_{R_1}\setminus\Omega^*_{\va^\theta}}\Big(\mathbb{C}^0e(v_1^{*1}), e(v_1^{*1})\Big) dx,
\end{equation*}
we have
\begin{align*}
\mathrm{I}_1-\mathrm{I}_1^*&=\int_{\Omega\setminus(D_1^*\cup D_2^*\cup\Omega_{R_1})}\Big[\big(\mathbb{C}^0e(v_1^{1}), e(v_1^{1})\big)-\big(\mathbb{C}^0e(v_1^{*1}), e(v_1^{*1})\big) \Big]dx\\
&\quad +\int_{(D_1^*\cup D_2^*)\setminus (D_1\cup D_2 \cup \Omega_{R_1})} \big(\mathbb{C}^0e(v_1^1,), e(v_1^1)\big) dx\\
&\quad-\int_{(D_1\cup D_2)\setminus(D_1^*\cup D_2^*)}\big(\mathbb{C}^0e(v_1^{*1}), e(v_1^{*1})\big) dx.
\end{align*}
Noticing that 
	\begin{equation*}
|(D_1^*\cup D_2^*)\setminus (D_1\cup D_2 \cup \Omega_{R_1})|\leq C\va, \quad |(D_1\cup D_2)\setminus(D_1^*\cup D_2^*)|\leq C\va,
	\end{equation*}
	 $|\nabla v_1^1|$ and $|\nabla v_1^{*1}|$ are bounded in $(D_1^*\cup D_2^*)\setminus (D_1\cup D_2 \cup \Omega_{R_1})$ and $(D_1\cup D_2)\setminus(D_1^*\cup D_2^*)$, respectively, we have
	\begin{equation*}
	\int_{(D_1^*\cup D_2^*)\setminus (D_1\cup D_2 \cup \Omega_{R_1})} \big(\mathbb{C}^0e(v_1^1,), e(v_1^1)\big) dx\leq C\va,
		\end{equation*}
		and
	\begin{equation*}
	\int_{(D_1\cup D_2)\setminus(D_1^*\cup D_2^*)}\big(\mathbb{C}^0e(v_1^{*1}), e(v_1^{*1})\big) dx\leq C\va.
	\end{equation*}
	Thus, using \eqref{ineq-nabla-v11-v11*}, we have
	\begin{align*}
\mathrm{I}_1-\mathrm{I}_1^*&=\int_{\Omega\setminus(D_1^*\cup D_2^*\cup\Omega_{R_1})}\big(\mathbb{C}^0e(v_1^{1}-v_1^{*1}), e(v_1^{1}-v_1^{*1})\big) dx \\
	& \quad +2\int_{\Omega\setminus(D_1^*\cup D_2^*\cup\Omega_{R_1})}\big(\mathbb{C}^0e(v_1^{*1}), e(v_1^{1}-v_1^{*1})\big) dx+O(\va)\\
	&=O(\va^{\frac{\gamma^2}{(1+\gamma)(1+2\gamma)}}).
	\end{align*}
Hence \eqref{asymp-I1} is proved.
	
	{\bf Step 2.}  Proof of 
	\begin{equation}\label{asym-I2*}
	\mathrm{I}_2= \mathrm{I}_2^* +O(|\ln \va|),
	\end{equation}
where
\begin{equation*}
\mathrm{I}_2^*:=\int_{\Omega_{R_1}^*\setminus \Omega_{\va^\theta}^*} \Big(\mathbb{C}^0e(\bar{u}_1^{*1}), e(\bar{u}_1^{*1})\Big) dx.
	\end{equation*}
Indeed, we notice that
\begin{align*}
	\mathrm{I}_2-\int_{\Omega_{R_1}^*\setminus \Omega_{\va^\theta}^*} \Big(\mathbb{C}^0e(v_1^{*1}), e(v_1^{*1})\Big) dx&=\int_{\Omega_{R_1}^*\setminus \Omega_{\va^\theta}^*} \Big(\mathbb{C}^0e(v_1^1-v_1^{*1}) , e(v_1^1-v_1^{*1})\Big) dx\\
	&\quad +\int_{(\Omega_{R_1}\setminus \Omega_{\va^\theta})\setminus(\Omega_{R_1}^*\setminus \Omega_{\va^\theta}^*)} \Big(\mathbb{C}^0e(v_1^1) , e(v_1^1)\Big) dx\\
	&\quad +2\int_{\Omega_{R_1}^*\setminus \Omega_{\va^\theta}^*} \Big(\mathbb{C}^0e(v_1^{*1}), e(v_1^1-v_1^{*1})\Big) dx\\
	&:=\mathrm{I}_{2,1}+\mathrm{I}_{2,2}+\mathrm{I}_{2,3}.
\end{align*}	
To estimate them,  we use the change of variables:
\begin{equation*}
\left\{\begin{aligned}
&x_1-z_1=|z_1|^{1+\gamma}y_1,\\
&x_2=|z_1|^{1+\gamma}y_2,
\end{aligned}\right.\quad  \text{for}~~ \va^\theta\leq |z_1|\leq R_1.
\end{equation*}
Then $\Omega_{|z_1|+|z_1|^{1+\gamma}} \setminus \Omega_{|z_1|}$ and $\Omega_{|z_1|+|z_1|^{1+\gamma}}^* \setminus \Omega_{|z_1|}^*$ are rescaled into the nearly cubes $Q_1$ and $Q_1^*$ in size, respectively. We denote
\begin{equation*}
V_1^1:=v_1^1(z_1+|z_1|^{1+\gamma}y_1, \, |z_1|^{1+\gamma}y_2),\quad  V_1^{*1}:=v_1^{*1}(z_1+|z_1|^{1+\gamma}y_1, \, |z_1|^{1+\gamma}y_2).
\end{equation*}
	By using \eqref{ineq-bdd-C1alpha} again, we have
	\begin{equation*}
 [\nabla V_1^1 ]_{\gamma, \, Q_1}\leq C,\quad\text{and} \quad [\nabla V_1^{*1} ]_{\gamma, \, Q_1^*}\leq C.
	\end{equation*}
	Interpolating it with \eqref{ineq-v11-v11*} yields 
\begin{equation*}
|\nabla (V_1^1-V_1^{*1})|\leq C\va^{\frac{\gamma^2}{(1+\gamma)(1+2\gamma)}}.
\end{equation*}
Then, rescaling back to $v_1^1-v_1^{*1}$, we have
\begin{equation}\label{ineq-nabla-v11}
|\nabla (v_1^1-v_1^{*1})|\leq C\va^{\frac{\gamma^2}{(1+\gamma)(1+2\gamma)}}|x_1|^{-(1+\gamma)},\quad \text{in} ~~\Omega_{R_1}^*\setminus \Omega_{\va^\theta}^*.
\end{equation}	
Similar, we have
\begin{equation}\label{ineq-nabla-v11*}
|\nabla v_1^1|\leq C|x_1|^{-(1+\gamma)},\quad \text{in}~~\Omega_{R_1}\setminus \Omega_{\va^\theta}, \quad \text{and} \quad |\nabla v_1^{*1}|\leq C|x_1|^{-(1+\gamma)},\quad \text{in}~~\Omega_{R_1}^*\setminus \Omega_{\va^\theta}^*.
\end{equation}
Thus , noticing that $|(\Omega_{R_1}\setminus \Omega_{\va^\theta})\setminus(\Omega_{R_1}^*\setminus \Omega_{\va^\theta}^*)|\leq C\va$ and using \eqref{ineq-nabla-v11} and \eqref{ineq-nabla-v11*}, a direct calculation leads to
\begin{equation*}
|\mathrm{I}_{2,\,1}|\leq C\va^{\frac{2\gamma^2}{(1+\gamma)(1+2\gamma)}}\int_{\va^\theta\leq |x_1|\leq R_1}\frac{1}{|x_1|^{1+\gamma}} dx_1 \leq C\va^{\frac{r^2(1+3\gamma)}{(1+\gamma)(1+2\gamma)^2}},
\end{equation*}
\begin{equation*}
|\mathrm{I}_{2, \, 2}|\leq C\va\int_{\va^\theta\leq |x_1|\leq R_1} \frac{1}{|x_1|^{2(1+\gamma)}} dx_1\leq C\va^{\frac{1+\gamma}{1+2\gamma}},
\end{equation*}
and
\begin{equation*}
|\mathrm{I}_{2, \, 3}|\leq C\va^{\frac{\gamma^2}{(1+\gamma)(1+2\gamma)}}\int_{\va^\theta\leq |x_1|\leq R_1} \frac{1}{|x_1|^{1+\gamma}} dx_1\leq C\va^{\frac{\gamma^3}{(1+\gamma)(1+2\gamma)^2}}.
\end{equation*}	
Hence, we have
\begin{equation}\label{asymp-I2-I2*}
\mathrm{I}_2=\int_{\Omega_{R_1}^*\setminus \Omega_{\va^\theta}^*} \Big(\mathbb{C}^0e(v_1^{*1}), e(v_1^{*1})\Big) dx+O(\va^{\frac{\gamma^3}{(1+\gamma)(1+2\gamma)^2}}).
\end{equation}
	
 Next, we use the auxiliary function $\bar{u}_1^{*1}$ defined in \eqref{def-bar-u*} to get \eqref{asym-I2*}. We notice that
	\begin{align}\label{equal-I-2*}
\int_{\Omega_{R_1}^*\setminus \Omega_{\va^\theta}^*} \Big(\mathbb{C}^0e(v_1^{*1}), e(v_1^{*1})\Big) dx-\mathrm{I}_2^*&=  2 \int_{\Omega_{R_1}^*\setminus \Omega_{\va^\theta}^*} \Big(\mathbb{C}^0e(\bar{u}_1^{*1}), e(v_1^{*1}-\bar{u}_1^{*1})\Big) dx \notag \\
&\quad +\int_{\Omega_{R_1}^*\setminus \Omega_{\va^\theta}^*} \Big(\mathbb{C}^0e(v_1^{*1}-\bar{u}_1^{*1}), e(v_1^{*1}-\bar{u}_1^{*1})\Big) dx.
	\end{align}
By virtue of \eqref{nabla-bar-u*} and \eqref{nabla-v*-123},  
\begin{equation*}
\Big|\int_{\Omega_{R_1}^*\setminus \Omega_{\va^\theta}^*} \Big(\mathbb{C}^0e(\bar{u}_1^{*1}), e(v_1^{*1}-\bar{u}_1^{*1})\Big) dx\Big| \leq \int_{\va^\theta\leq |x_1|\leq R_1} \frac{C}{|x_1|} dx_1 \leq C|\ln \va|,
\end{equation*}		
and
\begin{equation*}
\int_{\Omega_{R_1}^*\setminus \Omega_{\va^\theta}^*} \Big(\mathbb{C}^0e(v_1^{*1}-\bar{u}_1^{*1}), e(v_1^{*1}-\bar{u}_1^{*1})\Big) dx \leq \int_{\va^\theta\leq |x_1|\leq R_1} \frac{C}{|x_1|^{1-\gamma}} dx_1 \leq C.
\end{equation*}
These, together with \eqref{asymp-I2-I2*} and \eqref{equal-I-2*}, lead to \eqref{asym-I2*}.	
			
{\bf Step 3.} We complete the proof of  asymptotic expansion of $a_{11}^{11}$. Recalling that
\begin{align*}
\mathrm{I}_3&= \int_{\Omega_{\va^\theta}} \Big(\mathbb{C}^0e(v_1^1), e(v_1^1)\Big) dx\\
&= \int_{\Omega_{\va^\theta}} \Big( \mathbb{C}^0e(\bar{u}_1^1), e(\bar{u}_1^1)\Big) dx +2 \int_{\Omega_{\va^\theta}} \Big( \mathbb{C}^0e(\bar{u}_1^1), e(v_1^1-\bar{u}_1^1)\Big) dx\\
&\quad + \int_{\Omega_{\va^\theta}} \Big( \mathbb{C}^0e(v_1^1-\bar{u}_1^1), e(v_1^1-\bar{u}_1^1)\Big) dx, 
\end{align*}
and using \eqref{nablau_bar-interior} and \eqref{nabla-w-i0}, we obtain that
\begin{equation*}
\Big|\int_{\Omega_{\va^\theta}} \Big( \mathbb{C}^0e(\bar{u}_1^1), e(v_1^1-\bar{u}_1^1)\Big) dx \Big| \leq \int_{|x_1|\leq \va^\theta} \frac{C}{(\va+|x_1|^{1+\gamma})^{\frac{1}{1+\gamma}}} dx_1\leq C|\ln \va|,
\end{equation*}
and 
\begin{equation*}
\int_{\Omega_{\va^\theta}} \Big( \mathbb{C}^0e(v_1^1-\bar{u}_1^1), e(v_1^1-\bar{u}_1^1)\Big) dx\leq \int_{|x_1|\leq \va^\theta} \frac{C}{(\va+|x_1|^{1+\gamma})^{\frac{1-\gamma}{1+\gamma}}} dx\leq C.
\end{equation*}
Thus, we have
\begin{equation*}
\mathrm{I}_3=\int_{\Omega_{\va^\theta}} \Big( \mathbb{C}^0e(\bar{u}_1^1), e(\bar{u}_1^1)\Big) dx  +O(|\ln \va|).
\end{equation*}

Combining with \eqref{asymp-I1} and \eqref{asym-I2*}, we have
\begin{equation}\label{asymp-a11-half}
\begin{aligned}
a_{11}^{11}&=\int_{\Omega_{\va^\theta}} \Big( \mathbb{C}^0e(\bar{u}_1^1), e(\bar{u}_1^1)\Big) dx +\mathrm{I}_2^*+O(|\ln \va|)\\
&=\int_{|x_1|\leq \va^\theta} \frac{\mu \ dx_1}{\va+ \kappa|x_1|^{1+\gamma}+O(|x_1|^{2+\gamma})} \\
&\quad +\int_{\va^\theta\leq |x_1|\leq R_1} \frac{\mu \ dx_1}{ \kappa|x_1|^{1+\gamma}+O(|x_1|^{2+\gamma})} +O(|\ln \va|),
\end{aligned}
\end{equation} 
where the second equality is obtained by using \eqref{nablau_bar-interior},  \eqref{nabla-bar-u*} and \eqref{asymp-symm-h1h2}. Moreover, a direct calculation leads to
	\begin{equation*}
\int_{|x_1|\leq \va^\theta} \frac{1 }{\va+ \kappa|x_1|^{1+\gamma}+O(|x_1|^{2+\gamma})} \ dx_1=\int_{|x_1|\leq \va^\theta} \frac{1 }{\va+ \kappa|x_1|^{1+\gamma}} \ dx_1 +O(\va^{\frac{\gamma(1-\gamma)}{(1+2\gamma)^2}}),
	\end{equation*}
and 
\begin{equation*}
\int_{\va^\theta\leq |x_1|\leq R_1} \frac{1}{ \kappa|x_1|^{1+\gamma}+O(|x_1|^{2+\gamma})} \ dx_1=\int_{\va^\theta\leq |x_1|\leq R_1} \frac{1}{ \kappa|x_1|^{1+\gamma}} \ dx_1+O(1),
\end{equation*}
and
\begin{equation*}
\int_{\va^\theta\leq |x_1|\leq R_1} \Big( \frac{1}{ \kappa|x_1|^{1+\gamma}}- \frac{1}{\va+ \kappa|x_1|^{1+\gamma}}\Big)\ dx_1\leq  \int_{\va^\theta\leq |x_1|\leq R_1} \frac{C\va}{|x_1|^{2(1+\gamma)}} dx_1\leq C\va^{\frac{1+\gamma}{1+2\gamma}}.
\end{equation*}
These, together with \eqref{asymp-a11-half},  lead to
\begin{align*}
a_{11}^{11}&=\mu \int_{|x_1|\leq R_1} \frac{1}{\va+\kappa|x_1|^{1+\gamma}} \ dx_1 + O(|\ln \va|)\\
&=\frac{2\mu}{\va^{\frac{\gamma}{1+\gamma}}\kappa^{\frac{1}{1+\gamma}}} \int_{0}^{+\infty}\frac{1}{1+ t^{1+\gamma}} dt+ O(|\ln \va|)\\
&=\frac{\mu\widetilde{Q}_\gamma}{\va^{\frac{\gamma}{1+\gamma}}\kappa^{\frac{1}{1+\gamma}}}\Big(1+O(\va^{\frac{\gamma}{1+\gamma}}|\ln \va|)\Big).
\end{align*}
 The proof of $a_{11}^{22}$ in \eqref{asym-a1111} is the same. We omit it here.
\end{proof}

Now, we are in a position to prove Theorem \ref{thm-asymptotics}. 
\begin{proof}[Proof of Theorem \ref{thm-asymptotics}]
	Recalling from \eqref{asymp-nabla-u}, \eqref{ineq-bdd-ub} and \eqref{equal-C11-C12},  it follows from Propositions \ref{prop-blowup-factor-limit} and \ref{proposition-a11-22} that for $x\in \Omega_{R_1}$,
\begin{equation}\label{asym-nablau-half}
\begin{aligned}
\nabla u(x)&=\frac{\va^{\frac{\gamma}{1+\gamma}}\kappa^{\frac{1}{1+\gamma}}}{\widetilde{Q}_\gamma}
\Big( \frac{\widetilde{b}_{*1}^{1}[\varphi]}{\mu}\nabla v_1^1(x) +\frac{\widetilde{b}_{*1}^{2}[\varphi]}{\lam+2\mu}\nabla v_1^2(x) \Big)\Big(1+O(\va^{\frac{\gamma}{1+2\gamma}})\Big)\\
&\quad+O(1)\|\varphi\|_{L^\infty(\ptl D)}.
\end{aligned}
\end{equation}
By virtue of the definition of $\bar{u}_1^k$,  \eqref{nablau_bar-interior} and \eqref{nabla-w-i0}, we have, for $x\in \Omega_{\va^{1/(1+\gamma)}}$,
\begin{equation*}
\nabla v_1^k(x) = \frac{1}{\delta(x_1)}E_{k2}+O(\va^{-\frac{1}{1+\gamma}})=\frac{1}{\va+2h_1(x_1)}\Big(E_{k2}+O(\va^{\frac{\gamma}{1+\gamma}})\Big),\quad k=1, \, 2.
\end{equation*}
Substituting them into \eqref{asym-nablau-half},  we have \eqref{asymp-nabla}. The proof is completed.
\end{proof}

\vspace{0.5cm}

\section{ Estimates in Higher dimensions $d\geq 3$ }\label{sec4}

This section is devoted to proving Proposition \ref{prop-interior} in higher dimensions $d\geq 3$. We only list some main differences with that in two dimensions.

Recall that $v_i^l$ and $v_0$ is defined in \eqref{equ_v1} and \eqref{equ_v3}. From the first line of the decomposition in \eqref{decomposition_u2},  
\begin{equation*}
|\nabla u|\leq \sum\limits_{i=1}^2\sum_{l=1}^{\frac{d(d+1)}{2}}|C_i^l||\nabla v_i^l| +|\nabla v_0| \quad \text{in}~~\Omega.
\end{equation*}
Define $\bar{u}_1^l=\bar{u}\psi^l$ and $\bar{u}_2^l=\underline{u}\psi^l$ as \eqref{def-function1} and \eqref{def-function2} with $x_1$, $x_2$ replaced by $x'$, $x_d$, $l=1, 2, \cdots, \frac{d(d+1)}{2}$. 

\begin{proof}[Proof of Propostion \ref{prop-interior} for $d\geq 3$]
 	
To prove \eqref{nabla-v-i0}, by virtue of \eqref{nablau_bar-interior}, one has for $i=1,\,2$, $l=1,\,2,\cdots, \, d$,
\begin{equation*}
|\nabla \bar{u}_i^l(x)|\leq \frac{C}{\va+|x'|^{1+\gamma}}\quad x\in \Omega_{R_1}.
\end{equation*}
Noticing that the calculation of \eqref{ineq-semi-holder-norm} still holds for $d\geq 3$. Instead of \eqref{ineq-s-va},  taking $$\mathcal{M}_3=\pvint_{\widehat{\Omega}_s(z')} \mathbb{C}^0e(\bar{u}_i^l(y)) \ dy, $$ 
we have, for $0\leq |z'|\leq \va^{\frac{1}{1+\gamma}}$ and $0<s<\va^{\frac{1}{1+\gamma}}$,
\begin{align*}
&\int_{\widehat{\Omega}_s(z')}|\mathbb{C}^0e(\bar{u}_i^l)-\mathcal{M}_3|^2 \ dx \\
&\leq C\Big(\frac{s^{d+1}}{\va^{1+\frac{2}{1+\gamma}}}
+\frac{s^{d-1}}{\va^{\frac{2}{1+\gamma}-1}}
+\frac{s^{d+1-2\gamma}}{\va^{1+\frac{2}{1+\gamma}-2\gamma}}
+\frac{s^{d-1+2\gamma}}{\va^{2\gamma+\frac{2}{1+\gamma}-1}}\Big)=:\widetilde{G}_1(s).
\end{align*}
Denote $F(t):=\int_{\widehat{\Omega}_t(z')} |\nabla (v_i^l-\bar{u}_i^l)|^2 \ dx$,
\begin{equation}\label{ineq-itera-G}
F(t)\leq \Big(\frac{C_1\va}{s-t}\Big)^2F(s)+C\widetilde{G}_1(s),\quad \forall~~ 0<t<s< \va^{\frac{1}{1+\gamma}}.
\end{equation}
Similar to {\bf Case 1} in {\bf STEP 2} in Subsection \ref{sub-3.1}, set $t_j=\delta(z')+2C_1j\va$, $j=0,\, 1,\, 2,\, \cdots$, and let $k=\Big[\frac{1}{4C_1\va^{\frac{\gamma}{1+\gamma}}}\Big]$. By using \eqref{ineq-itera-G} with $s=t_{j+1}$ and $t=t_j$, we have 
\begin{equation*}
F(t_j)\leq \frac{1}{4}F(t_{j+1})+C(j+1)^{d+1}\va^{d-\frac{2}{1+\gamma}}, \quad j=1, \, 2,\, \cdots.
\end{equation*}
After $k$ iterations, we obtain
\begin{equation*}
\int_{\widehat{\Omega}_{\delta(z')}(z')}|\nabla (v_i^l-\bar{u}_i^l)|^2 \ dx \leq C \va^{d-\frac{2}{1+\gamma}}.
\end{equation*}
Instead of \eqref{ineq-H-z}, for $\va^{\frac{1}{1+\gamma}}<|z'|<R_1$, $0<s<\frac{2}{3}|z'|$, 
\begin{align*}
&\int_{\widehat{\Omega}_s(z')}|\mathbb{C}^0e(\bar{u}_i^l)-\mathcal{M}_3|^2 \ dx \\
&\leq C\Big(\frac{s^{d+1}}{|z'|^{3+\gamma}}
+\frac{s^{d-1}}{|z'|^{1-\gamma}}
+\frac{s^{d+1-2\gamma}}{|z'|^{3-\gamma-2\gamma^2}}
+\frac{s^{d-1+2\gamma}}{|z'|^{1+\gamma+2\gamma^2}}\Big)=:\widetilde{G}_2(s).
\end{align*}
and
\begin{equation}\label{ineq-itera-H}
F(t)\leq\Big(\frac{C_2|z'|^{1+\gamma}}{s-t}\Big)^2F(s)+C\widetilde{G}_2(s),\quad \forall ~~0<t<s<\frac{2}{3}|z'|.
\end{equation}
Let $t_j=\delta(z')+2C_2j|z'|^{1+\gamma}$, $j=0, \, 1,\, 2,\cdots$ and $k=\Big[\frac{1}{4C_2|z'|^\gamma}\Big]$. By \eqref{ineq-itera-H} with $t=t_j$ and $s=t_{j+1}$, we have
\begin{equation*}
F(t_j)\leq \frac{1}{4}F(t_{j+1})+C(j+1)^{d+1}|z'|^{(1+\gamma)(d-\frac{2}{1+\gamma})}, \quad j=1, \, 2, \, \cdots.
\end{equation*}
After $k$ iteration, we have 
\begin{equation*}
\int_{\widehat{\Omega}_{\delta(z')}(z')}|\nabla (v_i^l-\bar{u}_i^l)|^2  \ dx \leq C|z'|^{(1+\gamma)(d-\frac{2}{1+\gamma})}.
\end{equation*}
Thus, instead of \eqref{energy_w11_inomega_z1}, we have
\begin{equation}\label{local-energy-esti}
\int_{\widehat{\Omega}_{\delta(z')}(z')}|\nabla (v_i^l-\bar{u}_i^l)|^2 \ dx \leq C \delta(z')^{d-\frac{2}{1+\gamma}}.
\end{equation}
As in {\bf STEP 3}  in Subsection \ref{sub-3.1}, by using Theorems \ref{lem-global-C1alp-estimates} and \ref{lem-lp-esti}, we have, instead of \eqref{ineq-scale-original}
\begin{align*}
&\|\nabla (v_i^l-\bar{u}_i^l)\|_{L^\infty(\widehat{\Omega}_{\delta(z')/4}(z'))}\\
&\leq \frac{C}{\delta(z')} \big(\delta(z')^{1-\frac{d}{2}}\|\nabla(v_i^l-\bar{u}_i^l)\|_{L^2(\widehat{\Omega}_{\delta(z')}(z'))}+\delta(z')^{1+\gamma}[\nabla \bar{u}]_{\gamma, \widehat{\Omega}_{\delta(z')}(z')} \big).
\end{align*}
By using \eqref{local-energy-esti} and \eqref{ineq-semi-holder-norm}, one has \eqref{nabla-v-i0} holds for $d\geq 3$.

In order to prove \eqref{nabla-v-i3}, from the definition of $\bar{u}_i^l$ and \eqref{nablau_bar-interior},  one has for $i=1, \, 2$, $l=d+1, \, \cdots,\, \frac{d(d+1)}{2}$,
\begin{equation*}
|\nabla \bar{u}_i^l| \leq \frac{C(\va+|x'|)}{\va+|x'|^{1+\gamma}},\quad x\in\Omega_{R_1},\quad\text{and}\quad |\nabla \bar{u}_i^l(x)|\leq C,\quad x\in \Omega\setminus \Omega_{R_1}.
\end{equation*}
Instead of \eqref{ineq-ce-u3-1}, we have for $0<|z'|<\va^{\frac{1}{1+\gamma}}$, $0<s<\va^{\frac{1}{1+\gamma}}$,
\begin{align*}
&\int_{\widehat{\Omega}_s(z')}|\mathbb{C}^0e(\bar{u}_i^l)-\mathcal{M}_3|^2 \ dx\\ 
&\leq C\big(\va^{-1}s^{d+1}+\va^{1-2\gamma}s^{d+2\gamma-1}+\va^{2\gamma-1}s^{d+1-2\gamma}+\va s^{d-1} \big)=:\widehat{G}_1(s).
\end{align*}
Denoting $F(t)=\int_{\widehat{\Omega}_t(z')}|\nabla (v_i^l-\bar{u}_i^l)|^2 \ dx$, 
\begin{equation}\label{itera-va-s-t}
F(t)\leq \Big(\frac{C_1\va}{s-t}\Big)^2F(s) + C\widehat{G}_1(s), \quad \forall~~0<t<s<\va^{\frac{1}{1+\gamma}}.
\end{equation}
Similar to {\bf Case 1} in {\bf STEP 2} in Subsection \ref{sub-3.1}, set $t_j=\delta(z') +2C_1j \va$, $j=1, \, 2, \, \cdots$ and let $k=\Big[\frac{1}{4C_1\va^{\frac{\gamma}{1+\gamma}}}\Big]$. By taking $s=t_{j+1}$ and $t=t_j$ in \eqref{itera-va-s-t}, one has 
\begin{equation*}
F(t_j)\leq \frac{1}{4} F(t_{j+1}) + C(j+1)^{d+1}\va^d.
\end{equation*}
After $k$ iteration, one has
\begin{equation*}
\int_{\widehat{\Omega}_{\delta(z')}(z')} |\nabla (v_i^l -\bar{u}_i^l)|^2 \ dx \leq C\va^d.
\end{equation*}
For $\va^{\frac{1}{1+\gamma}}<|z'|<R_1$, $0<s<\frac{2}{3}|z'|$, instead of \eqref{ineq-bar-h-s} and \eqref{intera-z-F}, one has
\begin{align*}
&\int_{\widehat{\Omega}_s(z')}|\mathbb{C}^0e(\bar{u}_i^l)-\mathcal{M}_3|^2 \ dx\\
&\leq C\left(\frac{s^{d+1}}{|z'|^{1+\gamma}}
+\frac{s^{d-1+2\gamma}}{|z'|^{(1+\gamma)(2\gamma-1)}}
+\frac{s^{d+1-2\gamma}}{|z'|^{(1+\gamma)(1-2\gamma)}}
+\frac{s^{d-1}}{|z'|^{-1-\gamma} }\right)=:\widehat{G}_2(s),
\end{align*}
and
\begin{equation}\label{itera-bar-H}
F(t)\leq \Big(\frac{C_2|z'|^{1+\gamma}}{s-t}\Big)^2 F(s) +C\widehat{G}_2(s),\quad \forall ~~ 0< t< s< \frac{2}{3}|z'|.
\end{equation}
Let $t_j=\delta(z')+2C_2j|z'|^{1+\gamma}$, $j=0,\, 1, \, 2, \cdots$, and $k=\Big[\frac{1}{4C_2|z'|^\gamma}\Big]$, by taking $s=t_{j+1}$ and $t=t_j$ in \eqref{itera-bar-H}. After $k$ iteration, we have
\begin{equation*}
\int_{\widehat{\Omega}_{\delta(z')}(z')}|\nabla (v_i^l- \bar{u}_i^l)|^2  \ dx \leq C|z'|^{(1+\gamma)d}.
\end{equation*}
Thus, we have 
\begin{equation}\label{local-engy-d-3}
\int_{\widehat{\Omega}_{\delta(z')}(z')}|\nabla (v_i^l- \bar{u}_i^l)|^2  \ dx \leq C\delta(z')^d, \quad i=1, \, 2, ~~l=d+1,\, \cdots, \, \frac{d(d+1)}{2}.
\end{equation}
As in {\bf STEP 3} in Subsection \ref{sub-3.1}, by virtue of \eqref{ineq-holder} and \eqref{ineq-semi-holder-norm}, a calculation yields
\begin{equation*}
[\mathbb{C}^0e(\bar{u}_i^l)]_{\gamma, \widehat{\Omega}_{\delta(z')}(z')} 
\leq C[\nabla \bar{u}_i^l]_{\gamma, \widehat{\Omega}_{\delta(z')}(z')}
\leq C\sum\limits_{i,j=1}^3[x_i\ptl_j\bar{u}]_{\gamma,\, \widehat{\Omega}_{\delta(z')}(z')}
\leq C \delta(z')^{-\gamma}.
\end{equation*}
By using \eqref{local-engy-d-3}, one has
\begin{align*}
&\|\nabla (v_i^l -\bar{u}_i^l)\|_{L^\infty(\widehat{\Omega}_{\delta(z')/4}(z'))}\\
&\leq \frac{C}{\delta(z')} \Big(\delta(z')^{1-\frac{d}{2}}\|\nabla (v_i^l-\bar{u}_i^l)\|_{L^2(\widehat{\Omega}_{\delta(z')}(z'))}+\delta(z')^{1+\gamma}[\nabla \bar{u}_i^l]_{\gamma, \widehat{\Omega}_{\delta(z')}(z')}\Big)\leq C.
\end{align*}
Thus, \eqref{nabla-v-i3} holds for $d\geq 3$. The proof is completed.
\end{proof}

\vspace{0.5cm}

\section{Appendix: $C^{1,\gamma}$ estimates and $W^{1, p}$ estimates}\label{sec-c1-alp}

For readers' convenience, we adapt the $C^{1,\alp}$ estimates and the $W^{1, p}$ estimates for a constant coefficient elliptic system to our setting, especially with the boundary data being partially zero. When applying these estimates to prove Proposition \ref{prop-interior}, we observe that the H\"{o}lder semi-norm of inhomogeneous term appeared in \eqref{w20'} allows us to obtain the upper and lower bound estimates of $|\nabla v_i^l(x)|$ in $\Omega_{R_1}$, see the proof of \eqref{nabla-v-i0} and \eqref{nabla-v-i3}. Our proof is based on the standard methods in \cite{gm} with using the H\"{o}lder semi-norm to replace the H\"{o}lder norm there. 

\subsection{$C^{1,\gamma}$ estimates } In this subsection, we shall use the Campanato's approach, see e.g. \cite{gm}, to prove Theorem \ref{lem-global-C1alp-estimates}. 

Let $Q$ be a  Lipschitz domain in $\R^d$, the  Campanato space $\mathcal{L}^{2,\lam}(Q)$, $\lam \geq 0$, is defined as follows
\begin{equation*}
\mathcal{L}^{2,\lam}(Q):=\Big\{u\in L^2(Q)~:~\sup_{\substack{x_0\in Q \\ \rho>0}}\frac{1}{\rho^{\lam}}\int_{B_\rho(x_0)\cap Q}|u-u_{x_0, \rho}|^2 dx< +\infty \Big\},
\end{equation*}
where $u_{x_0, \rho}:=\frac{1}{|Q\cap B_\rho(x_0)|}\int_{Q\cap B_\rho(x_0)}u(x)\,dx$. It is endowed with the norm 
\begin{equation*}
\|u\|_{\mathcal{L}^{2,\lam}(Q)}:=\|u\|_{L^2(Q)}+[u]_{\mathcal{L}^{2, \lam}(Q)},
\end{equation*}
where the semi-norm $[\cdot]_{\mathcal{L}^{2,\lam}(Q)}$ is defined by
\begin{equation*}
[u]^2_{\mathcal{L}^{2, \lam}(Q)}:=\sup_{\substack{ x_0\in Q \\ \rho>0}}\frac{1}{\rho^{\lam}}\int_{B_\rho(x_0)\cap Q}|u-u_{x_0, \rho}|^2 dx.
\end{equation*}
It is clear that if $d< \lam \leq d+2$ and $\gamma=\frac{\lam-d}{2}$, the Campanato space $\mathcal{L}^{2,\lam}(Q)$ is equivalent to the H\"{o}lder space $C^{0, \gamma}(Q)$.

We first recall a classical result in \cite{gm}.

\begin{theorem}(Theorem 5.14 in \cite{gm})\label{thm-514}
Let $Q$ be a bounded Lipschitz domain in $\R^d$. Let $\widetilde{w}\in H^1(Q; \R^d)$ be a solution of 
\begin{equation*}
-\ptl_j\big(C_{ijkl}\ptl_l \widetilde{w}^{(k)} \big) =\ptl_j \widetilde{f}_{ij} \quad \text{in}\quad Q 
\end{equation*}
with $\widetilde{f}_{ij}\in C^{\gamma}(Q)$, $0<\gamma<1$, and constant coefficients $C_{ijkl}$ satisfying \eqref{def-C-0} and \eqref{elliptic}. Then $\nabla \widetilde{w} \in \mathcal{L}_{loc}^{2, \, d+2\gamma}(Q)$ and for $B_R:=B_R(x_0)\subset Q$, 
\begin{equation*}
\|\nabla \widetilde{w}\|_{\mathcal{L}^{2, d+2\gamma}(B_{R/2})}\leq C\left(\|\nabla \widetilde{w}\|_{L^2(B_R)}+[\widetilde{F}]_{\mathcal{L}^{2, d+2\gamma}( B_R)}\right),
\end{equation*}
where $\widetilde{F}:= (\widetilde{f}_{ij})$ and $C=C(d, \gamma, R)$.
\end{theorem}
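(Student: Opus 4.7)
The plan is to follow the classical Campanato freezing-coefficients scheme. Since the coefficients $C_{ijkl}$ are already constant, no freezing is actually needed; the role of the ``frozen'' equation is played directly by the homogeneous system associated to $\mathbb{C}^0$. So for $x_0 \in Q$ and $0 < \rho \leq R$ with $B_\rho(x_0) \subset B_R \subset Q$, the first step is to split $\widetilde{w} = v + z$ on $B_\rho(x_0)$, where $v$ solves the homogeneous system
\begin{equation*}
-\ptl_j\big(C_{ijkl}\ptl_l v^{(k)}\big) = 0 \quad \text{in } B_\rho(x_0), \qquad v = \widetilde{w} \quad \text{on } \ptl B_\rho(x_0),
\end{equation*}
and $z = \widetilde{w} - v \in H^1_0(B_\rho(x_0); \mathbb{R}^d)$ satisfies $-\ptl_j(C_{ijkl}\ptl_l z^{(k)}) = \ptl_j \widetilde{f}_{ij}$.

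Next I would record two separate estimates. For $v$, interior estimates for constant-coefficient homogeneous systems (standard difference-quotient/analyticity arguments) give the Morrey-type decay
\begin{equation*}
\int_{B_\sigma(x_0)} \bigl|\nabla v - (\nabla v)_{x_0,\sigma}\bigr|^2 \, dx \leq C\Bigl(\tfrac{\sigma}{\rho}\Bigr)^{d+2} \int_{B_\rho(x_0)} \bigl|\nabla v - (\nabla v)_{x_0,\rho}\bigr|^2 \, dx, \qquad 0 < \sigma \leq \rho.
\end{equation*}
For $z$, since the equation is preserved under subtracting any constant matrix from $\widetilde{F}$ (the divergence of a constant vanishes), I would rewrite the right-hand side as $\ptl_j(\widetilde{f}_{ij} - (\widetilde{f}_{ij})_{x_0,\rho})$, test with $z$ itself, and invoke the ellipticity \eqref{elliptic} together with Korn's first inequality (valid on $H^1_0$) to obtain
\begin{equation*}
\int_{B_\rho(x_0)} |\nabla z|^2 \, dx \leq C \int_{B_\rho(x_0)} \bigl|\widetilde{F} - (\widetilde{F})_{x_0,\rho}\bigr|^2 \, dx \leq C \rho^{d+2\gamma} [\widetilde{F}]_{\gamma, B_R}^2.
\end{equation*}

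Combining the two, using $|\nabla \widetilde{w} - (\nabla \widetilde{w})_{x_0,\sigma}|^2 \leq 2|\nabla v - (\nabla v)_{x_0,\sigma}|^2 + 2|\nabla z|^2$ in the ball $B_\sigma$, and controlling $\int_{B_\rho}|\nabla v - (\nabla v)_{x_0,\rho}|^2$ by $\int_{B_\rho}|\nabla \widetilde{w} - (\nabla \widetilde{w})_{x_0,\rho}|^2$ plus $\int_{B_\rho}|\nabla z|^2$, produces the hybrid decay inequality
\begin{equation*}
\Phi(\sigma) \leq C\Bigl[\Bigl(\tfrac{\sigma}{\rho}\Bigr)^{d+2} + \varepsilon\Bigr] \Phi(\rho) + C \rho^{d+2\gamma} [\widetilde{F}]_{\gamma, B_R}^2,
\end{equation*}
where $\Phi(r) := \int_{B_r(x_0)} |\nabla \widetilde{w} - (\nabla \widetilde{w})_{x_0,r}|^2$. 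Here I would apply the standard iteration lemma (Lemma 5.13 in \cite{gm}) with $\alpha = d + 2$ and $\beta = d + 2\gamma < \alpha$ to conclude
\begin{equation*}
\Phi(\sigma) \leq C \sigma^{d+2\gamma}\Bigl( R^{-d-2\gamma}\Phi(R) + [\widetilde{F}]_{\gamma, B_R}^2 \Bigr), \qquad 0 < \sigma \leq R/2.
\end{equation*}

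The final step is to interpret this as membership of $\nabla \widetilde{w}$ in the Campanato space $\mathcal{L}^{2, d+2\gamma}(B_{R/2})$, which by the Campanato--Meyers characterization is isomorphic to $C^{0,\gamma}(B_{R/2})$, and to bound $\|\nabla \widetilde{w}\|_{L^2(B_{R/2})}$ in terms of $\|\nabla\widetilde{w}\|_{L^2(B_R)}$ trivially. I expect the main technical obstacle to be the Korn-type step for $z$: one has to make sure the Dirichlet boundary condition on $\ptl B_\rho$ really does deliver a full $|\nabla z|^2$ bound from $|e(z)|^2$ with a constant independent of $\rho$ (handled by scaling and the standard Korn inequality on $B_1$). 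The rest is bookkeeping in the iteration lemma and verifying that the semi-norm $[\widetilde{F}]_{\gamma, B_R}$ is what actually appears, rather than the full $C^\gamma$ norm, which is exactly what the application in Section 2 requires.
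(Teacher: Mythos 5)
Your proposal is correct, and it is precisely the Campanato decomposition argument by which this result is proved in the cited reference; the paper itself does not prove Theorem \ref{thm-514} but simply quotes it as Theorem 5.14 of \cite{gm}. Two minor points you already half-noticed: with constant coefficients the $\varepsilon$ in your hybrid decay inequality is not needed (take $\varepsilon=0$ and use the basic Lemma 5.13 iteration), and the domain-independence of the Korn constant for $z\in H^1_0(B_\rho;\R^d)$ is automatic, since for compactly supported fields the first Korn inequality is the identity $\int|e(z)|^2=\tfrac12\int|\nabla z|^2+\tfrac12\int(\nabla\cdot z)^2\geq\tfrac12\int|\nabla z|^2$, so no scaling argument is required.
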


In view of the proof of Theorem \ref{thm-514} in \cite{gm} and using the equivalence between the H\"{o}lder space and the Campanato space, we have the following interior estimates.

\begin{corollary}\label{lem-interior-estimate}
Let $\widetilde{w}$ be the solution of \eqref{equ-w-divf-q1}. Then under the hypotheses of Theorem \ref{lem-global-C1alp-estimates},  for $B_R:=B_R(x_0)\subset Q$,
\begin{equation}\label{ineq-interior-estimate}
[\nabla \widetilde{w}]_{\gamma, B_{R/2}}\leq C\left(\frac{1}{R^{1+\gamma}}\|\widetilde{w}\|_{L^\infty( B_R)}+[\widetilde{F}]_{\gamma, B_R}\right),
\end{equation}
where $C=C(d, \gamma)$.
\end{corollary}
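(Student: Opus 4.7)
The plan is to deduce \eqref{ineq-interior-estimate} from Theorem \ref{thm-514}, Campanato's characterization of Hölder spaces, and a Caccioppoli energy inequality, after a scaling reduction to the unit ball. Since only $\partial_j\widetilde F$ enters \eqref{equ-w-divf-q1}, I would first replace $\widetilde F$ by $\widetilde F - \widetilde F(x_0)$ without altering either side of the equation; the new $\widetilde F$ then satisfies $\|\widetilde F\|_{L^\infty(B_R)}\leq R^\gamma [\widetilde F]_{\gamma,B_R}$. Translating $x_0$ to the origin and rescaling via $\widetilde v(y):=\widetilde w(Ry)$ and $\widetilde g_{ij}(y):=R\,\widetilde f_{ij}(Ry)$, the problem transfers to $B_1$ with $[\widetilde G]_{\gamma,B_1}=R^{1+\gamma}[\widetilde F]_{\gamma,B_R}$, $\|\widetilde v\|_{L^\infty(B_1)}=\|\widetilde w\|_{L^\infty(B_R)}$, $[\nabla\widetilde v]_{\gamma,B_{1/2}}=R^{1+\gamma}[\nabla\widetilde w]_{\gamma,B_{R/2}}$, together with the normalization $\widetilde G(0)=0$. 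Hence it suffices to establish the unit-ball bound
\[
[\nabla\widetilde v]_{\gamma,B_{1/2}}\leq C\bigl(\|\widetilde v\|_{L^\infty(B_1)}+[\widetilde G]_{\gamma,B_1}\bigr).
\]

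On $B_1$, Theorem \ref{thm-514} yields $\|\nabla\widetilde v\|_{\mathcal{L}^{2,d+2\gamma}(B_{1/2})}\leq C(\|\nabla\widetilde v\|_{L^2(B_1)}+[\widetilde G]_{\mathcal{L}^{2,d+2\gamma}(B_1)})$. Since $0<\gamma<1$ places $\lambda=d+2\gamma$ in the Campanato regime $(d,d+2)$, Campanato's isomorphism $\mathcal{L}^{2,\lambda}\cong C^{0,\gamma}$ (see, e.g., \cite{gm}) converts this into $[\nabla\widetilde v]_{\gamma,B_{1/2}}\leq C(\|\nabla\widetilde v\|_{L^2(B_1)}+[\widetilde G]_{\gamma,B_1})$. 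To absorb the $L^2$-gradient term, I would apply a Caccioppoli inequality: testing the equation for $\widetilde v$ against $\eta^2\widetilde v$ with a cutoff $\eta\in C_c^\infty(B_1)$ equal to $1$ on $B_{3/4}$, and invoking ellipticity \eqref{elliptic} together with the first Korn inequality, yields $\|\nabla\widetilde v\|_{L^2(B_{3/4})}\leq C(\|\widetilde v\|_{L^\infty(B_1)}+\|\widetilde G\|_{L^\infty(B_1)})$. The normalization $\widetilde G(0)=0$ from the reduction step forces $\|\widetilde G\|_{L^\infty(B_1)}\leq C[\widetilde G]_{\gamma,B_1}$, giving precisely the desired unit-ball estimate modulo the scale mismatch below.

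The main obstacle is this scale mismatch: Caccioppoli naturally produces $\|\nabla\widetilde v\|_{L^2}$ only on a ball strictly smaller than $B_1$, while the Campanato step as stated requires $\|\nabla\widetilde v\|_{L^2(B_1)}$. I would resolve this by re-centering: for each $y_0\in B_{1/2}$ the ball $B_{1/4}(y_0)$ sits inside $B_{3/4}$, and Theorem \ref{thm-514} (in its scaled form) applied on $B_{1/4}(y_0)$ gives
\[
[\nabla\widetilde v]_{\gamma,B_{1/8}(y_0)}\leq C\bigl(\|\nabla\widetilde v\|_{L^2(B_{1/4}(y_0))}+[\widetilde G]_{\gamma,B_{1/4}(y_0)}\bigr),
\]
whose right-hand side is controlled by the Caccioppoli bound on $B_{3/4}$ and the semi-norm on $B_1$. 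A finite covering of $B_{1/2}$ by such balls, combined with a routine triangle-inequality assembly of Hölder semi-norms across overlapping subdomains (handling pairs $x,y\in B_{1/2}$ with $|x-y|$ either smaller or larger than the covering scale), then produces the desired estimate on all of $B_{1/2}$. Unrescaling via the scaling identities of the reduction yields \eqref{ineq-interior-estimate}.
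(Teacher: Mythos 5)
Your argument is correct and follows the route the paper intends: deduce the corollary from Theorem~\ref{thm-514} combined with the Campanato--H\"older equivalence. The paper leaves the details to the reader, and you correctly fill them in — the rescaling to $B_1$ to remove the $R$-dependence of Campanato's constant, the normalization $\widetilde F\mapsto \widetilde F-\widetilde F(x_0)$ using that only $\partial_j\widetilde f_{ij}$ enters the equation, the Caccioppoli inequality (via ellipticity and Korn) to pass from $\|\nabla\widetilde v\|_{L^2}$ to $\|\widetilde v\|_{L^\infty}$, and the re-centering/covering step to reconcile the radii.
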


\begin{proof}[Proof of Theorem \ref{lem-global-C1alp-estimates}]
Since $\Gamma \in C^{1,\gamma}$, then for any $x_0\in \Gamma$, there exists a neighbourhood $U$ of $x_0$ and a homeomorphism $\Psi\in C^{1, \, \gamma} (U)$ such that 
\begin{align*}
&\Psi (U \cap Q )=\mathcal{B}_1^+ = \{y\in \mathcal{B}_1(0) \, : \, y_d>0 \},\\
&\Psi (U\cap\Gamma ) = \partial \mathcal{B}_1^+ \cap \{y\in \R^d \, : \, y_d=0 \},
\end{align*}
where $\mathcal{B}_1(0):=\{y\in \R^d: |y|< 1\}$. Under transformation $y=\Psi (x)=(\Psi^1(x), \cdots,\Psi^d(x))$, we denote
$$\mathcal{W}(y):=\widetilde{w}(\Psi ^{-1}(y)),\quad  \mathcal{J}(y):=\frac{\partial((\Psi^{-1})^1, \, \cdots, \, (\Psi^{-1})^d)}{\partial (y^1, \, \cdots ,\, y^d)},\quad |\mathcal{J}(y)|:=\det \mathcal{J}(y),$$ and
\begin{align*}
\mathcal{A}_{ijkl}(y)&:=C_{i\hat{j}k\hat{l}}|\mathcal{J}(y)|\big(\ptl_{\hat{l}} (\Psi^{-1})^l(y)\big)^{-1}\,\ptl_{\hat{j}}\Psi^j\big(\Psi^{-1}(y)\big),\\
\mathcal{F}_{ij}(y)&:=|\mathcal{J}(y)|\ptl_{\hat{l}}\Psi^j\big(\Psi^{-1}(y)\big)\widetilde{f}_{i\hat{l}}\big(\Psi^{-1}(y)\big).
\end{align*}
Then \eqref{equ-w-divf-q1} becomes
\begin{equation}\label{equ-halfball-y}
-\ptl_j\big(A_{ijkl}(y)\ptl_l\mathcal{W}^{(k)}\big)=\ptl_j\mathcal{F}_{ij}\quad \text{in}\quad B_R^+,
\end{equation}
and $\mathcal{W}=0$ on $\ptl B_R^+\cap\ptl \R^d_+$. Let $y_0= \Psi(x_0)$. Freeze the coefficients and rewrite \eqref{equ-halfball-y} in the form 
\begin{equation*}
-\ptl_j\big(\mathcal{A}_{ijkl}(y_0)\ptl_l\mathcal{W}^{(k)}\big)=\ptl_j\big((\mathcal{A}_{ijkl}(y)-\mathcal{A}_{ijkl}(y_0))\ptl_l\mathcal{W}^{(k)}\big)+\ptl_j\mathcal{F}_{ij}.
\end{equation*}
Then, from the proof of Theorem \ref{thm-514} and the equivalence between the H\"{o}lder space and the Campanato space, we have that for $0<R\leq 1$,
\begin{align*}
\big[\nabla \mathcal{W}\big]_{\gamma,\, \mathcal{B}^+_{R/2}}\leq &C \left(\frac{1}{R^{1+\gamma}}\| \mathcal{W}\|_{L^\infty( \mathcal{B}^+_{R})}+[ \mathcal{F}]_{\gamma,\, \mathcal{B}^+_{R}} \right)\\
&\quad+C\big[(\mathcal{A}_{ijkl}(y)-\mathcal{A}_{ijkl}(y_0))\ptl_l\mathcal{W}^{(k)} \big]_{\gamma, \, \mathcal{B}^+_{R} },
\end{align*}
where $\mathcal{F} :=(\mathcal{F}_{ij})$. Since $\mathcal{A}_{ijkl}(y)\in C^{\gamma}$,
\begin{align*}
\big[(\mathcal{A}_{ijkl}(y)-\mathcal{A}_{ijkl}(y_0))\ptl_l\mathcal{W}^{(k)} \big]_{\gamma, \, \mathcal{B}^+_{R} }
\leq C\left(  R^\gamma  [\nabla \mathcal{W}]_{\gamma,\, \mathcal{B}^+_{R}}+\|\nabla  \mathcal{W}\|_{L^\infty(\mathcal{B}_R^+)}\right).
\end{align*}
By the  interpolation inequality, one has 
\[\|\nabla  \mathcal{W}\|_{L^\infty(\mathcal{B}_R^+)} \leq R^{\gamma }[\nabla \mathcal{W}]_{\gamma, \, \mathcal{B}^+_{R} } + \frac{C}{R}\| \mathcal{W}\|_{L^\infty( \mathcal{B}^+_{R})},\]
where $C=C(n)$. Hence, 
\begin{align}\label{ineq-tilde-W}
\big[\nabla \mathcal{W}\big]_{\gamma,\, \mathcal{B}^+_{R/2}}
\leq C \left(\frac{1}{R^{1+\gamma}}\|\mathcal{W}\|_{L^\infty( \mathcal{B}^+_{R})}+R^\gamma [\nabla \mathcal{W}]_{\gamma, \, \mathcal{B}^+_{R} }+ [\mathcal{F}]_{\gamma,\, \mathcal{B}^+_{R}} \right). 
\end{align}
Since $\Psi$ is a homeomorphism, it follows that the norms in \eqref{ineq-tilde-W} defined on  $\mathcal{B}^+_{R}$ are equivalent to those on $\mathcal{N}=\Psi ^{-1}(\mathcal{B}^+_{R})$, respectively. Thus, changing back to the variable $x$, we obtain
\[\big[\nabla \widetilde{w}\big]_{\gamma,\, \mathcal{N}^\prime} \leq C \left(\frac{1}{R^{1+\gamma}}\|\widetilde{w}\|_{L^\infty(\mathcal{N})}+R^\gamma [\nabla \widetilde{w}]_{\gamma, \, \mathcal{N} }+ [\widetilde{F}]_{\gamma,\, \mathcal{N}} \right),\]
where  $\mathcal{N}^\prime=\Psi^{-1}(\mathcal{B}^+_{R/2})$ and $C=C(n, \gamma, \Psi)$. Furthermore, there exists a constant $0<\sigma<1$, independent on $R$, such that $ B_{\sigma R}(x_0)\cap Q \subset \mathcal{N}^\prime$. 

Therefore, recalling that $\Gamma \subset \ptl Q$ is a boundary portion, for any domain $Q'\subset\subset Q\cup \Gamma$ and for each $x_0\in Q^\prime \cap\Gamma$, there exist  $\mathcal{R}_{0}:=\mathcal{R}_{0}(x_0)$ and $C_0=C_0(n, \gamma, x_0)$ such that
\begin{align}\label{ineq-boundary-estimates}
\big[\nabla \widetilde{w}\big]_{\gamma,\, B_{\mathcal{R}_{0}}(x_0)\cap Q^\prime} 
\leq C_0 \left(\mathcal{R}_{0}^\gamma [\nabla \widetilde{w}]_{\gamma, \, Q^\prime}+\frac{1}{\mathcal{R}_{0}^{1+\gamma}}\|\widetilde{w}\|_{L^\infty( Q)}+ [\widetilde{F}]_{\gamma,\, Q} \right).
\end{align}

Applying the finite covering theorem to the collection of $B_{\mathcal{R}_{0}/2}(x_0)$ for all $x_0\in \Gamma \cap Q^\prime$, there exist finite  $B_{\mathcal{R}_{j}/2}(x_j)$, $j=1, 2, \, \cdots\,, K$, covering  $  \Gamma \cap Q^\prime $. Let $C_{j}$ be the constant in \eqref{ineq-boundary-estimates} corresponding to $x_j$. Set 
\[\widehat{C}:=\max\limits_{1\leq j\leq K}\big\{C_{j}\big\},\quad\widehat{\mathcal{R}}:=\min\limits_{1\leq j\leq K}\big\{\frac{\mathcal{R}_{j}}{2}\big\}.\]
Thus, for any $x_0\in  \Gamma \cap Q^\prime$, there exists $j_0\in \{1, 2, \, \cdots\, ,K \}$ such that
$B_{\widehat{\mathcal{R}}}(x_0)\subset B_{\mathcal{R}_{j_0}}(x_{j_0})$ and
\begin{align}\label{ineq-local-estimates}
\big[\nabla \widetilde{w}\big]_{\gamma,\, B_{\widehat{\mathcal{R}}}(x_0)\cap Q^\prime} 
\leq \widehat{C} \left(\widehat{\mathcal{R}}^\gamma [\nabla \widetilde{w}]_{\gamma, \, Q^\prime }+\frac{1}{\widehat{\mathcal{R}}^{1+\gamma}}\|\widetilde{w}\|_{L^\infty( Q)}+ [\widetilde{F}]_{\gamma,\, Q} \right).
\end{align}

Finally, we give the estimates  on $Q^\prime$. Denote $\widetilde{C}$ be the constant in \eqref{ineq-interior-estimate} of Corollary \ref{lem-interior-estimate}. Let
\[\overline{C}:=\max\{\widehat{C}, \widetilde{C}\}\quad \text{and}\quad \overline{\mathcal{R}}:=\min\{(3\overline{C})^{-1/\gamma}, \widehat{\mathcal{R}}\}.\]
For any $x^1, \, x^2\in Q^\prime $, there are three cases to occur:
\begin{itemize}
\item[(i)\,\,] $|x^1- x^2|\geq \frac{\overline{\mathcal{R}}}{2}$;

\item[(ii)\,] there exists $1\leq j_0\leq K$ such that $x^1, \, x^2\in B_{\overline{\mathcal{R}}/2}(x_{j_0})\cap Q^\prime $;

\item[(iii)]  $x^1, \, x^2\in B_{\overline{\mathcal{R}}/2}\subset
Q^\prime $.
\end{itemize}
For case (i), we have
\[\frac{|\nabla \widetilde{w}(x^1)-\nabla \widetilde{w}(x^2)|}{|x^1- x^2|^{\gamma}}\leq \frac{4}{\overline{\mathcal{R}}^\gamma}\|\nabla \widetilde{w}\|_{\infty, \, Q^\prime }.\]
For case (ii), it follows from (\ref{ineq-local-estimates}) that
\[
\begin{aligned}
\frac{|\nabla \widetilde{w}(x^1)-\nabla \widetilde{w}(x^2)|}{|x^1- x^2|^{\gamma}}& \leq C [\nabla \widetilde{w}]_{\gamma, \, B_{\overline{\mathcal{R}}/2}(x_{j_0})\cap Q^\prime }\leq C[\nabla \widetilde{w}]_{\gamma, \, B_{\overline{\mathcal{R}}}(x_{j_0})\cap Q^\prime }\\
& \leq \overline{C} \left(\overline{\mathcal{R}}^\gamma [\nabla \widetilde{w}]_{\gamma, \, Q^\prime}
+\frac{1}{\overline{\mathcal{R}}^{1+\gamma}}\|\widetilde{w}\|_{L^\infty(Q)}+ [\widetilde{F}]_{\gamma,\, Q} \right).
\end{aligned}
\]
For case (iii), by using Corollary \ref{lem-interior-estimate}, one has
\[
\begin{aligned}
\frac{|\nabla \widetilde{w}(x^1)-\nabla \widetilde{w}(x^2)|}{|x^1- x^2|^{\gamma}}& \leq C [\nabla \widetilde{w}]_{\gamma, \, B_{\overline{\mathcal{R}}/2}}
\leq \overline{C} \left(\frac{1}{\overline{\mathcal{R}}^{1+\gamma}}\|\widetilde{w}\|_{L^\infty( Q)}+ [\widetilde{F}]_{\gamma,\, Q} \right).
\end{aligned}
\]
Hence, in either case, we obtain
\begin{equation*}
[\nabla \widetilde{w}]_{\gamma, \, Q^\prime } \leq \overline{C}\left(\overline{\mathcal{R}}^\gamma [\nabla \widetilde{w}]_{\gamma, \, Q^\prime  }+\frac{1}{\overline{\mathcal{R}}^{1+\gamma}}\|\widetilde{w}\|_{L^\infty( Q)}+[\widetilde{F}]_{\gamma,\, Q}\right)  +  \frac{4}{\overline{\mathcal{R}}^\gamma}\|\nabla \widetilde{w}\|_{L^\infty( Q^\prime )}.
\end{equation*} 
By the interpolation inequality, see e.g. \cite[lemma 6.32]{gt},
\[
\begin{aligned}
\frac{4}{\overline{\mathcal{R}}^\gamma}\|\nabla \widetilde{w}\|_{L^\infty( Q^\prime )} &\leq \frac{1}{3} [\nabla \widetilde{w}]_{\gamma, \, Q^\prime } + \frac{C}{\overline{\mathcal{R}}^{1+\gamma}}\|\widetilde{w}\|_{L^\infty( Q^\prime )}\\
&\leq \frac{1}{3} [\nabla \widetilde{w}]_{\gamma, \, Q^\prime } + \frac{C}{\overline{\mathcal{R}}^{1+\gamma}}\|\widetilde{w}\|_{L^\infty(Q)},
\end{aligned}
\]
where $C=C(n, \, \gamma)$. Since $\overline{\mathcal{R}}\leq (3\overline{C})^{-1/\gamma}$, we get
\begin{equation*}
\big[\nabla \widetilde{w}\big]_{\gamma,\, Q^\prime } \leq C\left( \|\widetilde{w}\|_{L^\infty( Q)}+[\widetilde{F}\,]_{\gamma,\, Q}\right),
\end{equation*}
where $C=C(n, \gamma, Q^\prime, Q)$. By using the interpolation inequality, we obtain \eqref{ineq-global-C1alp-estimates}.
\end{proof}

\subsection{ $W^{1, p}$  estimates } 
\begin{proof}[Proof of Theorem \ref{lem-lp-esti}]
 First, we give the $W^{1, p}$ interior estimates. For any ball $B_R:=B_R(x_0) \subset Q$ with $R \leq 1$, 
since $\widetilde{w}\neq 0$ on $\ptl B_R$, we choose a cut-off function $\eta\in C_0^\infty(B_R)$ such that 
\begin{equation*}
0 \leq \eta \leq 1 \quad \eta= 1 \quad \text{in} ~~B_{\rho}, \quad |\nabla \eta| \leq \frac{C}{R-\rho}.
\end{equation*}
We have $\eta \widetilde{w}$ satisfies
 \begin{equation*}
 \int_{B_R} C_{ijkl}\ptl_l(\eta \widetilde{w}^{(k)}) \ptl_j\varphi^{(i)} \ dx= \int_{B_R} G_i \varphi^{(i)} \ dx + \int_{B_R} \widehat{F}_{ij} \ptl_j \varphi^{(i)} \ dx,\quad \forall ~~\varphi \in C_0^\infty(B_R; \R^d),
 \end{equation*}
where 
\begin{align*}
G_i:=\widetilde{f}_{ij}\ptl_j\eta-C_{ijkl}\ptl_l \widetilde{w}^{(k)} \ptl_j \eta,\quad\text{and}\quad
\widehat{F}_{ij}:= \widetilde{f}_{ij} \eta + C_{ijkl}\widetilde{w}^{(k)}\ptl_l \eta.
\end{align*} 
 Let $v\in H_0^1(B_R; \R^d)$ be the weak solution of 
 \begin{equation}\label{equ-laplas}
 -\laplas v^{(i)}=G_i.
 \end{equation}
 We conclude that $\eta \widetilde{w}$ satisfies 
 \begin{equation*}
 \int_{B_R}C_{ijkl}\ptl_l(\eta \widetilde{w}^{(k)})\ptl_j \varphi^{(i)} \ dx= \int_{B_R} \check{F}_{ij} \ptl_j \varphi^{(i)} \ dx \quad \forall ~~\varphi \in C_0^\infty (B_R; \R^d),
 \end{equation*}
where $\check{F}_{ij}:=\widehat{F}_{ij}+\ptl_j v^{(i)}$.

Since $\widetilde{f}_{ij}k\in C^{\gamma}$, then $\widetilde{f}_{ij}\in L^p(B_R)$ for any $d\leq p<\infty$. We firstly assume that $\widetilde{w}\in W^{1, q}(B_R; \R^d)$, $q\geq 2$. Then we have
\begin{align}
&G_i\in L^{p\wedge q}(B_R),\quad \text{where}\quad p\wedge q:=\min\{p,\,q\},\label{integral-G}\\
&\widehat{F}_{ij}\in L^{p\wedge q^*}(B_R),\quad \text{where}\quad  q^*:=\frac{dq}{d-q}.\label{integral-F}
\end{align}
On the account of \eqref{equ-laplas} and $L^2$ theory, $\nabla^2v \in L^2(B_R)$ and
\begin{equation*}
-\laplas (\ptl_j v^{(i)})=\ptl_j G_i.
\end{equation*}
Then, combining with \eqref{integral-G},  theorem 7.1 in \cite{gm} yields $\nabla (\ptl_j v^{(i)})\in L^{p\wedge q}(B_R)$ and it follows from the Sobolev embedding theorem that $\ptl_j v^{(i)} \in L^{(p\wedge q)^*}(B_R)$. Thus, from \eqref{integral-F}, we have $\check{F}_{ij} \in L^{p\wedge q^*}(B_R)$. Furthermore, using theorem 7.1 in \cite{gm} again, we have 
\begin{align*}
\|\nabla (\eta \widetilde{w})\|_{L^{p\wedge q^*}(B_R)}\leq C\|\check{F}\|_{L^{p\wedge q^*}(B_R)},
\end{align*}
where $C=C(d, \lam, \mu, p, q)$ and $\check{F}:=(\check{F}_{ij})$, $i,\, j=1,\, \cdots, d$. Thus, from \eqref{integral-G}, \eqref{integral-F} and the definition of $G_i$ and $\widehat{F}_{ij}$, one has 
\begin{equation}\label{ineq-p-w-q}
\|\nabla \widetilde{w}\|_{L^{p\wedge q^*}(B_\rho)}\leq \frac{C}{R-\rho}(\|\widetilde{w}\|_{W^{1, q}(B_R)}+\|\widetilde{F}\|_{L^p(B_R)}),
\end{equation}
where $C=C(d, \lam, \mu, p, q)$.

Next, we prove that $\nabla \widetilde{w}\in L^p(B_{R/2})$. Choose a series of balls with radii
\begin{equation*}
\frac{R}{2}<\cdots<R_k<\cdots<R_2<R_1<R.
\end{equation*}
In \eqref{ineq-p-w-q}, we firstly take $\rho=R_1$ and $q=2$, then
\begin{equation*}
\|\nabla \widetilde{w}\|_{L^{p\wedge 2^*}(B_{R_1})}\leq\frac{C}{R-R_1}(\|\widetilde{w}\|_{W^{1, 2}(B_R)}+\|\widetilde{F}\|_{L^p(B_R)}).
\end{equation*}
If $p\leq 2^*$, the proof is completed. If $p>2^*$, then $\nabla \widetilde{w} \in L^{2^*}(B_{R_1})$ and 
\begin{equation}\label{ineq-2*}
\|\nabla \widetilde{w}\|_{L^{ 2^*}(B_{R_1})}\leq\frac{C}{R-R_1}(\|\widetilde{w}\|_{W^{1, 2}(B_R)}+\|\widetilde{F}\|_{L^p(B_R)}).
\end{equation}
By taking $R=R_1$, $\rho=R_2$ and $q=2^*$ in \eqref{ineq-p-w-q} and combining with \eqref{ineq-2*}, one has
\begin{align*}
\|\nabla \widetilde{w}\|_{L^{ p\wedge 2^{**}}(B_{R_2})}&\leq\frac{C}{R_2-R_1}(\|\widetilde{w}\|_{W^{1, 2^*}(B_{R_1})}+\|\widetilde{F}\|_{L^p(B_{R_1})})\\
&\leq \frac{C}{(R-R_1)(R_1-R_2)}(\|\widetilde{w}\|_{W^{1, 2}(B_R)}+\|\widetilde{F}\|_{L^p(B_R)}).
\end{align*}
If $p\leq 2^{**}$, then the proof is completed. If $p>2^{**}$, continuing the above argument within finite steps, one has
$\nabla \widetilde{w}\in L^p(B_{R/2})$ and
\begin{equation}\label{ineq-lp-interior-esti}
\|\nabla \widetilde{w}\|_{L^p(B_{R/2})}\leq C(\|\widetilde{w}\|_{H^{1}(B_R)}+\|\widetilde{F}\|_{L^p(B_R)}),
\end{equation}
where $C=C(d, \lam, \mu, p, \dist(B_R, \ptl Q))$.

Now, we prove the $W^{1, p}$ estimates near boundary $\Gamma$ by using the technology of locally flattening the boundary, which is the same to the proof in Theorem \ref{lem-global-C1alp-estimates}. For simplicity, we use the same notation. Hence, we have that $\mathcal{W}(y):=\widetilde{w}(\Psi ^{-1}(y)) \in H^1(\mathcal{B}_R^+, \R^d)$ satisfies    
\begin{equation*}
\int_{\mathcal{B}_R^+}\mathcal{A}_{ijkl}(y)\ptl_l \mathcal{W}^{(k)}\ptl_j \varphi^{(i)} \ dy=\int_{\mathcal{B}_R^+} \mathcal{F}_{ij}\ptl_j \varphi^{(i)} \ dy,\quad \forall ~~ \varphi \in H_0^1(\mathcal{B}_R^+, \R^d).
\end{equation*}
Following the proof of theorem 7.2 in \cite{gm}, we obtain that for any $d\leq p<\infty$,
\begin{equation*}
\|\nabla \mathcal{W}\|_{L^p(B_{R/2}^+)} \leq C(\|\mathcal{W}\|_{H^1(B_R^+)} + \|\mathcal{F}\|_{L^p(B_R^+)}),
\end{equation*}
 where $C=C(\lam, \mu, p, R, \Psi)$. Changing back to the original variable $x$, we obtain 
 \begin{equation*}
 \|\nabla \widetilde{w}\|_{L^p(\mathcal{N}')}\leq C(\|\widetilde{w}\|_{H^1(\mathcal{N})}+\|\widetilde{F}\|_{L^p(\mathcal{N})}),
 \end{equation*}
 where $\mathcal{N}^\prime=\Psi^{-1}(\mathcal{B}^+_{R/2})$, $\mathcal{N}=\Psi ^{-1}(\mathcal{B}^+_{R})$ and $C=C(\lam, \mu, p, R, \Psi)$. Furthermore, there exists a constant $0<\sigma<1$, independent on $R$, such that $ B_{\sigma R}(x_0)\cap Q \subset \mathcal{N}^\prime$. 
 
 Therefore, for any $x_0 \in Q'\cap \Gamma $, there exists $R_0:=R_0(x_0)>0$ such that 
\begin{equation}\label{ineq-lp-boundary}
\|\nabla \widetilde{w}\|_{L^p(B_{\sigma R_0}(x_0)\cap Q')}\leq C(\|\widetilde{w}\|_{H^1(Q)}+\|\widetilde{F}\|_{L^p(Q)}),
\end{equation}
where $C=C(\lam, \mu, p, x_0, R)$. Combining \eqref{ineq-lp-interior-esti} and \eqref{ineq-lp-boundary} and making use of the finite covering theorem, we obtain that
\begin{equation*}
\|\nabla \widetilde{w} \|_{L^p(Q')}\leq C(\|\widetilde{w}\|_{H^1(Q)}+ \|\widetilde{F}\|_{L^p(Q)}),
\end{equation*}
where $C=C(\lam, \mu, p, Q', Q)$. Thus, \eqref{ineq-w1p} follows from the interpolation inequality. 

In particular, since $\widetilde{w}$ still satisfies \eqref{equ-w-divf-q1} with $\widetilde{F}$ replacing by $\widetilde{F}-\mathcal{M}$,  for any constant matrix $\mathcal{M}=(\mathfrak{a}_{ik})$, $i,\, k=1,\,2,\cdots d$,  then, noticing that $W^{1, p} \hookrightarrow C^{0, \tau}$ for $0<\tau\leq 1-d/p$, \eqref{ineq-c-alp} is proved.
\end{proof}

\noindent
{\bf{\large Acknowledgements.}}
The authors would like to thank Prof. Hongjie Dong and Longjuan Xu for valuable discussions. Y. Chen was partially supported by PSF in China No. 2018M631369. International Postdoctoral Exchange Program No. 20190026 and NSF in China No. 11901036. H.G. Li was partially supported by NSF in China No. 11631002, 11971061, and Beijing NSF No. 1202013.


\end{document}